\newtheorem{defn}{Definition}[section]
\newtheorem{lemma}[defn]{Lemma}
\newtheorem{prop}[defn]{Proposition}
\newtheorem{theo}[defn]{Theorem}
\newtheorem{coro}[defn]{Corollary}
\newtheorem{rk}[defn]{Remark}
\newcommand{\Id}{Id}
\newcommand{\N}{\ensuremath{{\mathbb N}}}
\def\Sc{\rm R}
\def\al{\alpha}
\def\downto{ \searrow }
\def\Ric{\mathop{\rm Ric}\nolimits}
\def\Rm{\mathop{\rm Rm}\nolimits}
\def\tr{\mathop{\rm tr}\nolimits}
\def\vol{\mathop{\rm vol}\nolimits}
\def\eucl{\mathop{\rm eucl}\nolimits}
\def\vol{\mathop{\rm Vol}\nolimits}
\def\inj{\mathop{\rm inj}\nolimits}
\def\div{\mathop{\rm div}\nolimits}
\def\AVR{\mathop{\rm AVR}\nolimits}
\def\Id{\mathop{\rm Id}\nolimits}
\def\limGH{\mathop{\rm limGH}\nolimits}
\def\Li{\mathop{\rm \mathscr{L}}\nolimits}
\def\limGH{\mathop{\rm limGH}\nolimits}
\def\Ric{\mathop{\rm Ric}\nolimits}
\def\Rc{\mathop{\rm Ric}\nolimits}
\def\Rm{\mathop{\rm Rm}\nolimits}
\def\tr{\mathop{\rm tr}\nolimits}
\def\vol{\mathop{\rm vol}\nolimits}
\def\eucl{\mathop{\rm eucl}\nolimits}
\def\vol{\mathop{\rm Vol}\nolimits}
\def\inj{\mathop{\rm inj}\nolimits}
\def\div{\mathop{\rm div}\nolimits}
\def\AVR{\mathop{\rm AVR}\nolimits}
\def\Id{\mathop{\rm Id}\nolimits}
\def\limGH{\mathop{\rm limGH}\nolimits}
\def\Li{\mathop{\rm \mathscr{L}}\nolimits}
\def\limGH{\mathop{\rm limGH}\nolimits}
\def\ep{\varepsilon}
\def\grad{\nabla}
\def\R{{\rm R}}
\def\Xint#1{\mathchoice
{\XXint\displaystyle\textstyle{#1}}%
{\XXint\textstyle\scriptstyle{#1}}%
{\XXint\scriptstyle\scriptscriptstyle{#1}}%
{\XXint\scriptscriptstyle\scriptscriptstyle{#1}}%
\!\int}
\def\XXint#1#2#3{{\setbox0=\hbox{$#1{#2#3}{\int}$ }
\vcenter{\hbox{$#2#3$ }}\kern-.6\wd0}}
\def\dashint{\Xint-}
\def\Ob{\mathcal{T}}
\def\lap{\Delta}
\def\partt{\frac{\partial}{\partial t}}
\def\de{\delta}
\def\al{\alpha}
\def\Xint#1{\mathchoice
{\XXint\displaystyle\textstyle{#1}}%
{\XXint\textstyle\scriptstyle{#1}}%
{\XXint\scriptstyle\scriptscriptstyle{#1}}%
{\XXint\scriptscriptstyle\scriptscriptstyle{#1}}%
\!\int}
\def\XXint#1#2#3{{\setbox0=\hbox{$#1{#2#3}{\int}$ }
\vcenter{\hbox{$#2#3$ }}\kern-.6\wd0}}
\def\dashint{\Xint-}
\author{Alix Deruelle}
\address{Universit\'e Paris-Saclay, CNRS, Laboratoire de math\'ematiques d'Orsay, 91405, Orsay, France}
\email{alix.deruelle@universite-paris-saclay.fr}
\author{Felix Schulze}
\address{Mathematics Institute, Warwick University, Gibbet hill road, Coventry CV4 7AL, UK}
\email{felix.schulze@warwick.ac.uk}
\author{Miles Simon}
\address{Institut f\"ur Analysis und Numerik (IAN), Universit\"at Magdeburg, Universit\"atsplatz~2\\${\ }\ \, $ 39106 Magdeburg, Germany }
\email{msimon@ovgu.de}
\begin{document}

\title{On the Hamilton-Lott conjecture in higher dimensions}

\subjclass[2000]{}

\dedicatory{}

\keywords{}

\begin{abstract}  
We study $n$-dimensional Ricci flows with non-negative Ricci curvature where the curvature is pointwise controlled by the scalar curvature and bounded by $C/t$, starting at metric cones which are Reifenberg outside the tip. We show that any such flow behaves like a self-similar solution up to an exponential error in time.  As an application, we show that smooth $n$-dimensional complete non-compact  Riemannian manifolds which are uniformly PIC1-pinched, with positive asymptotic volume ratio, are Euclidean. This confirms a higher dimensional version of a conjecture of Hamilton and Lott under the assumption of non-collapsing. It also yields a new and more direct proof of the original conjecture  of  Hamilton and Lott in three dimensions.  
\end{abstract}

\maketitle

\tableofcontents

\date{\today}

\section{Introduction}
\subsection{Overview}\label{overview}
In this paper, we consider smooth, complete solutions $(M^n,g(t))_{t\in (0,T)}$ to the Ricci flow defined on smooth, connected manifolds satisfying for $t\in(0,T)$,
\begin{equation}\label{curv-cond-1}
\Rc(g(t)) \geq 0\quad \text{ and }\quad  |\Rm(g(t))| \leq \frac{D_0}{ t},
\end{equation} 
where $D_0$ is a positive 
constant. The curvature conditions \eqref{curv-cond-1} are particularly relevant since they are invariant under parabolic rescaling. Due to \cite{SiTo2} it is known that  \eqref{curv-cond-1} ensures the existence of an initial metric $d_0$ on $M$ (interpreted as a metric space) such that the flow converges back to it in the distance sense (see Section \ref{sec-2-rf}). 

This setting has been shown to occur in many situations,  a  prominent one being that of self-similar solutions (also known as expanding solitons) with non-negative curvature operator coming out of cones with non-negative curvature operator: see for example \cite{Sch-Sim}, \cite{Der-Smo-Pos-Cur-Con}, \cite{SiTo2}, \cite{Bam-Cab-Riv-Wil}. See also \cite{Bamler-Chen} for $4$-dimensional expanding solitons coming out of metric cones with non-negative scalar curvature.

The first result of this paper concerns solutions to Ricci flow satisfying \eqref{curv-cond-1} under the assumption that the scalar curvature controls the whole curvature tensor pointwise, starting from a sufficiently regular metric cone. It quantifies locally (in space) how far such a solution is from being self-similarly expanding. 
\begin{theo}\label{main-I}
Let $(M^n,g(t))_{t\in(0,T)}$ be a smooth, complete, connected Ricci flow 
such that there exists $D_0 \in 
(0,\infty)$ such that on $M\times(0,T)$:
\begin{equation*}
\Ric(g(t))\geq 0,\quad |\Rm(g(t))|_{g(t)}\leq \frac{D_0}{t}.
\end{equation*}
Assume that the pointed limit in the distance sense of $(M^n,d_{g(t)},o)$ as $t$ goes to $0$ is a metric cone $(C(X),d_0,o)$ that is uniformly locally $n$-Reifenberg outside its tip $o$.
Assume further that on $M\times(0,T)$ for some $D_1 \in 
(0,\infty)$,
$$|\Rm(g(t))|_{g(t)}\leq D_1\R_{g(t)},
$$ or, 
$$ C(X) \text{ is smooth away from } o.$$\\[-1ex]
Then there exists a smooth function $u:B_{d_0}(o,4)\times(0,T)\rightarrow \mathbb{R}$ satisfying the following properties:
\begin{enumerate}
\item (Equation) The function $u$ solves 
\begin{equation*}
\frac{\partial}{\partial t} u=\Delta_{g(t)}u-\frac{n}{2}
\end{equation*}
 on $B_{d_0}(o,4)\times(0,T)$. \\[-1ex]
\item (Initial condition) There exists $C \in 
(0,\infty)$ such that on $B_{d_0}(o,3)\times(0,T)$, 
\begin{equation*}
\left|u(\cdot,t)-\frac{d_0(o,\cdot )^2}{4}\right|\leq C\sqrt{t}.
\end{equation*}
\item (Local obstruction to be an expanding gradient Ricci soliton) For each non-negative integer $k$ there exists  $C_k \in 
(0,\infty)$ such that on $B_{d_0}(o,2)\setminus \overline{B_{d_0}(o,1)}\times(0,T)$:
\begin{equation*}
\left|\nabla^{g(t),k}\left(\nabla^{g(t),\,2}u-t\Ric(g(t))-\frac{g(t)}{2}\right)\right|_{g(t)}\leq e^{-{ \frac{C_k}{t}}}.
\end{equation*}
\item \label{grad-bd-main-I}(Gradient bound)
There exists  $C\in (0,\infty)$ such that on $B_{d_0}(o,2)\setminus \overline{B_{d_0}(o,1)}\times(0,T)$,
\begin{equation*}
\left||\nabla^{g(t)}u(t)|^2_{g(t)}-u(t)\right|\leq Ct.
\end{equation*}
\end{enumerate}

\end{theo}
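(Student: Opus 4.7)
The plan is to define $u$ as a shifted solution of a linear heat equation along $g(t)$, then verify (2)--(4) using parabolic estimates combined with the near-self-similar structure of the flow. I would set $v := u + \tfrac{n}{2}t$, so that (1) becomes $\partial_t v = \Delta_{g(t)} v$, and solve this heat equation on $B_{d_0}(o,4)\times(0,T)$ with initial datum $v_0 := d_0(o,\cdot)^2/4$, using Gaussian upper and lower bounds for the heat kernel of $g(t)$ (available under \eqref{curv-cond-1} and nonnegative Ricci). Since $v_0$ is smooth away from $o$ on the Reifenberg/smooth cone and Lipschitz on compact subsets, the heat-kernel representation
\begin{equation*}
v(x,t)-v_0(x) = \int \bigl(v_0(y)-v_0(x)\bigr)\, H^{g(t)}_t(x,y)\, d\vol_{g(t)}(y)
\end{equation*}
gives $|v-v_0|\leq C\sqrt{t}$ on $B_{d_0}(o,3)$; subtracting $\tfrac{n}{2}t$ then yields (2).

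\textbf{Property (3).} Let $T := \nabla^{g(t),2}u - t\Ric(g(t)) - g(t)/2$. A direct computation combining (1) with the Ricci flow equation produces a linear parabolic equation of the schematic form $(\partial_t - \Delta_{g(t)}) T = \Rm \ast T + \nabla T \ast \text{(curvature)}$, with no inhomogeneous term: on any expanding gradient Ricci soliton $T\equiv 0$, and the identity $\nabla^2(r^2/4) = g_c/2$ on $C(X)\setminus\{o\}$ forces $T$ to vanish at $t=0$. Under the Reifenberg/smoothness assumption, Shi estimates yield smooth subconvergence of $g(t)$ to $g_c$ on the annulus, and a first pass gives polynomial-in-$t$ smallness of $T$ with all its derivatives. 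To upgrade this to $e^{-C_k/t}$ I would pass to the self-similar time coordinate $\tau = \log t$ and compare to a local expanding soliton model coming out of $C(X)$: in $\tau$ the linearised evolution of $T$ is essentially autonomous with a spectral gap around the expander, and exponential-in-$|\tau|$ decay translates into the stated $e^{-C_k/t}$ bound in $t$. Higher-order derivatives are controlled by differentiating the equation and applying standard parabolic regularity.

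\textbf{Property (4).} Set $Q := |\nabla u|_{g(t)}^2 - u$. A direct Bochner computation using (1) gives the clean identity
\begin{equation*}
(\partial_t - \Delta_{g(t)}) Q = -2\,|\nabla^{g(t),2}u|^2_{g(t)} + \tfrac{n}{2}.
\end{equation*}
Substituting $\nabla^{g(t),2}u = T + t\Ric(g(t)) + g(t)/2$ and using $|g(t)/2|^2_{g(t)} = n/4$ (which cancels the constant $n/2$), the right-hand side reduces to a combination of $|T|^2$, $\langle T,\Ric\rangle$, $\tr T$, $t^2|\Ric|^2$ and $t\R$, each pointwise bounded on the annulus by \eqref{curv-cond-1} together with (3). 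With $Q(\cdot,t)\to 0$ as $t\downto 0$ (from (2) and $|\nabla(r^2/4)|^2_{g_c} = r^2/4$), a cutoff maximum principle on $B_{d_0}(o,2)\setminus\overline{B_{d_0}(o,1)}$ then delivers $|Q|\leq Ct$.

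\textbf{Main obstacle.} The crux is the exponential rate in (3): polynomial decay of every order is relatively routine from iterated Shi estimates and smooth subconvergence to the cone, but $e^{-C_k/t}$ requires genuine stability input, namely a local uniqueness statement for the expander out of $C(X)$ on the annular region (which is where the pinching or smoothness hypothesis enters in an essential way) together with a spectral-gap analysis of the linearised Ricci flow at that expander in the self-similar time variable. Setting up the rescaling so that $T$ lies in the stable subspace of the linearisation is the heart of the argument; the remaining estimates are, conceptually, standard parabolic inputs.
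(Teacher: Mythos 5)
Your overall scaffolding (solve a linear heat equation, define the obstruction tensor, control a Bochner quantity) matches the paper, but the two steps you yourself flag as the heart of the argument are where your proposal diverges from the paper and where the gaps lie.

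\textbf{Construction of $u$.} You propose solving $\partial_t v=\Delta_{g(t)}v$ directly with singular initial datum $d_0^2/4$ via a heat-kernel representation. This only gives Lipschitz-scale closeness $|u-d_0^2/4|\lesssim\sqrt t$, i.e.\ property (2). But the whole subsequent bootstrap in the paper requires much more at $t=0$: one needs approximants $u_i$ whose \emph{Hessian defect} and \emph{gradient defect} are $L^2$-small, namely $\dashint|\nabla^2 u_i-\tfrac{g}{2}|^2\leq C\varepsilon_i$ and $\dashint\big||\nabla u_i|^2-u_i\big|^2\leq C\varepsilon_i$. These are not consequences of any heat-kernel Gaussian bound; they come from the sharp Cheeger--Jiang--Naber Poisson regularization (Proposition~\ref{prop-ell-reg}, applied on time slices $g(t_i)$, $t_i\searrow 0$), which exploits the cone/Reifenberg structure through local entropy monotonicity. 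The paper then solves Dirichlet problems with these $u_i$ as initial data and passes to the limit. Without that initial $L^2$-smallness of $\Ob(0)$ and $v(0)$, the iteration in Lemmas~\ref{lemma-L2-v-plus} and \ref{theo-dream-T} does not start.

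\textbf{Exponential decay in (3).} This is the serious gap. You propose passing to $\tau=\log t$ and invoking a spectral gap for the linearised flow at a ``local expanding soliton model coming out of $C(X)$''. No such local expander is constructed or known to exist under the Reifenberg hypothesis (indeed producing expanders out of such cones is essentially an open problem), and even granting one, the linearised drift operator on a non-compact annulus has no obvious spectral gap; self-similar rescaling does not preserve $B_{d_0}(o,2)\setminus\overline{B_{d_0}(o,1)}$, so the ODE picture in $\tau$ is not autonomous on your domain. The paper's mechanism is entirely different and much more elementary: (i) one first proves $|\Ob(t)|\leq C_k t^k$ for every $k$ by combining the weighted subsolution inequality of Lemma~\ref{magic-comput} (the quantity $|\Ob|^2/(1+\beta tR)^{1/6}$, which crucially uses $|\Rm|\leq D_1 R$ or the smooth-cone bound $|\Rm|\leq K$ on the annulus) with an $L^2$ bootstrap on shrinking balls and Nash--Moser; (ii) one then writes $|\Ob(t)|^2/(1+\beta tR)^{1/6}$ via Duhamel against the \emph{Dirichlet} heat kernel on a slightly larger annulus, and uses the short-time Gaussian bound $K_D(p,t,y,s)\leq C(t-s)^{-n/2}\exp(-d^2/(C(t-s)))$ of Lee--Tam. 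Because the only contributions come from a boundary collar at fixed positive distance, the Duhamel integral is $\int_0^t e^{-C^{-1}/(t-s)}\,ds$, which is $e^{-C'/t}$. This is a localization effect in space, not a spectral gap in self-similar time. Your route, as stated, cannot produce the $e^{-C_k/t}$ rate.

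\textbf{Property (4).} Your identity $(\partial_t-\Delta)Q=-2|\nabla^2u|^2+\tfrac n2$ for $Q=|\nabla u|^2-u$ is correct, but a maximum principle on the annulus needs boundary control of $Q$ on $\partial(B_{d_0}(o,2)\setminus\overline{B_{d_0}(o,1)})\times(0,T)$, which you do not have. The paper instead works with the corrected quantity $v=|\nabla u|^2-u+t^2R+2t\,\tr\Ob$, for which $(\partial_t-\Delta)v=-2|\Ob|^2\leq 0$ cleanly, and gets the two-sided decay via an $L^2$ estimate for $v_+$ (Lemma~\ref{lemma-L2-v-plus}), a lower bound on the mean of $v$ from \eqref{beautiful-covid19-bis} and the already-established decay of $\Ob$ (Corollary~\ref{coro-mean-value-II}), and a mean-value/Nash--Moser step. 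The $t^2R$ correction is exactly what reappears as the $O(t)$ term in your statement (4), since $t^2R\leq D_0 t$.

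In short: properties (1)--(2) in your sketch are plausible but your construction does not supply the initial Hessian control the rest of the proof needs; property (3) as you argue it relies on an unestablished local expander and spectral-gap input, whereas the paper gets exponential decay from a Dirichlet heat-kernel Gaussian localization after a polynomial bootstrap; and property (4) needs the corrected quantity $v$ rather than the raw $Q$ plus a boundary-less maximum principle.
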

A metric space is $n$-Reifenberg at a point if all tangent cones
 at the point exist and are Euclidean $n$-space. It is called uniformly locally Reifenberg, if every point is $n$-Reifenberg and the convergence to the Euclidean tangent cone is locally uniform.

Note that if $(M,d_0)$ is a metric cone which  is  obtained as the limit of smooth manifolds with bounded, non-negative curvature operator then it is known that a Ricci flow solution satisfying the conditions of the theorem exists, and is in fact an expanding gradient soliton:  see \cite{Sch-Sim}, \cite{Der-Smo-Pos-Cur-Con}.

 In dimension $3$ non-negative Ricci-curvature implies 
that the norm of the full curvature tensor is (up to a multiplicative universal constant) bounded by the scalar curvature.
Hence the assumption $|\Rm(g(t))|_{g(t)}\leq D_1\Sc_{g(t)}$ is always satisfied in three dimensions.

 In higher dimensions,   the same is true for Riemannian manifolds  which have  non-negative Ricci curvature
 and  non-negative isotropic  curvature: see for instance Appendix \ref{sec-curv-constraints} and the references therein. In particular, if $(M^n,g(t))_{t\in(0,T)}$ is weakly PIC1 then $(M^n,g(t))_{t\in(0,T)}$ satisfies both assumptions, i.e.~$\Ric(g(t))\geq 0$ and $|\Rm(g(t))|_{g(t)}\leq D_1\R_{g(t)}$. Weakly PIC1 is in turn implied by either $2$-non-negative curvature operator or weakly PIC2: see \cite{Top-Sur} for a survey on these curvature conditions.

We recall that the Hamilton-Lott conjecture states that a complete, connected 3-dimensional Riemannian manifold which is uniformly Ricci pinched is either compact or flat, see \cite[Conjecture 3.39]{Chow-Lu-Ni} and \cite[Conjecture 1.1]{Lott-Ricci-pinched} (with the additional assumption of bounded curvature). The second main result of this paper is motivated by the recent resolution of this conjecture by the authors \cite{DSS-II} and Lee-Topping \cite{Lee-Top-3d}. See also \cite{Hui-Koe} for a proof using inverse mean curvature flow. Recall that a Riemannian manifold $(M^n,g)$ is Ricci-pinched if $\Ric(g)\geq 0$ and if there exists a positive constant $c$ such that $\Ric(g)\geq c\,\R_g \,g$ in the sense of symmetric $2$-tensors. In \cite[Questions $1.5$]{DSS-II}, we asked whether such a conjecture holds in higher dimensions when the metric is not only Ricci-pinched but also $2$-pinched i.e.~if there exists a constant $c>0$ such that the sum of the two lowest eigenvalues $\lambda_i(g)$, $i=1,2,$ of the curvature operator satisfies $\lambda_1(g)+\lambda_2(g)\geq c\,\R_g$ on $M$. 

It is however legitimate to ask the same question either for Ricci-pinched manifolds in all dimensions or under other natural pinching conditions. Our main tool in \cite{DSS-II} and in this paper being the Ricci flow, it is natural to ask for a curvature condition that is preserved along the flow: note that non-negative Ricci curvature, respectively being Ricci pinched,   is not preserved in dimensions higher than $3$ in general.  Both facts can be seen in \cite{Maximo}, 
 where a smooth solution to Ricci flow
on a closed manifold is constructed which has strictly positive Ricci curvature everywhere  at time zero, but has negative Ricci curvature in some directions at some points at later times, see
 \cite[Theorem 1]{Maximo}.

A crucial intermediate result that can be seen as a dynamical version of Hamilton-Lott's conjecture is:
 \begin{theo}\label{main-III}
Let $(M^n,g(t))_{t>0}$ be a smooth, complete, connected  Ricci flow 
such that there exist uniform positive constants $D_0$ and $D_1$ such that on $M\times\mathbb{R}_+$:
\begin{equation*}
\Ric(g(t))\geq 0,\quad |\Rm(g(t))|_{g(t)}\leq \frac{D_0}{t},\quad |\Rm(g(t))|_{g(t)}\leq D_1\R_{g(t)}.
\end{equation*}Assume $(M^n,g(t))_{t>0}$ is uniformly Ricci-pinched and uniformly non-collapsed at all scales, i.e.~there exists $c>0$ such that for $t>0$, $\Ric(g(t))\geq c\,\R_{g(t)}\,g(t)$ on $M$ and there exists $V_0>0$ such that for \textit{some} $t>0$ and all $r>0$, $\vol_{g(t)}\mathop{B}_{g(t)}(p,r)\geq V_0r^n$. Then $(M^n,g(t))_{t>0}$ is isometric to Euclidean space.
 \end{theo}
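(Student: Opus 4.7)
The approach is a blow-down argument coupled with Theorem \ref{main-I}. The rescaled flows $g_\lambda(t) := \lambda^{-2}g(\lambda^2 t)$ preserve all four scale-invariant assumptions (non-negative Ricci, $|\Rm|\leq D_0/t$, $|\Rm|\leq D_1\R$, and $\Ric \geq c\R g$). The non-collapsing at some time $t_*$ and all scales combined with Bishop-Gromov yields $\AVR(g(t_*)) \geq V_0 > 0$. As $\lambda_i \to \infty$, by Hamilton-Cheeger-Gromov compactness (using the $1/t$-curvature bound and the positivity of $\AVR$), a subsequence of $(M, g_{\lambda_i}(t), p)$ converges smoothly on compact subsets of $M_\infty \times (0,\infty)$ to a complete limit Ricci flow $(M_\infty, g_\infty(t), p_\infty)$ which inherits all the assumptions. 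By \cite{SiTo2} combined with Cheeger-Colding, the initial distance limit $d_0$ of $g_\infty$ as $t \downarrow 0$ is the tangent cone at infinity $(C(X), d_0, o)$ of $(M, g(t_*))$, a metric cone with positive asymptotic volume ratio that is uniformly locally $n$-Reifenberg outside its tip.

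Theorem \ref{main-I} now applies to $(M_\infty, g_\infty(t))$, producing a smooth potential $u$ on a parabolic neighborhood of the tip. Writing the approximate soliton identity as $\nabla^2 u = t\Ric + g/2 + \En$ on the annulus $A := B_{d_0}(o, 2) \setminus \overline{B_{d_0}(o, 1)}$, where $|\En|, |\nabla \En| \leq e^{-C/t}$ by item (3), and taking its divergence while using the second Bianchi identity and the heat equation $\Delta u = \partial_t u + n/2$ (whose trace gives $\partial_t u = t\R + \tr \En$, hence $\nabla \Delta u = t\nabla \R + O(e^{-C/t})$), yields the quantitative expander-type identity $2\Ric(\nabla u, \cdot) = -t\nabla \R + O(e^{-C/t})$ on $A$. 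Contracting with $\nabla u$ and invoking the Ricci pinching $\Ric(\nabla u, \nabla u) \geq c\R |\nabla u|^2$ produces on $A \times (0,T)$ the differential inequality
\begin{equation*}
\nabla_{\nabla u} \R \;\leq\; -\frac{2c}{t}\, \R\, |\nabla u|^2 \;+\; \frac{C}{t}\, e^{-C/t}.
\end{equation*}
By item (4), $|\nabla u|^2 = u + O(t) \sim d_0^2/4 \in [1/4, 1]$ on $A$, so $\nabla u$-flow lines traverse the annulus in bounded arc length. Integrating the inequality from $\{d_0 = 1\}$ to $\{d_0 = 2\}$ along such a flow line and using the uniform bound $\R \leq n(n-1) D_0/t$ then forces $\R(\cdot, t) \to 0$ uniformly on the outer boundary region of $A$ as $t \downarrow 0$.

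By smooth convergence of $g_\infty(t)$ to $g_{C(X)}$ on the regular part of the cone (a consequence of the Reifenberg regularity combined with standard Ricci flow smoothing under $|\Rm|\leq D_1 \R$), the scalar curvature of $C(X)$ vanishes on an open subset of its smooth locus. The strong parabolic maximum principle applied to $\R \geq 0$ satisfying $(\partial_t - \Delta)\R = 2|\Ric|^2 \geq 0$ then extends this to $\R \equiv 0$ on $M_\infty \times (0,\infty)$. Combined with $|\Rm|\leq D_1\R$, the limit flow is flat; completeness and the non-collapsing inherited in the limit force $(M_\infty, g_\infty(t))$ to be isometric to $\mathbb{R}^n$. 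Hence the tangent cone at infinity of $(M, g(t_*))$ is $\mathbb{R}^n$, so $\AVR(g(t_*)) = \omega_n$, and Colding's volume rigidity gives $(M, g(t_*)) \cong \mathbb{R}^n$. Uniqueness of Ricci flow from Euclidean initial data with the given curvature bounds yields $(M, g(t)) \cong \mathbb{R}^n$ for all $t > 0$.

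The main technical obstacle is the rigorous derivation of the differential inequality on $A$ via the divergence/Bianchi computation, which requires the first-derivative bound on $\En$ from item (3) of Theorem \ref{main-I} and careful bookkeeping of the error terms through the identity $\nabla\Delta u = t\nabla \R + O(e^{-C/t})$. A secondary delicate point is justifying the smooth convergence of $g_\infty(t)$ to $g_{C(X)}$ on the regular part of the cone in a sense strong enough to pass scalar-curvature vanishing through the singular limit $t \downarrow 0$, which uses the Reifenberg structure combined with the pinching $|\Rm|\leq D_1 \R$ to obtain interior smoothing estimates independent of $t$.
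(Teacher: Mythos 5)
Your proposal follows the paper's overall plan exactly: blow down, pass to a limit flow coming out of a metric cone via Hamilton--Cheeger--Gromov compactness, invoke Theorem~\ref{main-I} to produce the approximate potential $u$, and then use the Bianchi-type identity \eqref{beautiful-covid19} contracted with $\nabla u$ together with Ricci pinching. The conclusion (flat cone $\Rightarrow$ $\AVR=\omega_n$ $\Rightarrow$ Bishop--Gromov rigidity) is also the paper's. However, the middle step --- deducing flatness from the identity and pinching --- is carried out differently and has a genuine gap.

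The paper's route (Theorem~\ref{main-iii}) is a bootstrap in the curvature blow-up rate. From $\R_{g(t)}|\nabla u|^2_{g(t)}\leq C_k t^k + c\,|\nabla^{g(t)}(t\R_{g(t)})|_{g(t)}$, the lower bound $|\nabla u|^2\geq a>0$ on $A$, and Shi's estimate $|\nabla(t\R)|\leq Ct^{-1/2}$, one improves $|\Rm(g(t))|\leq D_0/t$ to $|\Rm(g(t))|\leq c(n)\R\leq Ct^{-1/2}$; plugging the improved bound back into Shi's estimates and iterating gives $|\Rm(g(t))|\leq C_kt^k$ on $A$ for every $k$, whence the cone is smooth and flat. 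Your route instead integrates $\nabla_{\nabla u}\R\leq -\tfrac{2c}{t}\R|\nabla u|^2+\tfrac{C}{t}e^{-C/t}$ along flow lines of $\nabla u$, then invokes the strong parabolic maximum principle. There are two problems. First, the ODE integration yields $\R(\cdot,t)\to 0$ only near the outer boundary of $A$, and only in the limit $t\downarrow 0$; you never obtain $\R(x_0,t_0)=0$ at a fixed interior parabolic point $(x_0,t_0)$ with $t_0>0$. Since $\R$ is merely a supersolution of the heat equation ($(\partial_t-\Delta)\R = 2|\Ric|^2\geq 0$), a supersolution that tends to $0$ as $t\downarrow 0$ need not be identically zero (consider $\R(x,t)=t$), so the strong maximum principle does not apply. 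Second, the intermediate claim that $g_\infty(t)$ converges smoothly to $g_{C(X)}$ on the regular part of the cone, which you use to transfer scalar-curvature vanishing to the cone, is precisely what must be established, and the Reifenberg condition alone (a metric-space condition) does not give it; the paper's iterative curvature improvement is what produces the smooth convergence and flatness simultaneously. So the conclusion is correct and the framework is right, but the step deducing flatness needs to be replaced by the paper's iterated curvature bootstrap (or some equivalent rigorous mechanism), rather than the flow-line integration plus maximum principle argument.
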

 
Theorem \ref{main-III} answers affirmatively \cite[Questions $1.6$]{DSS-II} and (as detailed above) the assumptions on the curvature are implied by the curvature conditions ''weakly PIC1 and uniformly Ricci-pinched along the Ricci flow'' or "uniformly PIC1 pinched along the Ricci flow". For a formal definition of the notion PIC1 pinched, see \cite{Top-Sur} for instance.

Thanks to Theorem \ref{main-III}, we are able to answer \cite[Questions $1.5$]{DSS-II} under an additional non-collapsing assumption. We note that $2$-positive curvature operator implies PIC1 and $2$-pinched implies PIC1 pinched. The following result also confirms the question in  \cite[Remark 1.4]{Lee-Top-PIC2} under an additional non-collapsing assumption.

\begin{theo}\label{main-II}
Let $(M^n,g)$ be a smooth, complete, connected  Riemannian manifold that is PIC1 pinched. Assume it is non-collapsed at all scales: $\AVR(g):=\lim_{r\rightarrow +\infty}r^{-n}\vol_gB_g(p,r)>0$. Then $(M^n,g)$ is isometric to Euclidean space.
\end{theo}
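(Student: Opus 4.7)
The plan is to evolve the initial datum $(M^n,g)$ by Ricci flow and reduce the statement to Theorem \ref{main-III} applied to the resulting flow. The quantitative hypotheses of Theorem \ref{main-III}---namely $\Ric\ge 0$, $|\Rm|\le D_0/t$, $|\Rm|\le D_1\R$, uniform Ricci pinching, and non-collapsing at some positive time---should all arise naturally from a PIC1 pinched initial condition with positive asymptotic volume ratio, provided one can construct a suitable global solution.

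The first step is to produce a smooth complete Ricci flow $(g(t))_{t>0}$ on $M$ with $g(t)\to g$ as $t\downarrow 0$, both smoothly on compact subsets and in the distance sense globally. Since $g$ is smooth and PIC1 pinched, the full curvature is pointwise controlled by the scalar curvature, and positivity of $\AVR(g)$ provides the macroscopic non-collapsing required to run a Perelman-type pseudolocality argument; this yields a uniform bound $|\Rm(g(t))|\le D_0/t$ and, combined with $\AVR(g)>0$, extends the flow to all $t>0$ without finite-time singularities. Simultaneously, PIC1 pinching is preserved along the flow via invariance of the relevant pinching cone (in the spirit of Brendle, Nguyen, Wilking and subsequent refinements), so that for every $t>0$ one has both $\Ric(g(t))\ge c\,\R_{g(t)}\,g(t)$ and $|\Rm(g(t))|_{g(t)}\le D_1\,\R_{g(t)}$, and in particular the flow is uniformly Ricci pinched and has its full curvature controlled by its scalar curvature.

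To complete the hypotheses of Theorem \ref{main-III}, I would transfer the non-collapsing to a positive time: since $\Ric(g)\ge 0$, Bishop-Gromov gives $\vol_g B_g(p,r)\ge \AVR(g)\,r^n$ for every $r>0$, and standard distance/volume comparison under $|\Rm(g(t))|\le D_0/t$ then yields $\vol_{g(t)} B_{g(t)}(p,r)\ge V_0\,r^n$ for every $r>0$ at some small $t>0$. Theorem \ref{main-III} then applies and forces $(M,g(t))$ to be isometric to Euclidean space. Since Euclidean space is stationary under Ricci flow and $g(t)\to g$ as $t\downarrow 0$, uniqueness (or simply continuity in the distance sense) makes $(M,g)$ itself isometric to Euclidean space.

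The main technical obstacle lies in the first step: constructing the Ricci flow globally in time with the quantitative bound $|\Rm|\le D_0/t$ starting from an initial datum whose curvature is only known to be pointwise controlled by the scalar curvature, and a priori not uniformly bounded. Positivity of the asymptotic volume ratio is essential here, both to drive the pseudolocality-based curvature estimate and to guarantee, together with the preservation of PIC1 pinching and Bishop-Gromov, that no dimensional reduction can occur at infinity.
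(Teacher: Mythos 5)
Your overall strategy matches the paper's: produce a Ricci flow starting from $g$ that retains uniform PIC1 pinching and a $D_0/t$ curvature bound, check that it is non-collapsed for positive times, and then invoke Theorem \ref{main-III}. That is exactly the structure of the argument in the paper.

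However, there is a real issue in how you propose to construct the flow, and it is worth flagging because you identify it yourself as "the main technical obstacle." You suggest running a Perelman-type pseudolocality argument driven by the positivity of $\AVR(g)$, and you assert that "positivity of the asymptotic volume ratio is essential here." This is not how the existence is obtained, and the claim that $\AVR>0$ is essential for existence is incorrect. The paper simply cites Theorem \ref{thm:Lee-Top-pinching-exist} (Lee--Topping, \cite[Theorem 1.3]{Lee-Top-PIC2}), which produces a complete immortal solution with $|\Rm(g(t))|\le a/t$ from any complete PIC1-pinched initial metric, with \emph{no} non-collapsing hypothesis whatsoever; uniform PIC1 pinching is preserved for $t>0$. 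The non-collapsing assumption $\AVR(g)>0$ plays no role in the construction of the flow; it is used only afterwards, via Proposition \ref{prop:RF_basics}(3), to show that $\AVR(g(t))=\AVR(g)>0$ for all $t>0$, so that Theorem \ref{main-III} applies. Your sketch of that last step also understates what is going on: the conservation of $\AVR$ along these flows is a genuine result (due to Schulze--Simon and Yokota and recorded in Proposition \ref{prop:RF_basics}), not merely a "standard distance/volume comparison." Once these two points are repaired—cite the Lee--Topping existence theorem rather than trying to derive existence from pseudolocality plus $\AVR$, and cite the conservation of $\AVR$—your argument coincides with the paper's.

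Finally, a small remark on the closing step: once Theorem \ref{main-III} gives that $(M,g(t))_{t>0}$ is isometric to Euclidean space for all $t>0$, one can conclude directly for $g=g(0)$ because $\AVR(g(0))=\AVR(g(t))=\omega_n$ and the rigidity case of Bishop--Gromov applies at time $0$; one does not actually need a backwards uniqueness or stationarity argument, though your version is also fine.
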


Theorem \ref{main-II} provides a new proof of the Hamilton-Lott conjecture in dimension $3$, as detailed below. The starting point of the proof in \cite{DSS-II} are the following existence (E) and non-collapsing (NC) results of \cite{Lott-Ricci-pinched} for starting metrics $(M^3,g_0)$ which are non-flat, complete, connected with non-negative Ricci curvature and bounded curvature: \\[-1ex]
\begin{enumerate}
 \item  [(E)]there exists a smooth solution $(M^3,g(t))_{t\in [0,\infty)}$ to Ricci flow for all time and the solution remains uniformly  Ricci  pinched,  $\Ric(g(t)) \geq \al\,  \R_{g(t)} \,g(t) >0 $  for  some $\al>0,$ and $|\Rm(g(t))|_{g(t)}\leq c/t$ for $t\in (0,\infty).$\\[-1ex]
\item [(NC)] the solution is non-collapsed at all scales uniformly in time. More precisely, it has constant positive asymptotic volume ratio: $\AVR(g(t))= V_0>0$
for all $t \in [0,\infty)$.   \\[-1ex]
\end{enumerate}
Important ingredients in the  proof of \cite{DSS-II} are  a local-in-time stability theorem for the Ricci flow  (see \cite[Theorem $1.2$]{DSS-II}), existence results  for  self-similar solutions coming out of non-negatively curved $3$-dimensional Alexandrov metric cones  and a number of non-trivial results from the theory of RCD spaces. The proof of Theorem \ref{main-II} in this paper, which relies both on Theorems \ref{main-I} and \ref{main-III},  does not require any of these ingredients. 

Assuming the initial metric to be Ricci-pinched for $n=3$ or PIC1 pinched for $n\geq 4$ (for $n=3$, PIC1 pinching is equivalent to Ricci pinching), the existence part (E) was extended by Lee-Topping, allowing the initial metric to have unbounded curvature. 

\begin{theo}[$\text{\cite[Theorem 1.2]{Lee-Top-3d}, \cite[Theorem 1.3]{Lee-Top-PIC2}}$] \label{thm:Lee-Top-pinching-exist}
For $n\geq 3$ suppose $(M^n,g_0)$ is a complete non-compact manifold such that
\begin{itemize}
 \item[(i)] for $n=3$ the metric $g_0$ is uniformly Ricci-pinched,
 \item[(ii)] for $n\geq 4$ the metric $g_0$ is uniformly PIC1 pinched.
\end{itemize}
 Then there exists $a > 0$ (depending on the quantitative pinching in (i) resp.~(ii)) and a smooth complete Ricci flow solution $g(t)$ on $M\times [0,\infty)$ with $g(0) = g_0$ and which satisfies $|\Rm(g(t))|_{g(t)}\leq a/t$. Furthermore, 
 \begin{itemize}
 \item[(i)] for $n=3$ the metrics $g(t)$ remain uniformly Ricci-pinched for all $t>0$,
 \item[(ii)] for $n\geq 4$ the metrics $g(t)$ remain uniformly PIC1 pinched for all $t>0$.\footnote{The uniform pinching constant here might be worse for $t>0$.}
\end{itemize}
\end{theo}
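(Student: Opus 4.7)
The plan is to construct the flow via a local, exhaustion-based short-time existence, propagate the pinching condition via the tensor maximum principle, and then close the existence interval to $[0,\infty)$ using a scale-invariant curvature estimate obtained by a blow-up/rigidity argument.

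For short-time existence with possibly unbounded curvature initial data, I would approximate $g_0$ by complete metrics $g_0^k$ that agree with $g_0$ on $B_{g_0}(o,R_k)$ and are modified to have bounded curvature outside, while preserving $\Ric\geq 0$ (or invoke Hochard's pyramid Ricci flow). Running Shi's short-time existence for each, I would extract an interior smooth limit via Hamilton's compactness, using local Shi estimates and the Bishop--Gromov non-collapsing implied by $\Ric(g_0)\geq 0$. This produces a smooth complete Ricci flow $g(t)$ on a maximal interval $[0,T_{\max})$ with $g(0)=g_0$.

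Next I would propagate the pinching. Both the $3$-dimensional Ricci-pinched cone and the PIC1-pinched cone in dimension $n\geq 4$ are closed, convex, $O(n)$-invariant and ODE-invariant under Hamilton's ODE (Brendle, see Appendix \ref{sec-curv-constraints}). Hamilton's tensor maximum principle on non-compact flows, valid since $|\Rm(g(t))|$ is bounded on compact subintervals by construction, then preserves the pinching, possibly with a slightly weakened constant in the PIC1 case.

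The core step is the bound $|\Rm(g(t))|_{g(t)}\leq a/t$ on $M\times(0,T_{\max})$. I would argue by contradiction using Perelman-type point-picking: if the bound fails, select $(x_k,t_k)$ approximately maximising $t|\Rm|$ at shrinking parabolic scales and rescale by $Q_k:=|\Rm|(x_k,t_k)\to\infty$. The pinching supplies $|\Rm|\leq D_1\R$, so bounds on $|\Rm|$ propagate via Shi estimates and the point-picking construction; $\Ric\geq 0$ together with Bishop--Gromov yields non-collapsing at all scales. Hamilton's compactness extracts a complete, non-flat, ancient Ricci flow $(M_\infty,g_\infty(t))$ still satisfying the pinching. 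Invoking the classification of such ancient solutions---Brendle's rigidity for PIC1-pinched ancient flows for $n\geq 4$ and the analogue in dimension three (a Ricci-pinched complete non-compact ancient flow is flat)---forces $(M_\infty,g_\infty)$ to be flat Euclidean, contradicting $|\Rm(g_\infty)(x_\infty,0)|=1$. Once $|\Rm(g(t))|\leq a/t$ is established, the standard continuation principle forces $T_{\max}=\infty$.

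The hardest part is this final estimate. Compactness alone only yields an ancient limit; the real work lies in the classification of PIC1-pinched ancient Ricci flows that excludes non-trivial limits, and in maintaining enough geometric control on the rescaled sequence despite the initial metric being controlled only through its pinching, with no a priori curvature bound.
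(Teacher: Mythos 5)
This theorem is cited from Lee--Topping (\cite{Lee-Top-3d}, \cite{Lee-Top-PIC2}); the paper does not prove it, so there is no in-paper argument to compare against. Evaluated on its own merits, your sketch has a genuine gap in the central step, the scale-invariant bound $|\Rm(g(t))|_{g(t)}\leq a/t$. Your blow-up to a non-flat ancient solution requires non-collapsing at the rescaled scales for Hamilton's compactness theorem to produce a smooth limit, but Theorem~\ref{thm:Lee-Top-pinching-exist} makes no non-collapsing assumption. Bishop--Gromov applied to $\Ric\geq 0$ gives only monotonicity of the volume ratio, not a positive lower bound, and the pinching alone does not a priori prevent collapse; in the PIC1 case ($n\geq 4$) non-collapsing is precisely the missing ingredient this paper identifies as an open conjecture (see the discussion around Theorem~\ref{thm:Lee-Top-PIC2} and Theorem~\ref{main-II}). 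Even granting non-collapsing, the rigidity you invoke --- that a complete non-compact PIC1-pinched (or, in dimension three, Ricci-pinched) ancient flow must be flat --- is essentially the Hamilton--Lott-type conclusion one is trying to reach; without the stronger PIC2 hypothesis there is no independent classification of PIC1-pinched ancient solutions to feed into the contradiction, so the reasoning is circular. Lee--Topping's actual proof is local and never passes through ancient solutions: it builds the flow by a pyramid/partial Ricci flow exhaustion in the tradition of Hochard and Simon--Topping, and obtains the $a/t$ estimate together with preservation of the pinching via local curvature and a priori estimates that require neither a global tensor maximum principle, nor non-collapsing, nor any blow-up classification.
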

 
We emphasize that the existence of a complete Ricci flow solution starting from a non-compact Riemannian manifold with unbounded curvature (under suitable geometric conditions) is a fundamental open problem in the field, with potentially far-reaching applications. Since the proof of the non-collapsing condition (NC) due to Lott carries over to the setting of Theorem \ref{thm:Lee-Top-pinching-exist}
 for $n=3$, Lee-Topping were able to extend Lott's existence (E) and non-collapsing (NC) result to allow for unbounded curvature initially. Combining this with our previous proof of the Hamilton-Lott conjecture (see \cite[Theorem 1.3]{DSS-II}), this yields the Hamilton-Lott conjecture without the assumption of bounded curvature (see \cite[Theorem 1.1]{Lee-Top-3d}). Similarly, combining the existence result of Lee-Topping (Theorem \ref{thm:Lee-Top-pinching-exist}) and Lott's proof of the non-collapsing condition (NC) with 
Theorem \ref{main-II} yields an alternative proof of the Hamilton-Lott conjecture in three dimensions (without the assumption of bounded curvature), which does not directly use the theory of Alexandrov spaces and  RCD spaces.

For $n\geq 4$, under the additional assumption that the initial metric is weakly PIC2 (this is equivalent to non-negative complex sectional curvature, which is preserved under the flow)   Lee-Topping proved  the following pinching theorem. 

\begin{theo}[$\text{\cite[Theorem 1.2]{Lee-Top-PIC2}}$] \label{thm:Lee-Top-PIC2} Suppose $(M^n, g_0)$ is a complete manifold of non-negative complex sectional curvature with $n\geq 3$ that is uniformly PIC1 pinched. Then $(M,g_0)$ is either flat or compact.
\end{theo}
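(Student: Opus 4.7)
The plan is to reduce Theorem \ref{thm:Lee-Top-PIC2} to Theorem \ref{main-II} of the present paper via a Ricci-flow argument, in the spirit of the new proof of the Hamilton-Lott conjecture described above. If $M$ is compact there is nothing to prove, so assume $(M^n, g_0)$ is non-compact; the aim is to show $g_0$ is flat. Apply Theorem \ref{thm:Lee-Top-pinching-exist} to produce an immortal smooth Ricci flow $(g(t))_{t \in [0, \infty)}$ with $g(0) = g_0$, uniformly PIC1 pinched and satisfying $|\Rm(g(t))|_{g(t)} \leq a/t$. Weakly PIC2 is preserved along the Ricci flow by the Brendle-Schoen maximum principle on the bundle of algebraic curvature operators, so $g(t)$ remains weakly PIC2, in particular non-negatively sectionally curved, for all $t \geq 0$. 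By the curvature estimates recalled in the Appendix, uniform PIC1 pinching furnishes both $|\Rm(g(t))|_{g(t)} \leq D_1 \R_{g(t)}$ and the Ricci pinching $\Ric(g(t)) \geq c \R_{g(t)} g(t)$, so $(g(t))$ verifies all the curvature hypotheses of Theorem \ref{main-III} save possibly uniform non-collapsing at all scales.

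The crux is therefore to establish $\vol_{g(t_0)} B_{g(t_0)}(p, r) \geq V_0 r^n$ for all $r > 0$ at some $t_0 > 0$, equivalently $\AVR(g(t_0)) > 0$. Once this is in hand, Theorem \ref{main-II} applied to the PIC1-pinched slice $(M, g(t_0))$ identifies it with Euclidean space; Kotschwar's backward uniqueness of the Ricci flow then propagates the identification back to $t = 0$, forcing $g_0$ to be flat and finishing the proof.

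The non-collapsing claim is the main obstacle, and this is where the PIC2 hypothesis must play an essential role, since the $2$-dimensional paraboloid is uniformly Ricci-pinched, non-flat, non-compact, yet has $\AVR = 0$. My proposed strategy is a blow-down at infinity: for $\lambda_k \to \infty$, extract a tangent cone $(C_\infty, d_\infty, o_\infty)$ of $(M, \lambda_k^{-2} g(t_0), p)$, which is a metric cone by Cheeger-Colding since $\Ric(g(t_0)) \geq 0$. Scale invariance of the PIC2 and PIC1-pinching conditions, combined with the uniform bound $|\Rm(g(t_0))| \leq a/t_0$ transferred to the rescaled smooth models on annuli, should propagate enough regularity to $C_\infty$ to place it in the Reifenberg regime of Theorem \ref{main-I}. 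The self-similar expanding flow emerging from $C_\infty$, produced in \cite{Sch-Sim} and \cite{Der-Smo-Pos-Cur-Con}, inherits weak PIC2 and uniform PIC1 pinching; applying Theorem \ref{main-I} together with the soliton identity in its conclusion (4) should force $C_\infty$ to be isometric to $\mathbb{R}^n$, i.e.\ $\AVR(g(t_0)) > 0$. The real work lies in establishing the regularity of $C_\infty$ required to invoke the expanding-soliton existence results and in controlling how PIC2 descends through the blow-down.
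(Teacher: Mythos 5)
Your overall reduction scheme—run the flow from Theorem~\ref{thm:Lee-Top-pinching-exist}, establish $\AVR(g(t_0))>0$, then invoke Theorem~\ref{main-II}—is exactly the structure the paper has in mind when it remarks that Theorem~\ref{main-II} ``extends'' Theorem~\ref{thm:Lee-Top-PIC2}. However, the step where you produce the non-collapsing is circular, and it is precisely the step where the weak PIC2 hypothesis must enter via a tool you did not use.

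The gap: you extract a tangent cone at infinity $(C_\infty,d_\infty,o_\infty)$ of $(M,g(t_0),p)$ and assert it ``is a metric cone by Cheeger-Colding since $\Ric(g(t_0))\geq 0$.'' This is false without a non-collapsing hypothesis. Cheeger-Colding's cone-splitting requires Euclidean volume growth, i.e.\ $\AVR(g(t_0))>0$, which is exactly what you are trying to prove. In the collapsed case ($\AVR=0$) the blow-downs can drop dimension and need not be metric cones at all, so there is no Reifenberg structure to feed into Theorem~\ref{main-I}, and the self-similar expander existence results of \cite{Sch-Sim}, \cite{Der-Smo-Pos-Cur-Con} are likewise unavailable. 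Moreover, even in the non-collapsed case the conclusion of Theorem~\ref{main-I}(4) alone does not force the cone to be Euclidean; you would need the rigidity mechanism of Theorem~\ref{main-iii}, which again is run under the non-collapsing hypothesis. So the argument does not close.

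The ingredient you are missing is the one the paper singles out: since weakly PIC2 implies non-negative sectional curvature, and $|\Rm(g(t))|\leq a/t$ gives a two-sided sectional curvature bound at each fixed $t>0$, the Gromoll-Meyer injectivity radius estimate for complete non-compact manifolds of non-negative sectional curvature furnishes a uniform lower injectivity radius bound, hence $\AVR(g(t_0))>0$ directly (this is \cite[proof of Theorem~4.4]{Lee-Top-PIC2}, quoted in the discussion after Theorem~\ref{thm:Lee-Top-PIC2}). This is the one place where PIC2 rather than PIC1 is genuinely needed for your reduction, and it is a pointwise/local argument, not a blow-down. Once $\AVR(g(t_0))>0$ is secured, your appeal to Theorem~\ref{main-II} for the slice $g(t_0)$ is fine, and you can then conclude $g_0$ is flat more simply than via backward uniqueness: $\AVR(g(t))$ is constant in $t$ by Proposition~\ref{prop:RF_basics}, so $\AVR(g_0)=\omega_n$, and Bishop-Gromov rigidity (using $\Ric(g_0)\geq 0$) gives flatness.

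Two smaller remarks. First, note the paper does not itself prove Theorem~\ref{thm:Lee-Top-PIC2}; it cites Lee-Topping, whose actual argument also uses Brendle's differential Harnack inequality and a generalised soul theorem. Your reduction, once repaired as above, is a legitimately different (and shorter, given Theorem~\ref{main-II}) route. Second, when you invoke preservation of weak PIC2 under the flow, you should be careful that this is for a complete flow of possibly unbounded curvature; the relevant preservation statement in this generality is contained in \cite{Lee-Top-PIC2}, not in the classical Brendle-Schoen maximum principle for bounded-curvature flows.
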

 
Other previous results in higher dimensions under stronger convexity or pinching conditions were obtained by Chen and Zhu \cite{Chen-Zhu}, Ni and Wu \cite{Ni-Wu} and Brendle and Schoen \cite[Theorem 7.1]{Brendle-Schoen09}. 

We note that Theorem \ref{main-II} extends Theorem \ref{thm:Lee-Top-PIC2} in the following way. Although for their existence result (Theorem \ref{thm:Lee-Top-pinching-exist}) Lee-Topping do not need an a priori non-collapsing assumption, the combination of the bound $a/t$ on the curvature of the flow together with the additional weak PIC2 assumption  implies positive asymptotic volume ratio for the flow due to the Gromoll-Meyer injectivity radius estimate, as shown in the proof of \cite[Theorem $4.4$]{Lee-Top-PIC2}.  The non-negativity of the sectional curvature (which is directly implied by the PIC2 assumption) plays a significant role in their proof, and does not follow  if the Riemannian manifold is merely PIC1. Furthermore, the proof of Theorem \ref{thm:Lee-Top-PIC2} uses a differential Harnack inequality proved by  Brendle in \cite{Bre-Har-RF}, which is not known to hold in the PIC1 setting. This is a second instance in the proof of Theorem \ref{thm:Lee-Top-PIC2} where the stronger PIC2 assumption is essential.
As we only  consider the PIC1 setting, this powerful tool is not available to us. A further instance where the PIC2 assumption is important in the proof of Theorem \ref{thm:Lee-Top-PIC2} is in deriving a generalised soul theorem to rule out any non-trivial topology.
 
In the proof of Theorem \ref{main-II} we require an existence result of the type given in (E).  Since we assume that the initial metric is PIC1 pinched, this is provided by Theorem \ref{thm:Lee-Top-pinching-exist}. If one were able to show that this solution is also volume non-collapsed (as is conjectured in \cite{Top-Sur}) this would allow to remove the non-collapsing assumption in Theorem \ref{main-II}.

 \subsection{Outline of paper}

Section \ref{sec-2-rf} collects properties of solutions to the Ricci flow with non-negative Ricci curvature and curvature controlled by $C/t$ close to the initial time. We recall in Section \ref{sec-lo-ell} how to obtain local geometric mollifiers for the distance to the apex on a metric cone in case it is a tangent cone at infinity of a non-collapsed Riemannian manifold with non-negative Ricci curvature. This is  taken from the work of Cheeger-Colding \cite{CheegerColding_Rigidity} and Cheeger-Jiang-Naber \cite{Che-Jia-Nab}. Such mollifiers are constructed along a sequence of times $t_i\searrow 0$ for a solution to  Ricci flow as in the  setting of Theorem \ref{main-I}. 
Section \ref{section-can-fct} then explains how to smooth out the aforementioned mollifiers along such a Ricci flow.
Section \ref{sec-equations} is  devoted to establishing the parabolic equations satisfied by the obstruction tensor $\Ob(t):=\nabla^{g(t),\,2}u-t\Ric(g(t))-\frac{g(t)}{2}$ as well as  the function $v(t):= |\grad u|_g^2(t) -u(t)+t^2\Sc_{g(t)} +2t\, {\rm tr}_{g(t)}\Ob(t)$ 
associated to a solution $u$ of $\partt u = \lap_g u -\frac{n}{2}.$ The tensor $\Ob$ can be seen as  a measurement of  how far away such a solution to the Ricci flow is   from being a self-similar solution: $\Ob$ is zero on an expanding soliton. Using  these equations, Section \ref{section-pt-int-est} proves basic interior estimates for $u$, $v$ and the tensor $\Ob$,  which are then used in Section \ref{sec-int-est} to show 
   faster-than-polynomial decay in the integral sense for $\Ob$: see Proposition \ref{theo-dream-T}. The integral convergence rate for $\Ob$ is then upgraded to pointwise faster-than-polynomial decay in Section \ref{sec-pt-dec-sol} and the section  culminates with  the proof of Theorem \ref{main-I}. Finally, Theorems \ref{main-III} and \ref{main-II} are proved in Section \ref{sec-ric-pin}.

\subsection{Acknowledgements}
The first author is supported by grant ANR-AAPG2020 (Project PARAPLUI) of the French National Research Agency ANR. The third author is  supported by a grant in the Programm  `SPP-2026: Geometry at Infinity' of the German Research Council (DFG).

\section{Basics of Ricci flows with non-negative Ricci curvature}\label{sec-2-rf}
In this section, we collect basic properties of smooth , complete, connected   solutions to the Ricci flow $(M^n,g(t))_{t \in (0,T)}$ satisfying 
\begin{equation}\label{eq:basic_ass}
 \Ric(g(t)) \geq 0,\quad |\Rm(g(t))|_{g(t)} \leq \frac{D_0}{t},\quad \text{for all $t \in (0,T)$.}
\end{equation}

\begin{prop}\label{prop:RF_basics}
Under assumption \eqref{eq:basic_ass} the following statements hold.
\begin{enumerate}
\item There exists $C=C(D_0,n)\in (0,\infty)$ such that for $0< s\leq t<T$ and points $x$ and $y$ in $M$,
\begin{equation}\label{eq:distance-distortion}
d_{g(s)}(x,y)-C\sqrt{t-s}\leq d_{g(t)}(x,y)\leq d_{g(s)}(x,y).\\[1ex]
\end{equation}

\item There is a well defined, unique limiting metric $d_0$ on $M$ as $t\downto 0$,  
\begin{equation*}
d_0(x,y)= \lim_{t\downto 0} d_{g(t)}(x,y),\quad\text{ for all $x,y\in M$.}
\end{equation*}
 Moreover, the metric $d_0$ generates the same topology as that of $(M,d_{g(t)})$ for all $t\in(0,T)$.
We say $(M,g(t))_{t\in (0,T)}$ {\rm is coming out of} $(M,d_0)$.\\
\item If $\AVR(g(t_0))= V_0>0$ for some $t_0 \in [0,T)$ then $\AVR(g(t))= V_0>0$ for all $t \in [0,T)$.
\end{enumerate}
\end{prop}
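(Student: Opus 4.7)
The plan is to establish Part (1) via Hamilton's classical distance distortion lemma, Part (2) by passing the resulting Cauchy estimate to the limit, and Part (3) by coupling volume monotonicity with a tangent-cone-at-infinity argument à la Cheeger-Colding.

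For Part (1), the upper bound $d_{g(t)}(x,y) \leq d_{g(s)}(x,y)$ for $s \leq t$ is immediate from $\Ric \geq 0$: since $\partial_t g = -2\Ric \leq 0$, $g(\cdot)$ is pointwise non-increasing as a bilinear form, so the length of any fixed curve is non-increasing in $t$, and applying this to a $g(s)$-minimizing geodesic from $x$ to $y$ yields the bound. For the lower estimate, I would invoke Hamilton's classical distance distortion lemma in its barrier form; the choice of cutoff radius $r_0 \sim \sqrt{t}$ combined with $|\Ric| \leq (n-1)D_0/t$ produces $\frac{d^+}{dt} d_{g(t)}(x,y) \geq -C(n, D_0)\,t^{-1/2}$ in the sense of forward difference quotients. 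Integrating from $s$ to $t$ and using $\sqrt{t} - \sqrt{s} \leq \sqrt{t-s}$ gives the claimed lower bound.

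For Part (2), Part (1) shows $t \mapsto d_{g(t)}(x,y)$ is monotone non-increasing and uniformly Cauchy as $t \downto 0$: $|d_{g(t)}(x,y) - d_{g(s)}(x,y)| \leq C \sqrt{|t-s|}$. Hence the pointwise limit $d_0(x,y) := \lim_{t\downto 0} d_{g(t)}(x,y)$ exists; symmetry, the triangle inequality, and strict positivity off the diagonal (since $d_0(x,y) \geq d_{g(t)}(x,y) > 0$ for any $t > 0$ and $x \neq y$) pass to the limit. Sending $s \downto 0$ in the distortion bound yields $d_{g(t)} \leq d_0 \leq d_{g(t)} + C\sqrt{t}$ uniformly on $M \times M$, so the identity map is a global near-isometry, hence a homeomorphism $(M, d_{g(t)}) \to (M, d_0)$, giving the topology equivalence.

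For Part (3), the non-increasing direction $\AVR(g(t)) \leq \AVR(g(s))$ for $s \leq t$ follows by combining the inclusion $B_{g(t)}(p,r) \subset B_{g(s)}(p, r + C\sqrt{t-s})$ from Part (1) with the pointwise inequality $dv_{g(t)} \leq dv_{g(s)}$ (as $\partial_\tau dv_{g(\tau)} = -\R_{g(\tau)}\,dv_{g(\tau)} \leq 0$), dividing by $\omega_n r^n$ and letting $r \to \infty$. The main obstacle is the reverse inequality. The strategy is to rescale distances by $r^{-1}$ and let $r \to \infty$: the distance distortion gives $r^{-1}|d_{g(t)} - d_{g(s)}| \leq C r^{-1}\sqrt{|t-s|} \to 0$, so $(M, r^{-1}d_{g(t)}, p)$ and $(M, r^{-1}d_{g(s)}, p)$ share all pointed Gromov-Hausdorff subsequential limits. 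Under $\AVR(g(t_0)) = V_0 > 0$, the Cheeger-Colding theorem on tangent cones at infinity of non-collapsed $\Ric \geq 0$ manifolds identifies every such limit as a metric cone $C(X_0)$ of Hausdorff dimension $n$ with $\mathcal H^n$-volume $V_0 \omega_n$ on its unit ball. Cheeger-Colding measured convergence (valid under $\Ric \geq 0$) then forces $\AVR(g(t)) = V_0$ for every $t$, since a strictly smaller AVR at some $t$ would make the time-$t$ rescaled sequence collapse, producing a limit of strictly smaller Hausdorff dimension, contradicting the identification with the $n$-dimensional cone $C(X_0)$.
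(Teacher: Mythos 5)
Your overall route matches the references cited in the paper: Hamilton's distance distortion lemma for (1), the monotone limit for (2), and the Schulze--Simon blow-down argument (reproved for $\Ric\geq 0$) for (3). However, two specific deductions as you state them do not hold.

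In Part (2), the step ``$d_{g(t)} \leq d_0 \leq d_{g(t)} + C\sqrt{t}$ ... so the identity map is a global near-isometry, hence a homeomorphism'' is not a valid implication: a map that is an isometry up to a constant additive error need not be a homeomorphism (the inclusion $\mathbb{Z}\hookrightarrow\mathbb{R}$ is an additive $(1,1)$-near-isometry but not a homeomorphism). The one-sided bound $d_{g(t)}\leq d_0$ does give that $d_{g(t)}$-open sets are $d_0$-open, but the reverse inclusion does not follow from the uniform additive estimate alone, since $C\sqrt{t}$ is a fixed constant for fixed $t$. The correct argument (still using only your estimates) is to show $d_0(y,\cdot)$ is $d_{g(t_0)}$-continuous: given $\epsilon>0$, pick $s>0$ with $C\sqrt{s}<\epsilon/2$, note that $g(s)$ and $g(t_0)$ are smooth metrics on the same manifold and hence topologically equivalent, so $d_{g(t_0)}(y,y_i)\to 0$ forces $d_{g(s)}(y,y_i)\to 0$, and then $d_0(y,y_i)\leq d_{g(s)}(y,y_i)+C\sqrt{s}<\epsilon$ for $i$ large. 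This crucially uses the freedom to choose the comparison time $s$ small depending on $\epsilon$, not a single-scale near-isometry.

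In Part (3), the concluding sentence ``a strictly smaller AVR at some $t$ would make the time-$t$ rescaled sequence collapse, producing a limit of strictly smaller Hausdorff dimension'' is incorrect: a smaller but still positive $\AVR$ does not cause collapse nor a drop in Hausdorff dimension; it simply produces an $n$-dimensional cone with smaller unit-ball volume. The intended argument is more direct and does not need a contradiction: by your distance distortion bound, $r^{-1}|d_{g(t)}-d_{g(s)}|\to 0$ as $r\to\infty$, so along any common subsequence $(M,r_i^{-1}d_{g(t)},p)$ and $(M,r_i^{-1}d_{g(s)},p)$ converge in pointed Gromov--Hausdorff distance to the same metric space, a cone $C(X_0)$ by Cheeger--Colding; Colding's volume convergence theorem then gives $\AVR(g(t))=\mathcal{H}^n(B_{C(X_0)}(o,1))/\omega_n=\AVR(g(s))$. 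No appeal to collapsing or dimension drop is needed. With these two corrections, your proof aligns with the paper's citations to Hamilton, Simon--Topping, and Schulze--Simon.
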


\begin{proof} The first statement follows from \cite[Section $17$]{HamFor}. The existence of $d_0$ was shown in \cite{SiTo2}. It was further shown there that this implies that the topology of $M$, which agrees with that of $(M,d_{g(t)})$, is also the same as the topology generated by $(M,d_0)$. 

The statement that $\AVR(g(t))$ is constant in time for $t>0$  follows as in  \cite[Theorem $5.2$]{Sch-Sim} (the  proof only requires $\Ric(g(t))\geq 0$ instead of non-negative curvature operator) (cf. Theorem 7, \cite{Yokota}).
 \end{proof}

\section{Local elliptic regularizations of $d_0^2$} \label{sec-lo-ell}

\noindent This short section is devoted to the existence of a sequence of smooth maps that approximates the function $d_0(o,\cdot)^2/4$ on a cone $(C(X),d_0)$ with apex $o$. We state it in our Ricci flow setting but its proof is a mere translation from the results stated in Appendix \ref{appendix-Poisson}. 

\begin{prop}\label{prop-ell-reg}
Let $(M^n,g(t))_{t\in(0,T)}$ be a smooth, complete, connected  Ricci flow such that $
\Ric(g(t))\geq 0$ and $|\Rm(g(t))|_{g(t)}\leq D_0/t$ for $t\in(0,T).$ Assume furthermore that $\AVR(g(t))\geq v>0$, for all $t\in(0,T)$, and that $(C(X),d_0,o):=\lim_{t\rightarrow 0^+}(M^n,g(t),p)$ is a metric cone. Let $\varepsilon_i \searrow 0$. Then there exists $C(n,v) \in (0,\infty)$, $t_i \searrow 0$ and  $u_i \in C^\infty(B_{g(t_i)}(o,4))$ satisfying:\\[-2ex]
\begin{enumerate}
\item (Poisson equation) $\Delta_{g(t_i)}u_i=\frac{n}{2}.$\\[-1ex]
\item (Hessian bounds on $u_i$)
\begin{equation*}
\dashint_{B_{g(t_i)}(o,4)}\left|\nabla^{g(t_i),\,2}u_i-\frac{g(t_i)}{2}\right|^2_{g(t_i)}\,d\mu_{g(t_i)}\leq C(n,v)\varepsilon_i.
\end{equation*}
\item (Sharp $L^2$ gradient bound) 
\begin{equation*}
\dashint_{B_{g(t_i)}(o,4)}\left||\nabla^{g(t_i)}u_i|^2_{g(t_i)}-u_i\right|^2\,d\mu_{g(t_i)}\leq C(n,v)\varepsilon_i.
\end{equation*}
\item ($L^{\infty}$ upper gradient bound) $|\nabla^{g(t_i)}u_i|_{g(t_i)}\leq C(n,v).$\\[-1ex]
\item ($L^{\infty}$ closeness)
\begin{equation}\label{linfty-u-init}
 \sup_{B_{g(t_i)}(o,4)}\left|u_i-\frac{d_{g(t_i)}(o,\cdot)^2}{4}\right|\leq 4 \varepsilon_i.
\end{equation}
\end{enumerate}
\end{prop}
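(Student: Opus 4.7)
The plan is to construct each $u_i$ as the Dirichlet solution of the Poisson equation $\Delta_{g(t_i)} u_i = n/2$ on a fixed large ball around $o$ in $(M,g(t_i))$, and then read off properties (1)--(5) from the Cheeger--Colding--Jiang--Naber quantitative cone rigidity estimates collected in Appendix \ref{appendix-Poisson}. The role of the Ricci flow here is purely auxiliary: it produces, via the choice $t_i \downto 0$, a sequence of smooth Riemannian manifolds whose geometry on $B_{g(t_i)}(o, R)$ is progressively close to that of the limit cone on $B_{d_0}(o, R) \subset C(X)$.

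First, combining the distance distortion \eqref{eq:distance-distortion} with the existence of the initial distance $d_0$ from Proposition \ref{prop:RF_basics}, together with the uniform non-collapsing $\AVR(g(t)) \geq v > 0$ and $\Ric(g(t)) \geq 0$, I would conclude that $(M, d_{g(t)}, o, d\mu_{g(t)})$ converges, in the pointed measured Gromov--Hausdorff topology, to the non-collapsed cone $(C(X), d_0, o)$ as $t \downto 0$. Given any threshold $\delta_i \to 0$ demanded by the almost-rigidity statements in Appendix \ref{appendix-Poisson}, I would pick $t_i \downto 0$ small enough that this convergence on $B_{g(t_i)}(o, 10)$ holds within $\delta_i$, and then solve the Dirichlet problem
\[ \Delta_{g(t_i)} u_i = \frac{n}{2} \text{ on } B_{g(t_i)}(o, R), \qquad u_i = \frac{d_{g(t_i)}(o,\cdot)^2}{4} \text{ on } \partial B_{g(t_i)}(o, R), \]
for some fixed $R > 4$ (smoothing the boundary data if necessary). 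Smoothness of $g(t_i)$ and standard elliptic regularity then yield a smooth $u_i$, giving (1).

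Items (2), (3) and (5) would follow by direct application of the Cheeger--Colding quantitative rigidity for Poisson approximations of $r^2/4$ on non-collapsed $\Ric \geq 0$ spaces close to metric cones: the $L^2$ defect of the Hessian relative to $g(t_i)/2$, the $L^2$ defect of $|\nabla^{g(t_i)} u_i|^2_{g(t_i)} - u_i$, and the $L^\infty$ error $|u_i - d_{g(t_i)}(o,\cdot)^2/4|$ all decay in the closeness $\delta_i$. For (4), the standard Cheng--Yau gradient estimate applied to the Poisson solution $u_i$ in the $\Ric \geq 0$ setting would control $|\nabla^{g(t_i)} u_i|_{g(t_i)}$ in terms of $\sup_{B(o,R)} u_i$, which is itself controlled by $R$ via comparison with $d_{g(t_i)}(o,\cdot)^2/4$. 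The main obstacle is strictly bookkeeping: matching the quantitative rates $\epsilon_i$ in (2), (3) and (5) to the pointed measured Gromov--Hausdorff closeness scale at $t_i$. This is precisely the content of the Cheeger--Jiang--Naber estimates \cite{Che-Jia-Nab}, so the proof of the proposition reduces to carefully translating those estimates via the Appendix.
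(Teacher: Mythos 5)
Your proposal has the right skeleton --- apply the Cheeger--Jiang--Naber almost-rigidity estimate (Theorem \ref{che-jia-nab-maps}) after choosing $t_i$ small --- but it glosses over the step that carries the real content, and in one place misconceives what the hypotheses of that theorem actually are. Theorem \ref{che-jia-nab-maps} does not conclude the Hessian, gradient and $L^\infty$ estimates from pointed measured Gromov--Hausdorff closeness to a cone: it bounds the Hessian and gradient defects by $C(n,v)\,|\mathcal W^{\delta}_{r^2}(x)-\mathcal W^{\delta}_{2r^2}(x)|$, i.e.\ by the \emph{local entropy pinching}, under the additional hypothesis that $B_g(x,r\delta^{-1})$ is $(0,\delta^2)$-symmetric. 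So the claim that properties (2), (3) and (5) ``decay in the closeness $\delta_i$'' and that the matching of rates is ``strictly bookkeeping'' skips the argument that the entropy pinching can actually be made $\leq \varepsilon_i$ for $t$ small.

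Making the entropy pinching small is precisely where the paper does most of its work, and it is not automatic from GH closeness: the paper invokes Theorem \ref{theo-che-jia-nab-vol-entropy} to replace $\mathcal W^{\delta}_s$ by the volume ratio $\log\mathcal V^{\delta^2}_g(o,\sqrt s)$, under a preliminary volume-pinching hypothesis at the larger scale $\sqrt s\,\delta^{-1}$; it then uses the fact that a metric cone has \emph{scale-invariant} volume ratio, so that $\mathcal V^0_{d_0}(o,\cdot)$ is constant, and transfers this from $(C(X),d_0)$ back to $(M,g(t))$ for small $t$ by Cheeger--Colding volume continuity. This chain --- entropy $\to$ volume ratio $\to$ conical rigidity $\to$ volume continuity --- is what fixes $\delta$ and then fixes $t_i$. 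Your proposal never mentions the entropy functional, nor the intermediate volume-ratio step, so the hypotheses of the theorem you invoke are left unverified. Two smaller points: (i) you needn't construct $u_i$ yourself by solving a Dirichlet problem, since Theorem \ref{che-jia-nab-maps} already asserts the existence of such a $u$; if you do build your own $u_i$, you still owe the reader an argument that the CJN estimates apply to \emph{your} solution and not only to theirs. (ii) Property (4) is also supplied directly by Theorem \ref{che-jia-nab-maps}; falling back on Cheng--Yau is fine but unnecessary.
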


\begin{proof} Note that by Proposition \ref{prop:RF_basics} it holds that $\AVR(g(t)) = V_0>0$ for all $t\in [0,T)$ and thus $\vol_{g(t)}B_{g(t)}(o,r)\geq V_0 r^{-n}>0$ for all $r>0$.  Theorem \ref{che-jia-nab-maps}, applied with $v = V_0/2$, $\varepsilon = \varepsilon_i$, yields $\delta_1 = \delta_{  1}(n,v, \varepsilon)>0$, so that to obtain the desired statements it is sufficient to show that we can choose $ t_i>0$ sufficiently small such that
\begin{itemize}
 \item[(a)] $B_{g(t)}(o, 4\delta^{-1})$ is $(0, \delta^2)$-symmetric.
 \item[(b)] $ \left|\mathcal{W}^{\delta}_{8}(x)-\mathcal{W}^{\delta}_{4}(x)\right| \leq \varepsilon_i $
\end{itemize}
for some $\delta \leq \delta_1,$  for all $t\leq t_i$, 
 where $\mathcal{W}^{\delta}_{s}$ is the local entropy with respect to  $g(t)$, as defined in \cite[Definition 4.19]{Che-Jia-Nab}.

We will first  show that   $(b)$ is satisfied for all $t>0$ sufficiently small, for some $\de \leq \de_1$. This will fix $0<\delta\leq \delta_1$. We will then show that    $(a)$ is satisfied for all $t>0$ sufficiently small, thus completing the proof. 

Note that Theorem \ref{theo-che-jia-nab-vol-entropy}, applied with $\varepsilon':=\varepsilon/3>0$ and $v=V_0/2$ yields $\delta_2>0$, such that if $\delta \leq \delta_0 := \min\{\de_1,\de_2\}$ condition $(\operatorname{b})$, for the smooth metric $g(t)$,  is satisfied by the triangle inequality, provided 
\begin{itemize}
 \item[(c)]  $|\mathcal{V}^0_{g(t)}(o,\sqrt{s}\delta^{-1})-\mathcal{V}^0_{g(t)}(o,\sqrt{s}\delta)|\leq\delta$  for $s=4,8$,
 \item[(d)] \label{item-d} $|\log \mathcal{V}^{\delta^2}_{g(t)}(o,2) - \log \mathcal{V}^{\delta^2}_{g(t)}(o,\sqrt{8})|\leq \varepsilon',$
 \end{itemize}
where 
$\mathcal{V}^\kappa_g(x,r):=\frac{\vol_gB_g(x,r)}{\vol_{-\kappa}\mathbb{B}(r)},$
for any $\kappa \geq 0$ and $\vol_{-\kappa}\mathbb{B}(r)$ is the volume of a ball of radius $r$ in the simply connected space of constant curvature $-\kappa$.
Note that $(M,d_0, o)$ is a cone, and thus $\mathcal{V}^{0}_{d_0}(o,2) = \mathcal{V}^{0}_{d_0}(o,\sqrt{8}) = V_0$, so we can choose $0<\delta\leq \delta_0$ such that
\begin{equation*}
 |\log \mathcal{V}^{\delta^2}_{d_0}(o,2) - \log \mathcal{V}^{\delta^2}_{d_0}(o,\sqrt{8})|\leq \varepsilon'/2\, . 
 \end{equation*}
This fixes $\delta >0$.  Cheeger-Colding's volume continuity Theorem \cite{Cheeger_notes},  then implies that for all $0<t \leq t_i$  sufficiently small
\begin{equation*}
  |\log \mathcal{V}^{\delta^2}_{d_0}(o,2)- \log \mathcal{V}^{\delta^2}_{g(t)}(o,2)| +  |\log \mathcal{V}^{\delta^2}_{d_0}(o,\sqrt{8})- \log \mathcal{V}^{\delta^2}_{g(t)}(o,\sqrt{8})| \leq \varepsilon'/2\, , 
  \end{equation*}
and thus by the triangle inequality condition $(\operatorname{d})$ is satisfied for all such $t>0$.

Using  once again    $\mathcal{V}^0_{d_0}(o,\sqrt{s}\delta^{-1}) = \mathcal{V}^0_{d_0}(o,\sqrt{s}\delta) = V_0$ for $s=4,8$, and Cheeger-Colding's volume continuity Theorem (see \cite{Cheeger_notes}),   we  see that  $(\operatorname{c})$ is satisfied for all $0<t \leq t_i$ sufficiently small, after reducing $t_i$ if necessary.  By the discussion above this yields condition $(\operatorname{b})$ for all $t\leq t_i$ sufficiently small.

Again by by Proposition \ref{prop:RF_basics} we have that $(B_{g(t)}(o, 4\delta^{-1}), d_{g(t)})$ converges in Gromov-Hausdorff distance to $(B_{d_0}(o, 4\delta^{-1}), d_0)$. Since $(M, d_0,o)$ is a metric cone this yields condition $(\operatorname{a})$ for all $0< t\leq t_i$ by further decreasing $t_i$   if necessary.
\end{proof}

\section{Setup and local parabolic regularizations of $d_0^2$}\label{section-can-fct}

\noindent In this section we consider a class of Ricci flows coming out of a cone and outline the strategy of constructing a function mimicking the properties of the (time-dependent) potential function along a self-similarly expanding solution.\\[2ex]
\noindent {\bf Main assumption.} Let $(M^n,g(t))_{t\in(0,T)}$ be a smooth, complete, connected Ricci flow such that 
\begin{equation}
\Ric(g(t))\geq 0,\quad |\Rm(g(t))|_{g(t)}\leq \frac{D_0}{t},\quad t\in(0,T),\tag{Curv}\label{curv-cond}
\end{equation}
 which is uniformly non-collapsed, i.e.~$\AVR(g(t))\geq V_0>0$, for $t\in(0,T)$ and such that the initial condition (IC) satisfies
 \begin{equation}
 \tag{IC}\label{IC}
 (C(X),d_0,o):=\lim_{t\rightarrow 0^+}(M^n,g(t),p),
 \end{equation}
  is a metric cone.\\[2ex]
Recall that a solution $(M^n,g(t))_{t>0}$ is a self-similarly expanding gradient Ricci soliton if $g(t)=t\varphi_t^*g$, $t>0$, where $\partial_t\varphi_t=-t^{-1}\nabla^{g}f\circ\varphi_t$, $\varphi_{t=1}=\Id_M$. Here $f:M\rightarrow\mathbb{R}$ is a smooth function called the soliton potential function. Alternatively, a triple $(M^n,g,\nabla^g f)$ is an expanding gradient Ricci soliton if the following soliton equation holds:
\begin{equation}\label{sol-id}
\Ric(g)-\nabla^{g,2} f=-\frac{g}{2}.
\end{equation}
The soliton identities of such a soliton are:
\begin{equation}\label{id-egs}
|\nabla^g f|^2_g+\R_g=f,\quad \Delta_gf=\R_g+\frac{n}{2},\quad \text{on $M$.}
\end{equation}
The first identity is obtained by applying the second Bianchi identity $2\div_g\Ric(g)=d\,\R_g$ together with the Bochner formula $\div_g\nabla^{g,2}f=d\Delta_gf+\Ric(g)(\nabla^g f,\cdot)$ to the soliton equation \eqref{sol-id} while the second is obtained by tracing the soliton equation \eqref{sol-id}.

For a general solution  coming out of a cone (i.e.~satisfying the main assumption, but not necessarily self-similarly expanding), we aim to mimic the properties of the potential function associated to an expanding gradient Ricci soliton by considering solutions to the following linear heat equation:
 \begin{equation}
\partial_tu=\Delta_{g(t)}u-\frac{n}{2},\quad \text{on $M\times(0,T)$},\quad  u|_{t=0}=\frac{d_0^2(o,\cdot)}{4}.\label{eqn-heat-sing}
\end{equation}
Note that for an asymptotically conical expanding gradient Ricci soliton $(M,g,\nabla^gf)$, we can take $u(\cdot,t)=t\varphi_t^*f$ where again, $\partial_t\varphi_t=-t^{-1}\nabla^gf\circ\varphi_t$. A straightforward computation shows that $$\partial_tu=\Delta_{g(t)}u-\frac{n}{2}=t\R_{g(t)} \ \ \text{and} \ \ \lim_{t\searrow 0} u(t) = \frac{d_0^2(o,\cdot)}{4}\, ,$$
where $o$ is the vertex  and $d_0$ the metric distance of the asymptotic cone: see for instance \cite[Lemma 3.2]{CDS}.

Note that due to the singular initial condition the existence of solutions \eqref{eqn-heat-sing} is not guaranteed by standard methods. Instead we fix a sequence $\varepsilon_i \searrow 0$ and consider the sequence $t_i\searrow 0$ of times $t_i>0$ given by Proposition \ref{prop-ell-reg}, together with functions $u_i \in C^\infty(B_{g(t_i)}(o,4))$ satisfying the estimates $(1) - (5)$. For the ease of notation we consider the sequence of Ricci flows 
\begin{equation}\label{eq:def_sol_RF}
[0, T) \ni t \mapsto g_i(t):= g(t+t_i),
\end{equation}
where we have replaced $T$ by $T-t_1$ to allow for a common time interval of definition for the sequence of flows considered. Note that $d_{g_i(0)} \to d_0$  locally uniformly  as $i \to \infty$  and $g_i (\cdot,\cdot) \to g(\cdot,\cdot) $  as $i\to \infty$ locally smoothly on $M\times (0,T)$. With this set-up we have $u_i\in C^\infty(B_{g_i(0)}(o,4))$ and $u_i$ satisfy the properties $(1) - (5)$ in Proposition \ref{prop-ell-reg} with respect to $g_i(0)$. 

Standard existence theory yields solutions $u^i$ to the following heat equation with Dirichlet boundary conditions
\begin{equation} \label{eqn-heat-sing-bis-rep}
  \left\{
      \begin{aligned}
      \ \partial_tu^i&=\Delta_{g_{i}(t)}u^i-\frac{n}{2},\quad\text{on $B_{g_i(0)}(o,4)\times(0,T)$},\\
        u^i&=u_i,\quad\text{on $ B_{g_i(0)}(o,4)\times \{t=0\}\cup \partial B_{g_i(0)}(o,4)\times (0,T)$}. 
           \end{aligned}
    \right.  
\end{equation}

Observe that the maximum principle applied to $u^i+\frac{n}{2}t$ combined with [\eqref{linfty-u-init}, Proposition \ref{prop-ell-reg}] yields:
\begin{prop}\label{ptwise-bd-ui}
Let $(M^n,g(t))_{t\in (0,T)} $ be a smooth, complete, connected  solution to Ricci flow satsifying the assumptions (Curv) and (IC) of the beginning of this section.  There exists $C=C(n,T) \in (0,\infty)$ such the solutions $(u^i)_{i\in \N}$ to \eqref{eqn-heat-sing-bis-rep}  satisfy \begin{equation*}
\sup_{B_{g_i(0)}(o,4)\times(0,T)}|u^i(x,t)|\leq C(n,T).
\end{equation*}
\end{prop}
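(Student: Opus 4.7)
The plan is to reduce the claim to a standard weak maximum principle by removing the inhomogeneity $-n/2$. Set $w^i(x,t) := u^i(x,t) + \tfrac{n}{2}\,t$ on the closed parabolic cylinder $\overline{B_{g_i(0)}(o,4)} \times [0,T)$. A direct computation using \eqref{eqn-heat-sing-bis-rep} gives
\begin{equation*}
\partial_t w^i = \partial_t u^i + \tfrac{n}{2} = \Delta_{g_i(t)} u^i = \Delta_{g_i(t)} w^i,
\end{equation*}
so $w^i$ solves the homogeneous heat equation associated with the (smooth, time-dependent) metric $g_i(t)$ on the fixed bounded domain $B_{g_i(0)}(o,4)$.

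Next I would bound $w^i$ on the parabolic boundary $\partial_{\mathrm{par}} := \bigl(\overline{B_{g_i(0)}(o,4)} \times \{0\}\bigr) \cup \bigl(\partial B_{g_i(0)}(o,4) \times [0,T)\bigr)$. There $w^i = u_i + \tfrac{n}{2}\,t$, and by item $(5)$ of Proposition \ref{prop-ell-reg} applied to the flow $g_i$,
\begin{equation*}
\left| u_i \right| \leq \frac{d_{g_i(0)}(o,\cdot)^2}{4} + 4\varepsilon_i \leq 4 + 4\varepsilon_1
\end{equation*}
on $B_{g_i(0)}(o,4)$, uniformly in $i$. Hence $\sup_{\partial_{\mathrm{par}}} |w^i| \leq 4 + 4\varepsilon_1 + \tfrac{n}{2}\,T =: C(n,T)$.

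Finally, I would invoke the weak maximum and minimum principles for the linear parabolic equation $\partial_t w^i = \Delta_{g_i(t)} w^i$ on the bounded parabolic cylinder (valid thanks to the smoothness of $g_i(t)$ and of $w^i$ up to the boundary coming from standard parabolic existence theory for \eqref{eqn-heat-sing-bis-rep}) to conclude $|w^i| \leq C(n,T)$ throughout $B_{g_i(0)}(o,4) \times [0,T)$. Subtracting $\tfrac{n}{2}\,t$, which is bounded by $\tfrac{n}{2}\,T$, yields the stated uniform $L^\infty$ bound on $u^i$, with a constant depending only on $n$ and $T$. There is no real obstacle here beyond checking that the maximum principle applies in the time-dependent-metric setting, which is standard since at each time $g_i(t)$ is smooth and the spatial domain is a fixed relatively compact set.
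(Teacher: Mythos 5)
Your proof is correct and follows exactly the argument indicated in the paper: replace $u^i$ by $u^i + \tfrac{n}{2}t$ to kill the source term, bound it on the parabolic boundary via item (5) of Proposition~\ref{prop-ell-reg} (together with $d_{g_i(0)}(o,\cdot)\le 4$ on the ball), and apply the weak maximum principle for the time-dependent metric on the bounded domain. No meaningful difference from the paper's reasoning.
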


We will see in Proposition \ref{prop-grad-bd-a-priori}  that the solutions $u^i$ satisfy uniform interior estimates on higher order covariant derivatives. Furthermore, Corollary \ref{coro-dist-comp-sol} gives uniform control on the attainment of the initial condition. That is, we will see  (up to a subsequence)  that
\begin{equation}\label{eq:def-limit-solution}
 u^i \to u \ \text{locally smoothly on}\ B_{d_0}(o,4) \times (0,T),
\end{equation}
as $i \to \infty$ where $u$ solves 
\begin{equation} \label{eq:eqn-limit-solution}
  \left\{
      \begin{aligned}
      \ \partial_tu&=\Delta_{g(t)}u-\frac{n}{2},\quad\text{on $B_{d_0}(o,4)\times(0,T)$},\\
        u&=\tfrac{d_0^2}{4}\quad\text{on $ B_{d_0}(o,4)\times \{t=0\}\cup \partial B_{d_0}(o,4)\times (0,T)$}.
           \end{aligned}
    \right.
\end{equation}

\section{Equations}\label{sec-equations}
\noindent In this section, we consider a solution to the heat equation
$$\partial_tu=\Delta_{g(t)}u-\frac{n}{2}, $$
 on $U\times(0,T)$ for an open set    $U \subseteq M,$
where $(M,g(t))_{t\in (0,T)}$ is a smooth one parameter family of Riemannian manifolds.
\begin{defn}
Let $(M,g(t))_{t\in (0,T)}$ be a smooth one parameter family of Riemannian manifolds,  
$u:U \times (0,T) \to \mathbb{R}$    smooth, and  $U \subseteq M$ an open set.  
We define the obstruction tensor $ \Ob$ on $U \times (0,T)$ by
\begin{equation*}
\Ob(t):=-t\Ric(g(t))-\frac{g(t)}{2}+\frac{1}{2}\Li_{\nabla^{g(t)}u(t)}(g(t)).
\end{equation*}
\end{defn}
\begin{rk}
If  $(M,g(t))_{t\in (0,T)}$ is a smooth solution to Ricci flow, then  $\Ob(t)=0$ if and only if $(M^n,g(t))_{t>0}$ is a self-similarly expanding gradient Ricci soliton with potential function $u$.
\end{rk}
\begin{prop}\label{prop-eqn-evo-obs-tensor}
 Let $(M,g(t))_{t\in (0,T)}$ be a smooth solution to Ricci flow, $u:U \times (0,T) \to \mathbb{R}$     a smooth solution  to $\partial_tu=\Delta_{g(t)}u-\frac{n}{2}$ on $U \times (0,T)$.  Then, 
 the Hessian $\nabla^{g(t),\,2}u$ satisfies 
 \begin{equation*}
\partial_t\nabla^{g(t),\,2}u=\Delta_{g(t),L}\nabla^{g(t),\,2}u,\quad\text{on $U\times (0,T)$,}
\end{equation*}
where, for any smooth $2$-tensor $S,$ 
$$(\Delta_{g(t),L} S)_{ij} = \Delta_g S_{ij} +2g^{pk}g^{ql}{\Rm(g)}_{ipqj}S_{kl}-g^{kl}\Ric(g)_{ik}S_{jl}-g^{kl}\Ric(g)_{jk}S_{il}$$ 
is the Lichnerowicz Laplacian  of $S$. In particular, the obstruction tensor satisfies the following linear evolution equation
\begin{equation}
\partial_t\Ob=\Delta_{g(t),L}\Ob,\quad\text{on $U\times (0,T)$.}\label{eqn-evo-T}
\end{equation}
Furthermore, 
\begin{equation*}
\partial_t\tr_{g(t)}\Ob=\Delta_{g(t)}\tr_{g(t)}\Ob+2\langle\Ric(g(t)),\Ob(t)\rangle_{g(t)},\quad\text{on $U\times (0,T)$.}
\end{equation*}
\end{prop}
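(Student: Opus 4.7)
The plan is to verify the three identities by direct computation in local coordinates, combining the Ricci identity, the second Bianchi identity, and the Ricci-flow evolution of the Levi-Civita connection $\partial_t \Gamma^k_{ij} = -g^{kl}(\nabla_i R_{jl} + \nabla_j R_{il} - \nabla_l R_{ij})$.

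\emph{Hessian equation.} Differentiating $\nabla_i\nabla_j u = \partial_i\partial_j u - \Gamma^k_{ij}\partial_k u$ in $t$ and inserting $\partial_t u = \Delta_{g(t)} u - n/2$ yields
\begin{equation*}
\partial_t(\nabla_i\nabla_j u) = \nabla_i\nabla_j\Delta_{g(t)} u - (\partial_t \Gamma^k_{ij})\nabla_k u.
\end{equation*}
A standard commutator identity for scalar functions (two applications of the Ricci identity together with one use of the second Bianchi identity) expresses $\nabla_i\nabla_j\Delta_{g(t)} u - \Delta_{g(t)}(\nabla_i\nabla_j u)$ as a curvature-linear combination of $\nabla^{g(t),\,2} u$ plus a lower-order term of the form $-(\nabla_i R_{jp} + \nabla_j R_{ip} - \nabla_p R_{ij})\nabla^p u$. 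This lower-order term cancels exactly against $-(\partial_t\Gamma^k_{ij})\nabla_k u$, while the remaining curvature terms reassemble into $\Delta_{g(t),L}(\nabla_i\nabla_j u)$, giving the first identity.

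\emph{Evolution of $\Ob$.} Since $\Li_{\nabla^{g(t)} u}g(t) = 2\nabla^{g(t),\,2} u$, one has $\Ob_{ij} = \nabla_i\nabla_j u - t R_{ij} - g_{ij}/2$. It suffices to check that $-tR_{ij} - g_{ij}/2$ belongs to the kernel of $\partial_t - \Delta_{g(t),L}$. Hamilton's evolution identity $(\partial_t - \Delta_{g(t),L})\Ric(g(t)) = 0$ along Ricci flow gives $(\partial_t - \Delta_{g(t),L})(t\Ric(g(t))) = \Ric(g(t))$, while $\partial_t g(t) = -2\Ric(g(t))$ combined with the algebraic identity $\Delta_{g(t),L}g(t) = 0$ (a direct consequence of the curvature symmetries under the sign convention of the statement) gives $(\partial_t - \Delta_{g(t),L})(g(t)/2) = -\Ric(g(t))$. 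These two contributions cancel, and combining with the Hessian equation yields \eqref{eqn-evo-T}.

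\emph{Trace equation.} Using $\partial_t g^{ij} = 2R^{ij}$,
\begin{equation*}
\partial_t \tr_{g(t)}\Ob = 2\langle\Ric(g(t)), \Ob(t)\rangle_{g(t)} + g^{ij}\Delta_{g(t),L}\Ob_{ij}.
\end{equation*}
For any symmetric $2$-tensor $S$ one has $g^{ij}\Delta_{g(t),L}S_{ij} = \Delta_{g(t)}\tr_{g(t)}S$: contracting the three curvature terms of $\Delta_{g(t),L}$ by $g^{ij}$ produces $+2\langle\Ric,S\rangle_{g(t)}$ from the Riemann term and $-2\langle\Ric,S\rangle_{g(t)}$ from the two Ricci terms, which cancel. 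This proves the third identity. The main obstacle is purely the bookkeeping of Step 1: once one recognises that the cancellation of the $\nabla\Ric$-type terms is exactly the second Bianchi identity, the remaining assertions reduce to algebraic manipulations of the Ricci-flow evolution identities, provided a single consistent sign convention for $R_{ipqj}$ and $R_{ij}$ (the one under which $\Delta_{g(t),L}g = 0$) is used throughout.
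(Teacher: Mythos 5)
Your proposal is correct and follows essentially the same route as the paper: the Hessian commutator identity $(\partial_t-\Delta_{g(t),L})\nabla^{g(t),2}u=\nabla^{g(t),2}(\partial_t u-\Delta_{g(t)}u)$ (which the paper cites from \cite[Lemma 2.33]{CLN} and which you re-derive via $\partial_t\Gamma$, the Ricci identity, and the second Bianchi identity), the observation that $g(t)+2t\Ric(g(t))$ lies in the kernel of $\partial_t-\Delta_{g(t),L}$ (you additionally spell out the fact that $\Delta_{g(t),L}g(t)=0$, which the paper leaves implicit), and tracing the resulting equation while accounting for $\partial_t g^{ij}=2R^{ij}$. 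No gaps; the only difference is a slightly more explicit bookkeeping of the curvature cancellations.
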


\begin{proof}
A computational proof of the first identity can be found for instance in \cite[Lemma $2.33$]{CLN}: indeed, one can show that for any time-dependent smooth function $u$, 
\begin{equation*}
\left(\partial_t-\Delta_{g(t),L}\right)\nabla^{g(t),\,2}u=\nabla^{g(t),\,2}\left(\partial_tu-\Delta_{g(t)}u\right).
\end{equation*}
This proof relies on the full Bianchi identity together with commutation formulae involving the curvature and its covariant derivatives.

Then \eqref{eqn-evo-T} follows by noting that the tensor $g(t)+2t\Ric(g(t))$ satisfies the same heat equation as $\nabla^{g(t),\,2}u$ does since $\partial_t\Ric(g(t))=\Delta_{g(t),L}\Ric(g(t))$ and $\partial_tg(t)=-2\Ric(g(t))$ by definition of the Ricci flow.

The last statement is obtained by tracing \eqref{eqn-evo-T} with respect to $g(t)$ together with the fact that $\partial_t\tr_{g(t)}S(t)=\tr_{g(t)}(\partial_tS(t))+2\langle \Ric(g(t)),S(t)\rangle_{g(t)}$ for an arbitrary time-dependent smooth symmetric $2$-tensor $S(t)$.
\end{proof}

\begin{defn}\label{defn-v}
Let $(M,g(t))_{t\in (0,T)}$ be a smooth one parameter family of Riemannian manifolds, und $u: U \times(0,T) \to \mathbb{R}$ a smooth function. 
We define for $t\in(0,T)$,
\begin{equation*}
v(t):=|\nabla^{g(t)}u|^2_{g(t)}-u+t^2\R_{g(t)}+2t\tr_{g(t)}\Ob(t).
\end{equation*}
\end{defn}

\begin{rk}
Each quantity $|\nabla^{g(t)}u|_{g(t)}^2-u+t^2\R_{g(t)}$ and $\tr_{g(t)}\Ob(t)$ vanishes on an expanding gradient Ricci soliton with soliton potential function $u/t,$ in the case that  $(M,g(t))_{t\in (0,T)}$ is a smooth Ricci flow. 
\end{rk}

\begin{prop}\label{prop-evo-eqn-v}
 Let $(M,g(t))_{t\in (0,T)}$ be a smooth solution to Ricci flow, and $u: U \times (0,T) \to \mathbb{R}$ a smooth solution to $\partt u = \lap_{g(t)} u -\frac n 2$. The function $v(t)$ satisfies the following non-homogeneous heat equation along the Ricci flow: 
\begin{equation*}
(\partial_t-\Delta_{g(t)})v(t)=-2|\Ob(t)|^2_{g(t)}.
\end{equation*}

\end{prop}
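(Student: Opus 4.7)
The plan is to compute $(\partial_t - \Delta_{g(t)})$ applied to each of the four summands in $v(t)$ and then algebraically collapse the answer using the definition of $\Ob$.

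First I would compute the evolution of $|\nabla^{g(t)} u|^2_{g(t)}$. Since $\partial_t g^{ij} = 2\Ric(g)^{ij}$ along Ricci flow and $u$ solves the prescribed heat equation, a direct differentiation gives
\begin{equation*}
\partial_t |\nabla u|^2 = 2\Ric(g)(\nabla u,\nabla u) + 2\langle \nabla \Delta u, \nabla u\rangle,
\end{equation*}
while the Bochner identity reads $\Delta |\nabla u|^2 = 2|\nabla^2 u|^2 + 2\langle \nabla \Delta u,\nabla u\rangle + 2\Ric(g)(\nabla u,\nabla u)$. Subtracting yields the familiar $(\partial_t - \Delta)|\nabla u|^2 = -2|\nabla^2 u|^2$. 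Since $(\partial_t - \Delta) u = -n/2$ and the standard Ricci flow identity $(\partial_t - \Delta)\R_{g(t)} = 2|\Ric(g(t))|^2$ gives $(\partial_t - \Delta)(t^2 \R_{g(t)}) = 2t\R_{g(t)} + 2t^2|\Ric(g(t))|^2$, only the last summand $2t\tr_{g(t)}\Ob(t)$ requires the previous proposition. By Proposition \ref{prop-eqn-evo-obs-tensor}, $\partial_t \tr_{g(t)}\Ob = \Delta \tr_{g(t)}\Ob + 2\langle\Ric(g(t)),\Ob(t)\rangle$, so
\begin{equation*}
(\partial_t - \Delta)(2t\tr_{g(t)}\Ob) = 2\tr_{g(t)}\Ob + 4t\langle\Ric(g(t)),\Ob(t)\rangle.
\end{equation*}

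Next I would collect terms and eliminate $\nabla^2 u$ in favour of $\Ob$. Adding the four contributions, one obtains
\begin{equation*}
(\partial_t - \Delta) v = -2|\nabla^2 u|^2 + \tfrac{n}{2} + 2t\R_{g(t)} + 2t^2|\Ric(g(t))|^2 + 2\tr_{g(t)}\Ob + 4t\langle\Ric(g(t)),\Ob(t)\rangle.
\end{equation*}
The key observation is that $\tfrac12 \Li_{\nabla u} g = \nabla^{g(t),2}u$, so the definition of $\Ob$ reads $\nabla^{g(t),2} u = \Ob(t) + t\Ric(g(t)) + \tfrac{g(t)}{2}$. Expanding $|\nabla^2 u|^2$ in this decomposition gives
\begin{equation*}
|\nabla^2 u|^2 = |\Ob|^2 + t^2|\Ric|^2 + \tfrac{n}{4} + 2t\langle \Ob,\Ric\rangle + \tr_{g(t)}\Ob + t\R_{g(t)},
\end{equation*}
where all norms and inner products are with respect to $g(t)$. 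Substituting this identity into the displayed expression for $(\partial_t - \Delta)v$ cancels every term except $-2|\Ob(t)|^2_{g(t)}$, which is precisely the claim.

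The computation is essentially algebraic once one assembles the three building blocks (Bochner, the scalar curvature evolution, and the trace equation from Proposition \ref{prop-eqn-evo-obs-tensor}). There is no genuine analytic obstacle; the only delicate step is the bookkeeping in the final substitution, where the cross terms $4t\langle\Ric,\Ob\rangle$ and $2\tr_{g(t)}\Ob$ coming from $-2|\nabla^2 u|^2$ must exactly match the corresponding terms produced by the evolution of $2t\tr_{g(t)}\Ob$, and where $2t^2|\Ric|^2$ and $2t\R_{g(t)}$ must cancel the contributions from $-2|\nabla^2 u|^2$ and $(\partial_t - \Delta)(t^2\R_{g(t)})$.
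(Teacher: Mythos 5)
Your proposal is correct; the algebra checks out (in particular the expansion of $|\nabla^2 u|^2 = |\Ob + t\Ric + \tfrac{g}{2}|^2$ and the resulting cancellations). The paper's proof uses the same three building blocks — the Bochner computation giving $(\partial_t-\Delta)|\nabla u|^2 = -2|\nabla^2 u|^2$, the scalar curvature evolution, and the traced evolution of $\Ob$ from Proposition \ref{prop-eqn-evo-obs-tensor} — with the only difference being bookkeeping: the paper groups $|\nabla u|^2 - u$ and writes the Hessian contribution as $\nabla^2 u - \tfrac{g}{2} = \Ob + t\Ric$, whereas you expand $\nabla^2 u$ itself as a three-term sum, but this is the same argument.
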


\begin{proof}
The difference $|\nabla^{g(t)}u|^2_{g(t)}-u$ satisfies:
\begin{equation*}
\begin{split}
\left(\partial_t-\Delta_{g(t)}\right)\left(|\nabla^{g(t)}u|^2_{g(t)}-u\right)&=-2\left|\nabla^{g(t),\,2}u-\frac{g(t)}{2}\right|^2_{g(t)}-2\partial_tu\\
&=-2\left|\nabla^{g(t),\,2}u-\frac{g(t)}{2}\right|^2_{g(t)}-2\tr_{g(t)}\left(\nabla^{g(t),\,2}u-\frac{g(t)}{2}\right).
\end{split}
\end{equation*}
By Proposition \ref{prop-eqn-evo-obs-tensor}, 
\begin{equation*}
\begin{split}
&\left(\partial_t-\Delta_{g(t)}\right)\left(|\nabla^{g(t)}u|^2_{g(t)}-u+2t\tr_{g(t)}\Ob(t)\right)\\
&=-2\left|\nabla^{g(t),\,2}u-\frac{g(t)}{2}\right|^2_{g(t)}-2\tr_{g(t)}\left(\nabla^{g(t),\,2}u-\frac{g(t)}{2}\right)\\
&\quad+2\tr_{g(t)}\Ob+4t\langle\Ric(g(t)),\Ob(t)\rangle_{g(t)}\\
&=-2\left|\Ob(t)+t\Ric(g(t))\right|^2_{g(t)}+4t\langle\Ric(g(t)),\Ob(t)\rangle_{g(t)}-2t\R_{g(t)}\\
&=-2|\Ob(t)|^2_{g(t)}-2t^2|\Ric(g(t))|_{g(t)}^2-2t\R_{g(t)},
\end{split}
\end{equation*}
by the very definition of $\Ob$. Since $(\partial_t-\Delta_{g(t)})\R_{g(t)}=2|\Ric(g(t))|^2_{g(t)}$, the expected computation follows.
\end{proof}

Here is a useful formula which mimics the corresponding computation on an  expanding gradient Ricci soliton and which does not depend on the equation satisfied by the solution $u(t)$:
\begin{lemma}\label{lemma-gen-form-exp-ric-pin}
Let $(M^n,g(t))_{t\in(0,T)}$ be a smooth one parameter family of Riemannian manifolds  and let $u:U\times(0,T)\rightarrow \mathbb{R}$ be a smooth  function and let $\Ob: M \times (0,T) \rightarrow  \mathbb{R}$ be the corresponding obstruction tensor.   Then the following holds true pointwise on $U$:
\begin{equation}
d\,(t\R_{g(t)})+2\Ric(g(t))(\nabla^{g(t)}u(t),\,\cdot\,)=2\left(\div_{g(t)}-d\tr_{g(t)}\right)\Ob(t).\label{beautiful-covid19}
\end{equation}
Moreover,  
\begin{equation}
  \begin{split}
  d(v(t))& = d\left(t^2\R_{g(t)}+|\nabla^{g(t)}u|^2_{g(t)}-u(t)+2t\tr_{g(t)}\Ob(t)\right)\\
& =2\left(t\div_{g(t)}\Ob(t)+\Ob(t)(\nabla^{g(t)}u(t),\,\cdot\,)\right).\label{beautiful-covid19-bis}
\end{split}
\end{equation}
\end{lemma}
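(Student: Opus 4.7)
Both identities are algebraic consequences of three standard pointwise formulas, so the strategy is to record those formulas first and then combine them. Recall that since the Levi-Civita connection is torsion-free and $\nabla u$ is a gradient, $\tfrac12 \Li_{\nabla u} g = \nabla^{g,2} u$, so the obstruction tensor is simply
\begin{equation*}
\Ob(t) = \nabla^{g(t),2} u - t\Ric(g(t)) - \tfrac12 g(t).
\end{equation*}

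The three ingredients I would assemble are: (i) the twice-contracted second Bianchi identity $2\div_{g}\Ric(g) = d\,\R_g$; (ii) the Bochner-type commutation formula $\div_{g}(\nabla^{g,2} u) = d(\Delta_g u) + \Ric(g)(\nabla^g u,\,\cdot\,)$; and (iii) the elementary identity $\tfrac12 d\bigl(|\nabla^g u|_g^2\bigr) = \nabla^{g,2} u (\nabla^g u,\,\cdot\,)$. Taking the divergence of the defining expression for $\Ob(t)$ and using (i), (ii) (and $\div_g g = 0$) gives
\begin{equation*}
\div_{g(t)}\Ob(t) = d(\Delta_{g(t)} u) + \Ric(g(t))(\nabla^{g(t)} u,\,\cdot\,) - \tfrac{t}{2}\, d\,\R_{g(t)}.
\end{equation*}
On the other hand $\tr_{g(t)}\Ob(t) = \Delta_{g(t)} u - t\R_{g(t)} - \tfrac{n}{2}$, so (treating $t$ as a fixed parameter for the exterior derivative in the spatial variables) $d\tr_{g(t)}\Ob(t) = d(\Delta_{g(t)} u) - t\, d\,\R_{g(t)}$. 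Subtracting these two identities gives the first claim \eqref{beautiful-covid19} immediately.

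For the second identity I would compute $d v(t)$ term by term. Writing $\nabla^{g,2} u = \Ob + t\Ric + \tfrac12 g$ in (iii) yields
\begin{equation*}
d\bigl(|\nabla^{g(t)}u|_{g(t)}^2 - u\bigr) = 2\Ob(t)(\nabla^{g(t)}u,\,\cdot\,) + 2t\Ric(g(t))(\nabla^{g(t)}u,\,\cdot\,).
\end{equation*}
Now multiply \eqref{beautiful-covid19} by $t$ to rewrite $2t\Ric(g(t))(\nabla^{g(t)}u,\cdot)$ in terms of $\div_{g(t)}\Ob$, $d\tr_{g(t)}\Ob$ and $t^2 d\,\R_{g(t)}$; adding $d(t^2 \R_{g(t)}) + 2t\, d\tr_{g(t)}\Ob(t)$ causes the $t^2 d\,\R_{g(t)}$ and $d\tr_{g(t)}\Ob$ contributions to cancel, leaving exactly \eqref{beautiful-covid19-bis}.

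I do not expect a genuine obstacle here: everything reduces to contracted Bianchi and Bochner identities, and the only book-keeping subtlety is remembering that $t$ is a parameter (so $d$ commutes with multiplication by $t$) and that the symmetries of $\Ric$ and $\Ob$ let one freely swap the two arguments when pairing with $\nabla^g u$. The mildly delicate step is the cancellation of the $d\tr_{g(t)}\Ob$ and $d\,\R_{g(t)}$ terms in the second identity, which is the whole reason the definition of $v(t)$ in Definition \ref{defn-v} is tailored to include the correction $2t\tr_{g(t)}\Ob(t)$ together with $t^2 \R_{g(t)}$.
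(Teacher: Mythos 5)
Your proof is correct and follows essentially the same route as the paper: it combines the twice-contracted Bianchi identity with the Bochner commutation formula for $\div(\nabla^2 u)$ to obtain \eqref{beautiful-covid19}, then derives \eqref{beautiful-covid19-bis} by expressing $2\Ob(\nabla u,\cdot) = d(|\nabla u|^2-u) - 2t\Ric(\nabla u,\cdot)$ and substituting the $t$-multiple of \eqref{beautiful-covid19}. The only cosmetic difference is that you work directly with $\nabla^{g,2}u$ rather than $\tfrac12\Li_{\nabla u}g$, which is a harmless rewriting and arguably cleaner.
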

\begin{rk}
The advantage of Proposition \ref{prop-evo-eqn-v} over [\eqref{beautiful-covid19-bis}, Lemma \ref{lemma-gen-form-exp-ric-pin}] is that the function $\R_g+|\nabla^gu|^2_g-u+2\tr_g\Ob$ depends on $\Ob$ in an integral sense by Duhamel's principle. On the other hand, [\eqref{beautiful-covid19-bis}, Lemma \ref{lemma-gen-form-exp-ric-pin}] involves space derivatives only compared to Proposition \ref{prop-evo-eqn-v}.
\end{rk}
\begin{proof}
By using the traced version of the Bianchi identity, we compute as follows:
\begin{equation*}
\begin{split}
\left(\div_{g(t)}-d\tr_{g(t)}\right)\left(t\Ric(g(t))+\frac{g(t)}{2}\right)=-\frac{t}{2}d\,\R_{g(t)},
\end{split}
\end{equation*}
where we have used the fact that $g(t)$ is parallel.
Now, 
\begin{equation*}
\left(\div_{g(t)}-\frac{1}{2}d\tr_{g(t)}\right)\Li_{\nabla^{g(t)}u(t)}(g(t))= d\left(\Delta_{g(t)}u(t)\right)+2\Ric(g(t))(\nabla^{g(t)}u(t),\,\cdot\,),
\end{equation*}
where we have used the Bochner formula 
\begin{equation*}
\div_{g(t)}\nabla^{g(t),\,2}u(t)=d(\Delta_{g(t)}u(t))+\Ric(g(t))(\nabla^{g(t)} u(t),\,\cdot\,),
\end{equation*}
to commute the derivatives. In particular we conclude that
\begin{equation*}
\begin{split}
&\left(\div_{g(t)}-d\tr_{g(t)}\right) \Ob(t) =\left(\div_{g(t)}-d\tr_{g(t)}\right)\left(-t\Ric(g(t))-\frac{g(t)}{2}+\frac{1}{2}\Li_{\nabla^{g(t)}u(t)}(g(t))\right)=\\
&\frac{t}{2}d\,\R_{g(t)}+\frac{1}{2}\left(d\left(\Delta_{g(t)}u(t)\right)+2\Ric(g(t))(\nabla^{g(t)}u(t),\,\cdot\,)\right)-\frac{1}{4}d\left(\tr_{g(t)}\Li_{\nabla^{g(t)}u(t)}(g(t))\right)  =\\
\,&\frac{t}{2}d\,\R_{g(t)}+\Ric(g(t))(\nabla^{g(t)}u(t),\,\cdot\,),
\end{split}
\end{equation*}
as expected. 

To prove (\ref{beautiful-covid19-bis}), observe that 
\begin{equation*}
\begin{split}
2\Ob(t)(\nabla^{g(t)}u(t),\,\cdot\,)=\,&(\Li_{\nabla^{g(t)}u(t)}(g(t))-2t\Ric(g(t))-g(t))(\nabla^{g(t)}u(t),\,\cdot\,)\\
=\,&d\left(|\nabla^{g(t)}u(t)|^2_{g(t)}-u(t)\right)-2t\Ric(g(t))(\nabla^{g(t)}u(t),\,\cdot\,),
\end{split}
\end{equation*}
by the very definition of $\Ob(t)$. Then \eqref{beautiful-covid19} together with the previous observation leads to the proof of \eqref{beautiful-covid19-bis}.
\end{proof}

\section{Pointwise interior estimates}\label{section-pt-int-est}

\noindent In section \ref{sec-pt-dec-sol} we will  take a limit of the sequence of solutions $(u^i)_{i\in \N}$ to \eqref{eqn-heat-sing-bis-rep} and Ricci flows $g_i(t)$ from Section \ref{section-can-fct}.
To obtain a limit, and to study the limit effectively, it will be necessary 
to prove estimates for $u^i$ and $g_i$. This is the content of the next two sections.
The results are  presented in a more general setting, but ultimately (in Section \ref{sec-pt-dec-sol})   the solutions $g(t)$   and $u(t)$ and $u(0)$ of this section will correspond to a solution  $g_i(t)$ and $u^i(t)$, with $u^i(0) =u_i$ from section \ref{section-can-fct}, where $g(t)$ is a solution coming out of a   
cone as in Theorem \ref{main-I}.  Let us start with standard interior estimates on the covariant derivatives of $u$:
\begin{prop}\label{prop-grad-bd-a-priori}
Let $(M,g(t))_{t\in [0,T)}$ be a smooth, complete, connected solution 
to Ricci flow, satisfying the basic assumption  \eqref{eq:basic_ass}   and $u:B_{g_0}(o,4) \times [0,T) \to \mathbb{R}$ a smooth solution to $\partt u = \lap_{g(t)} u -\frac n 2$ satisfying  
$$\sup_{B_{g_0}(o,4)}|\grad u_0|_{g_0}^2  +  \sup_{B_{g_0}(o,4) \times [0,T) } |u|^2 \leq A.$$
Then, for all $r_0\in(0,4)$,  for all $k\in \N_0,$ there exists $C_k=C(n,k,r_0,D_0,A) \in (0,\infty)$ such that  
the covariant derivatives of $u$  satisfy 
\begin{equation}\label{loc-upper-bd-grad-u}
\begin{split}
&|\nabla^{g(t)} u(t)|_{g(t)} \leq C_1,  \ \ \text{on $B_{g_0}(o,r_0)\times (0,T)$,} \\
&|\nabla^{g(t),\,k}u(t)|_{g(t)}\leq \frac{C_k}{t^{\frac{k-1}{2}}}  \ \  \text{on $B_{g_0}(o,r_0)\times (0,T)$, for $k\geq 2$, }
\end{split}
\end{equation}
and the covariant derivatives of the obstruction tensor satisfy 
\begin{equation}\label{loc-upper-bd-grad-ob}
|\nabla^{g(t),\,k}\Ob(t)|_{g(t)}\leq \frac{C_k}{t^{\frac{k+1}{2}}},\quad\text{on $B_{g_0}(o,r_0)\times (0,T)$ for $k\in \N_0$.}
\end{equation}
\end{prop}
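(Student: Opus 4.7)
Because $u$ solves the linear heat equation $\partial_t u = \Delta_{g(t)}u - n/2$ along the Ricci flow, a direct Bochner computation yields the clean identity
\begin{equation*}
(\partial_t - \Delta_{g(t)}) |\nabla^{g(t)}u|^2_{g(t)} = -2|\nabla^{g(t),\,2}u|^2_{g(t)},
\end{equation*}
the $\Ric(\nabla u, \nabla u)$ contribution from $\partial_t g^{-1}$ cancelling exactly the one produced by Bochner's formula. Since $\Ric(g(t)) \geq 0$ gives $d_{g(t)} \leq d_{g_0}$, any cutoff $\phi \in C^\infty_c(B_{g_0}(o,4))$ with $\phi \equiv 1$ on a ball containing $B_{g_0}(o,r_0)$ remains a valid spatial cutoff for every later $g(t)$. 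My plan is to apply the parabolic maximum principle to the Bernstein quantity $F := \phi^2|\nabla^{g(t)}u|^2_{g(t)} + Au^2$, using the auxiliary identity $(\partial_t - \Delta_{g(t)})u^2 = -2|\nabla^{g(t)}u|^2_{g(t)} - nu$: for $A$ large enough in terms of $\sup |u|$, $\sup |\nabla\phi|$ and $n$, the negative $-2A|\nabla u|^2$ contribution absorbs the cutoff cross term $\phi\langle\nabla\phi,\nabla|\nabla u|^2\rangle$ via Cauchy--Schwarz, giving $|\nabla^{g(t)}u|_{g(t)} \leq C_1$ on $B_{g_0}(o,r_0)\times(0,T)$.

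\textbf{Step 2 (Higher derivatives of $u$).} For $k \geq 2$ I would proceed by a Shi-type Bernstein induction. Commuting $\nabla^{g(t),\,k}$ with $\partial_t-\Delta_{g(t)}$ along Ricci flow yields schematically
\begin{equation*}
(\partial_t - \Delta_{g(t)})\nabla^{g(t),\,k}u = \sum_{i+j = k-1}\nabla^{g(t),\,i}\Rm(g(t)) \ast \nabla^{g(t),\,j+1}u,
\end{equation*}
hence $(\partial_t - \Delta_{g(t)})|\nabla^{g(t),\,k}u|^2_{g(t)} \leq -2|\nabla^{g(t),\,k+1}u|^2_{g(t)} + \text{(l.o.t.)}$. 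Combined with Shi's curvature estimates $|\nabla^{g(t),\,j}\Rm|_{g(t)} \leq C_j/t^{(j+2)/2}$ on slightly larger balls (which follow from $|\Rm|\leq D_0/t$) and the inductive bounds on lower derivatives of $u$, the maximum principle applied to the Bernstein quantity
\begin{equation*}
G_k := t^{k-1}\phi_k^2|\nabla^{g(t),\,k}u|^2_{g(t)} + B_k\, t^{k-2}|\nabla^{g(t),\,k-1}u|^2_{g(t)},
\end{equation*}
on a sequence of nested cutoffs $\phi_k$ shrinking toward $B_{g_0}(o,r_0)$, gives the claimed bound $|\nabla^{g(t),\,k}u|_{g(t)} \leq C_k/t^{(k-1)/2}$, provided $B_k$ is chosen large enough (depending on $\sup|\nabla\phi_k|$, $T$ and the previous constants) so that the negative term $-2B_k t^{k-2}|\nabla^{g(t),\,k}u|^2$ produced by the $|\nabla^{g(t),\,k-1}u|^2$ piece absorbs both the cutoff cross terms and the schematic lower-order contributions from the curvature forcing.

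\textbf{Step 3 (Obstruction tensor) and main obstacle.} The bound on $\Ob$ is then purely algebraic. Since $\nabla^{g(t)}g(t)=0$, for $k\geq 1$ one has $\nabla^{g(t),\,k}\Ob = \nabla^{g(t),\,k+2}u - t\,\nabla^{g(t),\,k}\Ric(g(t))$, and Shi's estimates give $t|\nabla^{g(t),\,k}\Ric|_{g(t)} \leq C_k/t^{k/2}$, which is dominated by the bound $|\nabla^{g(t),\,k+2}u|_{g(t)} \leq C_k/t^{(k+1)/2}$ from Step 2; the case $k=0$ is immediate from $|\Ob|_{g(t)}\leq |\nabla^{g(t),\,2}u|_{g(t)}+t|\Ric|_{g(t)}+\tfrac12$. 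The genuinely delicate step is Step 2: the $t$-exponents in the Bernstein weights $t^{k-1}$ and $t^{k-2}$ have to be matched exactly against the singular curvature rates from Shi, so that every interaction term (coming from $\nabla\phi_k$, from the singular factor $|\Rm|\leq D_0/t$, or from the induction hypothesis) is absorbed into the favourable negative Hessian term. The fact that one has only $L^\infty$ control on $u$ (rather than on its higher derivatives) is precisely what forces the appearance of the auxiliary $Au^2$ in Step 1 and the cascade of nested cutoffs and mixed Bernstein weights in Step 2.
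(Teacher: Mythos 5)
The Bernstein/Shi cascade you propose is the right type of argument, and it is essentially what the paper invokes via the citation to \cite[Theorems~2.1 and~2.2]{DSS-1}. Step~3 (reading off the $\Ob$-bound from the $u$-bounds and Shi's curvature estimates) is correct and matches the paper. The gap is in Steps~1 and~2: you use a \emph{time-independent} spatial cutoff $\phi$, and the observation that $d_{g(t)}\leq d_{g_0}$ only controls the \emph{support} of $\phi$, not the size of $\nabla^{g(t)}\phi$. Under Ricci flow with $\Ric(g(t))\geq 0$ the inverse metric is nondecreasing, so $|\nabla^{g(t)}\phi|^2_{g(t)}$ is \emph{increasing} in $t$, and quantitatively
\begin{equation*}
\partial_t\log |\nabla^{g(t)}\phi|^2_{g(t)}=\frac{2\Ric(g(t))\bigl(\nabla^{g(t)}\phi,\nabla^{g(t)}\phi\bigr)}{|\nabla^{g(t)}\phi|^2_{g(t)}}\leq \frac{2nD_0}{t}.
\end{equation*}
Since $\int_0^t s^{-1}\,ds$ diverges, there is no bound on $\sup_{t\in(0,T)}|\nabla^{g(t)}\phi|_{g(t)}$ (nor on $|\Delta_{g(t)}\phi|$) depending only on $n,r_0,D_0$. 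Hence the $A$ you need to absorb the cutoff cross terms in Step~1, and the constants $B_k$ in Step~2, are not controlled by $(n,k,r_0,D_0,A)$. This is not a cosmetic issue: the Proposition is applied in Section~\ref{sec-pt-dec-sol} to the translated flows $g_i(t)=g(t+t_i)$ with $t_i\searrow 0$, for which $|\Rm(g_i(0))|\leq D_0/t_i\to\infty$, so constants depending on the metric at time zero beyond $D_0$ blow up and the estimates become vacuous in the limit. The way the paper handles exactly this is via the time-dependent Perelman cut-off functions of Lemma~\ref{lemma-Cutoff-Perelman}: built from $d_{g(t)}$, they satisfy $|\nabla^{g(t)}\eta|^2_{g(t)}\leq V\eta$ with $V$ uniform, are subsolutions of $\partial_t-\Delta_{g(t)}$, and have $\partial_t\eta\geq -V/\sqrt{t}$; these are precisely the properties that make a Bernstein argument uniform under the $|\Rm(g(t))|\leq D_0/t$ hypothesis. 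A secondary (repairable) issue in Step~2: the second term $B_k\,t^{k-2}|\nabla^{g(t),\,k-1}u|^2_{g(t)}$ in $G_k$ carries no cutoff, so its heat operator produces a term $(k-2)t^{k-3}|\nabla^{g(t),\,k-1}u|^2_{g(t)}$ of order $t^{-1}$ that is not absorbed by the displayed structure, and $G_k$ does not vanish on the lateral boundary; the standard Shi-type fix is to localize \emph{both} terms and to tune the $t$-weights so that the $\partial_t$-derivative of the weight is traded against the good negative term coming from the evolution of the lower-order piece. With Perelman cutoffs in place and those weights corrected, your cascade goes through and recovers the stated estimates.
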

\begin{proof}
Since the solutions $u^i$ are uniformly locally bounded in space and time thanks to Proposition \ref{ptwise-bd-ui}, the proof of \eqref{loc-upper-bd-grad-u} is standard: see for instance the proofs of \cite[Theorems $2.1$ and $2.2$]{DSS-1} for $k=1$ and $k=2$. The proofs for $k\geq 3$ follow analogously. The power $(k-1)/2$ comes from the fact that $|\nabla^{g(t)}u(t)|_{g(t)}$ is locally bounded in space uniformly in time.

The proof of \eqref{loc-upper-bd-grad-ob} uses the previously established interior bounds on $\nabla^{g(t),k}u(t)$ together with Shi's interior bounds on the curvature $|\nabla^{g(t),k}\Ric(g(t))|_{g(t)}\leq C_kt^{-(k+2)/2}$ for $k\geq 0$ and the definition of $\Ob$ from Section \ref{sec-equations}.   
 \end{proof}

We are now in a position to state faster than polynomial decay on the $L^2_{loc}$ norm of $v_+$.
\begin{lemma}[Upper bound on $v$]\label{lemma-L2-v-plus}
Under the same assumptions as  Proposition \ref{prop-grad-bd-a-priori} with the further assumption 
that $$\int_{B_{g_0}(o,3)} v^2(0)\,d\mu_{g_0} \leq \varepsilon \leq 1,$$ and  for $B_{g_0}(p,2r)\subset B_{g_0}(o,3)$ and each $k\geq 1$, there exists $C_k=C(n,k,r,D_0,A) \in (0,\infty)$ such that for $t \in (0,\min\{T,B(n) 2^{-2k}r^2/D_0^2\})$ where $ B= B(n)\in (0,\infty) $ is a uniform positive constant,
\begin{equation*}
\int_{B_{g_0}(p,2^{-k}r)}v_+^2(t)\,d\mu_{g(t)}\leq \int_{B_{g(t)}(p,2^{-k}r)}v_+^2(t)\,d\mu_{g(t)} \leq C_k(t^{k}+\varepsilon).
\end{equation*}

\end{lemma}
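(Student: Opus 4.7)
By Proposition~\ref{prop-evo-eqn-v}, $(\partial_t-\Delta_{g(t)})v=-2|\Ob|^2\leq 0$, so $v$ is a subsolution of the heat equation along the flow. Combined with the non-negativity of the scalar curvature and Ricci curvature, an iterated Caccioppoli argument on nested balls will gain one factor of $t$ per step.

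The first inequality is immediate: since $\Ric\geq 0$, Proposition~\ref{prop:RF_basics} gives $d_{g(t)}\leq d_{g_0}$, whence $B_{g_0}(p,2^{-k}r)\subset B_{g(t)}(p,2^{-k}r)$. For the second, I will prove by induction on $k\geq 0$ the statement
\begin{equation*}
(\mathcal{H}_k):\ \int_{B_{g_0}(p,2^{-k}r)} v_+^2(s)\,d\mu_{g(s)}\leq C_k(s^k+\varepsilon)\quad \text{ for } s\in(0,T_k),
\end{equation*}
with $T_k:=\min\{T,B(n)\,4^{-k}r^2/D_0^2\}$. The base case $\mathcal{H}_0$ reduces to a pointwise bound on $v_+$ on $B_{g_0}(p,r)$: each of the four constituents $|\nabla u|^2,\ u,\ s^2\R_{g(s)},\ s\,\tr_{g(s)}\Ob$ of $v$ is uniformly bounded there by Proposition~\ref{prop-grad-bd-a-priori} together with $|\Rm|\leq D_0/s$, and volumes of balls are uniformly bounded by Bishop--Gromov since $\Ric\geq 0$.

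For the inductive step $\mathcal{H}_k\Rightarrow\mathcal{H}_{k+1}$, fix $t\in(0,T_{k+1})$ and introduce the time-independent cutoff $\phi(x):=\eta(d_{g(t)}(p,x))$, where $\eta$ is smooth and non-increasing, equal to $1$ on $[0,2^{-(k+1)}r]$, vanishing outside $[0,2^{-k}r-C\sqrt{t}]$, with $|\eta'|\leq C\cdot 2^{k+1}/r$. Three monotonicity features, all rooted in $\Ric\geq 0$, make $\phi$ effective uniformly in $s\in[0,t]$: (a) the distance distortion $d_{g_0}\leq d_{g(t)}+C\sqrt{t}$ combined with the restriction $t\leq T_{k+1}$ forces $\supp\phi\subset B_{g_0}(p,2^{-k}r)$, where $\mathcal{H}_k$ applies; (b) $d_{g(t)}\leq d_{g_0}$ ensures $\phi\equiv 1$ on $B_{g_0}(p,2^{-(k+1)}r)$; (c) $g(s)\geq g(t)$ for $s\leq t$ yields $|\nabla^{g(s)}\phi|^2_{g(s)}\leq |\nabla^{g(t)}\phi|^2_{g(t)}=|\eta'(d_{g(t)})|^2\leq C^2\cdot 4^{k+1}/r^2$ a.e. Testing $(\partial_s-\Delta_{g(s)})v\leq 0$ against $2v_+\phi^2$, integrating over $M\times(0,t)$, using that $\partial_s d\mu_{g(s)}=-\R_{g(s)}d\mu_{g(s)}$ with $\R\geq 0$ produces a term of favorable sign that may be dropped, and applying integration by parts together with Cauchy--Schwarz in the standard way yields
\begin{equation*}
\int_M \phi^2 v_+^2(t)\,d\mu_{g(t)}\leq \int_M \phi^2 v_+^2(0)\,d\mu_{g_0}+C\int_0^t\int_M v_+^2\,|\nabla^{g(s)}\phi|^2_{g(s)}\,d\mu_{g(s)}\,ds.
\end{equation*}
The first term is bounded by $\int v^2(0)\,d\mu_{g_0}\leq\varepsilon$. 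The second, by $\mathcal{H}_k$ and the gradient bound, is bounded by $C\cdot 4^{k+1}C_k r^{-2}\int_0^t(s^k+\varepsilon)\,ds\leq C_{k+1}(t^{k+1}+t\varepsilon)$; the restriction $t\leq T_{k+1}$ forces $t\cdot 4^{k+1}/r^2\lesssim 1$, so $t\varepsilon$ is absorbed into $\varepsilon$. Restricting to $\{\phi=1\}\supset B_{g_0}(p,2^{-(k+1)}r)$ delivers $\mathcal{H}_{k+1}$.

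\textbf{Main obstacle.} The delicate point is the uniform control of $|\nabla^{g(s)}\phi|_{g(s)}$ for $s\in[0,t]$: because $|\Ric|\leq D_0/s$ is not integrable as $s\downarrow 0$, the Ricci flow metrics are \emph{not} uniformly equivalent on $(0,T)$, and a cutoff defined through $d_{g_0}$ need not have bounded gradient in $g(s)$. Basing $\phi$ on the final-time distance $d_{g(t)}$ and invoking the monotonicity $g(s)\geq g(t)$ afforded by $\Ric\geq 0$ is what side-steps this obstruction and closes the iteration.
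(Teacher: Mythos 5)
Your proof is correct, and it takes a genuinely different route from the paper. Both arguments rest on the same two pillars — that $(\partial_t-\Delta_{g(t)})v\leq 0$ by Proposition~\ref{prop-evo-eqn-v}, and an iterated Caccioppoli estimate on nested balls that gains one power of $t$ per step — but the cut-off functions are constructed very differently. The paper uses the time-dependent Perelman-type cut-off of Lemma~\ref{lemma-Cutoff-Perelman}, whose defining feature is the barrier subsolution property $\partial_t\eta\leq\Delta_{g(t)}\eta$; this lets the boundary and time-derivative terms be absorbed cleanly, at the cost of invoking a non-trivial construction that itself uses the $D_0/t$ curvature hypothesis via \eqref{cond-Cutoff-Perelman}. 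You instead take a \emph{time-independent} Lipschitz cut-off $\phi$ built from the distance $d_{g(t)}$ at the terminal time $t$, and exploit three elementary monotonicity facts: the metric monotonicity $g(s)\geq g(t)$ for $s\leq t$ (since $\Ric\geq 0$), giving the uniform gradient bound $|\nabla^{g(s)}\phi|^2_{g(s)}\leq|\nabla^{g(t)}\phi|^2_{g(t)}$; the distance monotonicity $d_{g(t)}\leq d_{g_0}$; and the distortion estimate $d_{g_0}\leq d_{g(t)}+C\sqrt{t}$ from Proposition~\ref{prop:RF_basics}(1). Your identification of the main obstacle — a $d_{g_0}$-based cut-off would have uncontrolled $g(s)$-gradient because the metrics are not uniformly comparable as $s\downarrow 0$ — is exactly right, and anchoring $\phi$ to the final-time distance is a clean way to sidestep it. The net effect is an argument with essentially the same iteration scheme but a more elementary cut-off, trading the Perelman construction for the monotonicity of $g(\cdot)$; the paper's approach, on the other hand, keeps the cut-off machinery uniform with other parts of the paper that genuinely need the subsolution property (e.g.\ Lemma~\ref{lemma-mean-value-v-minus}). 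Two small points to tighten if you were to write this out in full: the test function $2v_+\phi^2$ is only Lipschitz (as is $\phi$), so the integration by parts should be phrased for $W^{1,2}$ subsolutions, and the identity $\nabla v\cdot\nabla v_+=|\nabla v_+|^2$ a.e.\ should be recorded; and in the base case you should note, as you do implicitly, that the uniform bound on $v_+$ from Proposition~\ref{prop-grad-bd-a-priori} together with Bishop--Gromov ($\Ric\geq 0$) gives $\mathcal{H}_0$ with a constant independent of $s$.
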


\begin{proof}
According to Proposition \ref{prop-evo-eqn-v}, $\left(\partial_t-\Delta_{g(t)}\right)v\leq 0$ on $B_{g_0}(o,3)\times[0,T)$. In particular, $v_+^2$ satisfies in the weak sense $(\partial_t-\Delta_{g(t)})v_+^2\leq -2|\nabla^{g(t)}v_+|_{g(t)}^2$ on $B_{g_0}(o,3)\times[0,T)$. Then multiplying across this inequality by $\eta$ where $\eta$ is a Perelman type cut-off function with respect to $p$ and $r_1:=\alpha r$ and $r_2:=\beta r$ (see Lemma \ref{lemma-Cutoff-Perelman}) with $0<\alpha<\beta \leq 1$, an integration by parts gives:
\begin{equation}
\begin{split}\label{alpha-beta-gal-ineq}
\int_M\eta v_+^2\,d\mu_{g(s)}\bigg\vert_{0}^t&= \int_0^t\int_M\left(\partial_s-\R_{g(s)}\right)\eta v_+^2\,d\mu_{g(s)}ds \\
&\leq \int_0^t\int_M\left(\Delta_{g(s)}\eta\right)v_+^2+\eta\partial_sv_+^2\,d\mu_{g(s)}ds \\
&\leq\int_0^t\int_M-2g(s)(\nabla^{g(s)}\eta,\nabla^{g(s)}v_+) v_++\eta\partial_sv_+^2\,d\mu_{g(s)}ds \\
&\leq\int_0^t\int_M-2\eta|\nabla^{g(s)}v_+|^2_{g(s)}-4g(s)(\nabla^{g(s)}\eta,\nabla^{g(s)}v_+) v_+\,d\mu_{g(s)}ds\\
&\leq 2\int_0^t\int_M\frac{|\nabla^{g(s)}\eta|^2_{g(s)}}{\eta}v_+^2\,d\mu_{g(s)}ds\\
&\leq C(n,\alpha,\beta,r)\int_0^t\int_{B_{g(s)}(p,\beta r)}v_+^2\,d\mu_{g(s)}ds.
\end{split}
\end{equation}
In particular, if $\alpha=1/2$ and $\beta=1$ then:
\begin{equation*}
\begin{split}\label{feed-me-v}
\int_{B_{g_0}(p,r/2)}v_+^2\,d\mu_{g(t)}&\leq\int_{B_{g(t)}(p,r/2)}v_+^2\,d\mu_{g(t)}\leq 
C(n,r) \int_M\eta v_+^2\,d\mu_{g(t)}\\
& \leq C(n,r)\int_0^t\int_{B_{g(s)}(p,r)}v_+^2\,d\mu_{g(s)}ds+ C(n,r) \varepsilon\\
&\leq C(n,r)\int_0^t\int_{B_{g_0}(p,r+C\sqrt{s})}v_+^2\,d\mu_{g(s)}ds+ C(n,r) \varepsilon\\
&\leq C(n,r)\int_0^t\int_{B_{g_0}(o,3)}v_+^2\,d\mu_{g(s)}ds+C(n,r) \varepsilon.
\end{split}
\end{equation*}
Here we have used $\int_{B_{g_0}(o,3)} v^2(0)\,d\mu_{g_0} \leq \varepsilon \leq 1$ in the second line by assumption and we have been using that $B_{g_0}(p,r/2)\subset B_{g(s)}(p,r/2)$ in the second inequality together with $B_{g(s)}(p,r)\subset B_{g_0}(p,r+C\sqrt{s})$ according to Proposition \ref{prop:RF_basics} in the penultimate line.

Since $v$ is locally uniformly bounded in time thanks to the interior estimates from Proposition \ref{prop-grad-bd-a-priori},  one gets:
\begin{equation}
\begin{split}\label{can-do-better-v}
\int_{B_{g_0}(p,r/2)}v_+^2\,d\mu_{g(t)}\leq \int_{B_{g(t)}(p,r/2)}v_+^2\,d\mu_{g(t)}\leq C_1(t+\varepsilon),
\end{split}
\end{equation}
for some $C_1=C_1(n,r,D_0,A) \in (0,\infty)$.

Applying \eqref{alpha-beta-gal-ineq} to $\alpha=1/4$ and $\beta=1/2$ in lieu of $\alpha=1/2$ and $\beta=1$ and using the previous bound \eqref{can-do-better-v} lead to the existence of  $C_2=C_2(n,r,D_0,A) \in (0,\infty)$ such that:
\begin{equation*}
\begin{split}
\int_{B_{g_0}(p,r/4)}v_+^2\,d\mu_{g(t)}&\leq \int_{B_{g(t)}(p,r/4)}v_+^2\,d\mu_{g(t)}\leq C(n,r/2)(\int_0^t\int_{B_{g(s)}(p,r/2)}v_+^2\,d\mu_{g(s)}ds+\varepsilon)\\
&\leq C(n,r/2)(\int_0^tC_1(s+\varepsilon)\,ds+ \varepsilon)\\
& \leq \frac{1}{2}C(n,r/2)C_1t^2+C(n,r/2)\varepsilon t+C(n,r/2)\varepsilon\leq C_2(t^2+\varepsilon).
\end{split}
\end{equation*}
 Here we have used $\varepsilon\leq 1$ in the last inequality.
By induction on $k$, one gets the expected bound on $v_+$.
\end{proof}

The following corollary turns the $L^2$ bound of $v_+$ into a pointwise bound.

\begin{coro}\label{coro-sup-v_+}
Under the same assumptions as   Proposition \ref{prop-grad-bd-a-priori} with the further assumption  
 $$\int_{B_{g_0}(o,3)} v^2(0)\,d\mu_{g_0} \leq \varepsilon \leq 1,$$ and  for each $k\geq 1$, there exists $C_k=C(n,k,r,D_0,A) \in (0,\infty)$ such that if $B_{g_0}(p,\sqrt{t})\times (2^{-1}t,2t)\subset B_{g_0}(p, 2^{-k}r)\times (0,\min\{T,B(n)2^{-2k}r^2/D_0^2\})\subset B_{g_0}(o, 3)\times (0,T)$ for some uniform positive constant $B(n) \in (0,\infty)$ then:
\begin{equation*}
v_+^2(p,t)\leq C_kt^{-n/2}\left(t^{k}+\varepsilon\right).
\end{equation*}
\end{coro}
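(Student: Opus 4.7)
The plan is to upgrade the $L^2$ estimate of Lemma \ref{lemma-L2-v-plus} to a pointwise bound via a parabolic mean-value (Moser iteration) inequality applied to the non-negative subsolution $v_+^2$ of the heat operator along the flow. The first input is that $v_+^2$ is a weak subsolution of $\partial_t - \Delta_{g(t)}$: Proposition \ref{prop-evo-eqn-v} gives $(\partial_t - \Delta_{g(t)})v = -2|\Ob(t)|^2_{g(t)} \leq 0$, so $v$ is a classical subsolution, truncation preserves subsolution-ness in the weak sense, and the Kato-type computation $(\partial_t - \Delta_{g(t)})v_+^2 \leq 2 v_+(\partial_t - \Delta_{g(t)})v - 2|\nabla v_+|^2_{g(t)} \leq 0$ transfers the property to $v_+^2$.

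The second ingredient is the parabolic mean-value estimate itself. The bound $|\Rm|_{g(t)} \leq D_0/t$ produces uniform distance and volume distortion on parabolic cylinders $Q := B_{g(t)}(p,\sqrt{t}) \times (t/2, 2t)$ of size $\sim \sqrt{t}$ (cf.~Proposition \ref{prop:RF_basics}), while $\Ric(g(t)) \geq 0$ supplies a uniform Poincar\'e inequality and a scale-invariant Sobolev inequality in the sense of Buser and Saloff-Coste. Standard Moser iteration on the evolving-metric cylinder $Q$ then produces
\begin{equation*}
v_+^2(p, t) \leq \frac{C(n)}{t \cdot \vol_{g(t)}(B_{g(t)}(p, \sqrt{t}))}\int_{t/2}^{2t}\!\!\int_{B_{g(s)}(p,\sqrt{t})} v_+^2\, d\mu_{g(s)}\, ds.
\end{equation*}
The chain of inclusions in the hypothesis guarantees that Lemma \ref{lemma-L2-v-plus} applies with radius $2^{-k}r$ at every $s \in (t/2, 2t)$, so the inner spatial integral is bounded by $C_k(s^k + \varepsilon)$, and integrating in $s$ bounds the double integral above by $C_k t(t^k + \varepsilon)$.

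The conclusion then reduces to the Euclidean-type volume lower bound $\vol_{g(t)}(B_{g(t)}(p, \sqrt{t})) \geq c(n, D_0) t^{n/2}$. This will be the main obstacle, since the statement carries no explicit non-collapsing hypothesis: the $t^{n/2}$ has to be extracted from the curvature bound $|\Rm|_{g(t)} \leq D_0/t$ at the parabolic scale $\sqrt{t}$ together with Bishop-Gromov volume comparison, the non-expansion of distances under the flow [\eqref{eq:distance-distortion}, Proposition \ref{prop:RF_basics}], and a volume comparison with the initial metric at scale $\sqrt{t}$. Once this volume bound is in hand, chaining the three steps above produces the claimed pointwise estimate $v_+^2(p,t) \leq C_k t^{-n/2}(t^k + \varepsilon)$.
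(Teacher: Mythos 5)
Your strategy --- run a parabolic Moser/Nash--Moser iteration on $v_+^2$ over cylinders of scale $\sqrt t$ and feed in the $L^2$ bound of Lemma~\ref{lemma-L2-v-plus} --- is exactly the route the paper takes, and the preparatory steps (that $v$ is a classical subsolution, hence $v_+^2$ a weak one) match. Where you diverge is the final step. You reduce the claim to an absolute non-collapsing bound $\vol_{g(t)}B_{g(t)}(p,\sqrt t)\geq c(n,D_0)t^{n/2}$ and then assert that this follows from the curvature bound $|\Rm|\leq D_0/t$ together with Bishop--Gromov and distance distortion. That deduction does not hold: Bishop--Gromov (from $\Ric\geq 0$) gives volume \emph{upper} bounds, and a curvature upper bound alone never gives a volume lower bound (a flat, very thin torus satisfies all the curvature hypotheses while being arbitrarily collapsed). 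No combination of the ingredients you list produces the non-collapsing estimate.

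The paper's proof does not need that estimate. The Moser mean-value inequality is written with integral averages,
\begin{equation*}
\sup_{B_{g_0}(p,\sqrt{\theta t})\times(t(1+\theta),2t)}v_+^2\;\leq\;C(n,\theta,D_0)\,\dashint_t^{2t}\!\dashint_{B_{g_0}(p,\sqrt{t})}v_+^2\,d\mu_{g(s)}ds\, ,
\end{equation*}
whose constant relies only on volume-doubling and the Poincar\'e inequality, both available from $\Ric\geq 0$ alone. To bring in Lemma~\ref{lemma-L2-v-plus} one then enlarges the averaging ball from $B_{g_0}(p,\sqrt t)$ to $B_{g_0}(p,2^{-k}r)$; the cost of doing this is the \emph{ratio} $\vol B_{g_0}(p,2^{-k}r)/\vol B_{g_0}(p,\sqrt t)$, which is bounded by $(2^{-k}r/\sqrt t)^n\leq C(k,r)\,t^{-n/2}$ by Bishop--Gromov applied as a doubling estimate. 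This is where the factor $t^{-n/2}$ comes from --- as a ratio of volumes, not as an absolute lower bound --- so no non-collapsing hypothesis at the scale $\sqrt t$ enters. You should rewrite your last step along these lines rather than attempting to prove a volume lower bound that your hypotheses do not furnish.
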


\begin{proof}
Recall from  Proposition \ref{prop-evo-eqn-v} that $v$ satisfies $(\partial_t-\Delta_{g(t)})v\leq 0$, i.e.~$v$ is a subsolution to the heat equation along the Ricci flow $(g(t))_t$. Choose $k>n$ and perform a local Nash-Moser  iteration on each ball $B_{g_0}(p,\sqrt{t})\times\left(t,2t\right)\subset B_{g_0}(p,2^{-k}r)\times(0,\min\{T,B(n)2^{-2k}r^2/D_0^2\})$ to get for each $\theta\in(0,1)$,  
\begin{equation}\label{loc-nash-moser}
\sup_{B_{g_0}(p,\sqrt{\theta t})\times\left(t(1+\theta),\,2t\right)}v_+^2\leq C(n,\theta,D_0)\, \dashint_t^{2t}\dashint_{B_{g_0}(p,\sqrt{t})}v_+^2\,d\mu_{g(s)}ds.
\end{equation} 
 See for instance 
  \cite{Saloff-Coste-Riem}, \cite{Grig92} or \cite[Theorem 25.2]{Chow2010TheRF}
  for a proof.

Now apply Lemma \ref{lemma-L2-v-plus} so that if $t<B2^{-2k}r^2D_0^{-2}$, the previous inequality \eqref{loc-nash-moser} leads to the pointwise bound:
\begin{equation*}
\begin{split}
\sup_{B_{g_0}(p,\sqrt{\theta t})\times\left(t(1+\theta),\,2t\right)}v_+^2&\leq C(n,k,\theta,D_0)\big(\tfrac{r^2}{t}\big)^{\frac{n}{2}}\dashint_t^{2t}\dashint_{B_{g_0}(p,2^{-k} r)}v_+^2\,d\mu_{g(s)}ds\\
&\leq C(n,k,\theta,D_0)t^{-\frac{n}{2}}\dashint_t^{2t}s^k+\varepsilon\,ds\\
&\leq  C(n,k,\theta,D_0) t^{-n/2} \Big( t^k+\varepsilon\Big),
\end{split}
\end{equation*}
 as expected.
\end{proof}

\section{Integral estimates}\label{sec-int-est}
In this section we partially  consider the   setting  of  Section \ref{section-pt-int-est} with the notation of Section \ref{sec-equations}. Nevertheless the setting of Lemma \ref{magic-comput} is more general and the object $\Ob$ is not necessarily the tensor defined in Section \ref{sec-equations}. We also consider both settings combined: in this case we always assume the notation of Section  \ref{sec-equations}.  The setting being considered is detailed in the statement of each proposition, lemma, corollary or remark.   The reader not interested in these more general settings may assume the setting of Theorem \ref{main-I} which implies both the setting of Lemma \ref{magic-comput} and Section \ref{section-pt-int-est}. 
\begin{lemma}[Rough energy bound on $\Ob$]\label{coro-rough-T}
We consider  the same assumptions as in Proposition \ref{prop-grad-bd-a-priori}. Then there exist $C'(n,A,D_0) \in (0,\infty)$ such that:
\begin{equation*}
\begin{split}
\int_0^t\int_{B_{g_0}(o,3)}|\Ob(s)|^2_{g(s)}\,d\mu_{g(s)}ds&\leq C',\quad t\in(0, \min\{T,B(n)/D_0^2\}).
\end{split}
\end{equation*}
\end{lemma}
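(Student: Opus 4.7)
The plan is to derive a clean evolution identity for $|\nabla u|^2$ along the Ricci flow and to leverage the non-negativity of Ricci curvature in order to discard an otherwise dangerous reaction term. Concretely, I would first establish
\[
(\partial_s - \Delta_{g(s)})|\nabla^{g(s)} u|^2_{g(s)} = -2|\nabla^{g(s),2} u|^2_{g(s)},
\]
which arises from differentiating $|\nabla u|^2_g = g^{ij}\nabla_i u \nabla_j u$ along the flow (the contribution $+2\Ric(\nabla u,\nabla u)$ comes from $\partial_s g^{ij}=2\Ric^{ij}$), combined with $2\langle \nabla u, \nabla \partial_s u\rangle = 2\langle \nabla u, \nabla \Delta u\rangle$ and the Bochner formula; the two Ricci terms cancel exactly, in the same spirit as the cancellation behind Proposition \ref{prop-evo-eqn-v}.

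Next I would multiply by a Perelman-type cutoff $\eta$ (Lemma \ref{lemma-Cutoff-Perelman}) with $\eta\equiv 1$ on $B_{g_0}(o,3)$ and supported in $B_{g_0}(o,r_0)$ for some $r_0\in(3,4)$, integrate in space (using $\partial_s\, d\mu_{g(s)}=-\R_{g(s)}\, d\mu_{g(s)}$) and integrate by parts. The resulting identity
\[
\frac{d}{ds}\int \eta\,|\nabla u|^2\, d\mu_{g(s)} + 2\int \eta\,|\nabla^2 u|^2\, d\mu_{g(s)} + \int \eta\,|\nabla u|^2\, \R_{g(s)}\, d\mu_{g(s)} = \int |\nabla u|^2\,\Delta_{g(s)}\eta\, d\mu_{g(s)}
\]
contains a third term of favourable sign, since $\Ric\geq 0$ forces $\R_{g(s)}\geq 0$; it can therefore simply be discarded. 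Integrating from $0$ to $t$ and invoking the initial bound $|\nabla u_0|^2_{g_0}\leq A$, the uniform gradient estimate $|\nabla u(s)|_{g(s)}\leq C_1$ on $B_{g_0}(o,r_0)$ provided by Proposition \ref{prop-grad-bd-a-priori}, and the uniform control on $|\Delta_{g(s)}\eta|$ (which is where the restriction $t\leq B(n)/D_0^2$ enters, via the distance distortion of Proposition \ref{prop:RF_basics}), yields
\[
\int_0^t \int_{B_{g_0}(o,3)} |\nabla^{g(s),2} u|^2_{g(s)}\, d\mu_{g(s)}\, ds \leq C(n,A,D_0).
\]

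To convert this into the claimed bound on $\Ob$, I would decompose $\Ob(s) = \nabla^{g(s),2} u - s\Ric(g(s)) - g(s)/2$ and apply the elementary inequality $|\Ob|^2\leq 3|\nabla^2 u|^2 + 3s^2|\Ric|^2 + 3n/4$. The curvature hypothesis gives the pointwise bound $s^2|\Ric(g(s))|^2_{g(s)}\leq s^2 |\Rm(g(s))|_{g(s)}^2\leq D_0^2$, so the last two contributions each yield at most a constant depending on $n,D_0$ over the (bounded) time interval. I expect the only technical nuisance to be arranging the cutoff, together with the distance distortion of Proposition \ref{prop:RF_basics}, so that the balls on which we apply interior estimates stay inside the region where Proposition \ref{prop-grad-bd-a-priori} is valid; the truncation $t\leq B(n)/D_0^2$ is precisely the margin that guarantees this.
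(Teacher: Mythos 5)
Your proposal is correct and follows essentially the same route as the paper: derive $(\partial_s-\Delta)|\nabla u|^2=-2|\nabla^2 u|^2$ from the Bochner formula, multiply by a Perelman cutoff, integrate by parts to get an a priori bound on $\int_0^t\int|\nabla^2 u|^2$, and then absorb the remaining terms in $\Ob$ via $s|\Ric(g(s))|\leq D_0$ and the monotonicity/Bishop--Gromov control of the volume. One small slip worth flagging: the display you call ``the resulting identity'' omits the $\int(\partial_s\eta)|\nabla u|^2\,d\mu$ contribution, so it is not an identity; it becomes a one-sided inequality once you invoke the Perelman-cutoff property $\partial_s\eta\leq\Delta_{g(s)}\eta$, which is precisely how the paper's computation proceeds.
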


\begin{proof} First note that 
\begin{equation*}
|\Ob(t)|^2_{g(t)}\leq C(n)\left(|\nabla^{g(t),\,2}u(t)|^2_{g(t)}+|g(t)+2t|\Ric(g(t))|^2_{g(t)}\right) \leq C(n)  \left(1+ |\nabla^{g(t),\,2}u(t)|^2_{g(t)}\right),
\end{equation*}
for some uniform $C(n)\in (0,\infty)$. Here we used that $t|\Ric(g(t))|_{g(t)}$ is bounded by assumption \eqref{eq:basic_ass}. It is thus sufficient to show that
\begin{equation*}
 \int_0^t\int_{B_{g_0}(o,3)}|\nabla^{g(t),\,2}u(t)|^2_{g(t)}\,d\mu_{g(s)}ds\leq C',\quad t\in(0, \min\{T,B(n)/D_0^2\}). 
\end{equation*}
Let $\eta$ be a Perelman type cut-off function  with respect to the point $o$ and radii $r_1:=3$ and $r_2:=7/2$ (see Lemma \ref{lemma-Cutoff-Perelman}).
The Bochner formula for functions $\Delta_{g(t)}\nabla^{g(t)}u(t)=\Delta_{g(t)}\nabla^{g(t)}u(t)+\Ric(g(t))(\nabla^{g(t)}u(t))$ implies that
\begin{equation*}
\left(\partial_t-\Delta_{g(t)}\right)|\nabla^{g(t)}u|_{g(t)}^2=-2|\nabla^{g(t),\,2}u|^2_{g(t)}.
\end{equation*}
 Therefore, due to the properties of a Perelman type cut-off function,
\begin{equation*}
\begin{split}
\frac{d}{dt}& \int_{M} |\nabla^{g(t)} u|_{g(t)}^2 \eta\,d\mu_{g(t)} \leq   \\
&- \int_{M}2|\grad^{g(t),\,2} u(t) |_{g(t)}^2 \eta + 2g(t)(\grad^{g(t)} \eta, \grad^{g(t)} |\grad^{g(t)} u(t)|_{g(t)}^2) + \R_{g(t)} |\grad^{g(t)} u(t)|^2\eta\,d\mu_{g(t)} \\
& \leq  \int_{M}  -2|\grad^{g(t),\,2} u(t)|_{g(t)}^2 \eta  + c(n)|\grad^{g(t)} \eta|_{g(t)} |\grad^{g(t)} u(t)|_{g(t)}|\grad^{g(t),\,2} u(t)|_{g(t)}\,d\mu_{g(t)}\\
& \leq  \int_{M}- |\grad^{g(t),\,2} u(t)|_{g(t)}^2 \eta  +  \frac{|\grad^{g(t)} \eta|_{g(t)}^2}{\eta} |\grad^{g(t)} u(t)|_{g(t)}^2\,d\mu_{g(t)} \\
& \leq -\int_{B_{g_0}(o,3)} |\grad^{g(t),\,2} u(t)|_{g(t)}^2\,d\mu_{g(t)}  
+ C(n)\int_{B_{g(t)}(o,7/2)}   |\grad^{g(t)} u(t)|^2 \,d\mu_{g(t)}\\
&\leq -\int_{B_{g_0}(o,3)} |\grad^{g(t),\,2} u(t)|_{g(t)}^2\,d\mu_{g(t)}  
+ C(n)\int_{B_{g_0}(o,7/2+C\sqrt{t})}   |\grad^{g(t)} u(t)|^2 \,d\mu_{g(t)}\\
&\leq -\int_{B_{g_0}(o,3)} |\grad^{g(t),\,2} u(t)|_{g(t)}^2\,d\mu_{g(t)}  
+ C(n)\int_{B_{g_0}(o,15/4)}   |\grad^{g(t)} u(t)|^2 \,d\mu_{g(t)}.
\end{split}
\end{equation*}
Here we have been using that $B_{g(t)}(p,7/2)\subset B_{g_0}(p,7/2+C\sqrt{t})$ according to Proposition \ref{prop:RF_basics} in the penultimate line and the fact that $7/2+C\sqrt{t}\leq 15/4<4$ by choice on $t$. We have also used $B_{g_0}(p,3)\subset B_{g(t)}(p,3)$ in the antepenultimate line.

Integrating in time and using [\eqref{loc-upper-bd-grad-u}, Proposition \ref{prop-grad-bd-a-priori}] with $k=1$ gives the result.
\end{proof}
The next result uses the rescaled scalar curvature $t\R_{g(t)}$ as a barrier to estimate the tensor $\Ob(t).$ Recall that for two tensors $A$ and $B$ on $M$, we denote by $A\ast_{g} B$ any linear combination of tensorial contractions of $A$ and $B$ with respect to a Riemannian metric $g$ on $M$.

\begin{lemma}\label{magic-comput}
Let $(U,g(t))_{t\in[0,T)}$ be a solution to Ricci flow, not necessarily complete, satisfying the following:
\begin{enumerate}
\item  \label{ass-1-magic} There exists $K\in (0,\infty)$ such that for all $t\in[0,T)$, $|\Rm(g(t))|_{g(t)}\leq K\,\R_{g(t)}$ on $U$.
\item \label{ass-2-magic} There exists $c(n)\in (0,\infty)$ such that $t\R_{g(t)}\leq \frac{c(n)}{ K},$ for all $t\in (0,T) $  on $U$.
\end{enumerate}
 Let $\Ob(t)$ be a time-dependent tensor satisfying
\begin{equation*}
\frac{\partial}{\partial t}\Ob(t)=\Delta_{g(t)}\Ob(t)+\Rm(g(t))\ast_{g(t)} \Ob(t),\quad \text{on $U\times(0,T)$.}
\end{equation*}

 Then   there exists $\beta=\beta(n,K)\in (0,\infty)$ such that
 \begin{equation}
\begin{split}\label{est-T-heat-subsol}
\left(\partial_t-\Delta_{g(t)}\right)\left(\frac{|\Ob(t)|^2_{g(t)}}{(1+\beta t\R_{g(t)})^{1/6}}\right)&\leq  -\frac{1}{3^{1/6}}|\nabla^{g(t)}\Ob(t)|^2_{g(t)}-\frac{1}{12}\frac{|\nabla^{g(t)}(1+\beta t\R_{g(t)})|^2_{g(t)}}{(1+\beta t\R_{g(t)})^{1/6+2}}|\Ob(t)|^2_{g(t)}\\
&\quad-2c(n)K\R_{g(t)}\frac{|\Ob(t)|^2_{g(t)}}{(1+\beta t\R_{g(t)})^{1/6+1}}.
\end{split}
\end{equation}
\end{lemma}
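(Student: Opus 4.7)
My plan is a direct parabolic computation for the quotient $f/w^{1/6}$, where $f:=|\Ob|^2_{g(t)}$ and $w:=1+\beta t\R_{g(t)}$, balancing the three natural sources of negative terms via a sharp Cauchy--Schwarz and a suitable choice of $\beta=\beta(n,K)$.

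First I would compute the two individual evolutions. The hypothesised equation $\partial_t\Ob = \Delta_{g(t)}\Ob + \Rm(g(t)) \ast_{g(t)} \Ob$ together with $\partial_t g = -2\Ric(g(t))$ and the usual Bochner--Weitzenbock for tensors yields
\[
(\partial_t - \Delta_{g(t)})|\Ob|^2_{g(t)} \leq -2|\nabla^{g(t)}\Ob|^2_{g(t)} + C(n)|\Rm(g(t))|_{g(t)}|\Ob|^2_{g(t)},
\]
and assumption~\eqref{ass-1-magic} bounds the curvature factor by $C(n)K\R_{g(t)}$. The classical identity $(\partial_t-\Delta_{g(t)})\R_{g(t)} = 2|\Ric(g(t))|^2_{g(t)}$ along Ricci flow then gives $(\partial_t - \Delta_{g(t)})w = \beta(\R_{g(t)} + 2t|\Ric(g(t))|^2_{g(t)}) \geq \beta\R_{g(t)} \geq 0$.

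Next I would combine the two via the product rule
\[
(\partial_t - \Delta)(fw^{-\alpha}) = w^{-\alpha}(\partial_t-\Delta)f - \alpha f w^{-\alpha-1}(\partial_t-\Delta)w - \alpha(\alpha+1)w^{-\alpha-2}f|\nabla w|^2 + 2\alpha w^{-\alpha-1}\langle \nabla f,\nabla w\rangle,
\]
with $\alpha = 1/6$, and absorb the cross term into the good gradient terms. The sharp way is to use $|\nabla f|\leq 2|\Ob||\nabla\Ob|$ together with $2ab\leq a^2+b^2$ applied to $a=w^{-\alpha/2}|\nabla\Ob|$ and $b=2\alpha w^{-\alpha/2-1}|\Ob||\nabla w|$, which yields $2\alpha w^{-\alpha-1}\langle\nabla f,\nabla w\rangle \leq w^{-\alpha}|\nabla\Ob|^2 + 4\alpha^2 w^{-\alpha-2}f|\nabla w|^2$. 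After collection the coefficient of $w^{-\alpha-2}f|\nabla w|^2$ becomes $4\alpha^2 - \alpha(\alpha+1) = 3\alpha^2 - \alpha$, which for $\alpha = 1/6$ is exactly $-1/12$---this explains the specific exponent $1/6$ in the statement.

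To extract the remaining two terms of \eqref{est-T-heat-subsol} I would invoke assumption~\eqref{ass-2-magic}. Picking $\beta$ so that $\beta c(n)/K \leq 2$ forces $w\leq 3$; then $w^{-1/6}\geq 3^{-1/6}$ and the leftover $-w^{-\alpha}|\nabla\Ob|^2$ is bounded above by $-3^{-1/6}|\nabla\Ob|^2$. The $\R f$ contribution has coefficient $w^{-\alpha-1}[w\,C(n)K - \alpha\beta]$, which is dominated by $-2c(n)Kw^{-\alpha-1}$ as soon as $\alpha\beta \geq w\,C(n)K + 2c(n)K$, hence (using $w\leq 3$) for $\beta \geq C'(n)K$. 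The two constraints $C'(n)K \leq \beta \leq 2K/c(n)$ are compatible provided $c(n)$ is a sufficiently small dimensional constant, which is the one genuine bookkeeping subtlety of the argument; the rest is forced by the algebraic structure of the terms and the specific choice $\alpha=1/6$.
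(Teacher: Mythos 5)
Your proposal is correct and follows essentially the same route as the paper: expand $(\partial_t-\Delta)(|\Ob|^2 V^{-\alpha})$ by the product rule, absorb the cross term $2\alpha V^{-\alpha-1}\langle\nabla|\Ob|^2,\nabla V\rangle$ via the Cauchy--Schwarz estimate $\leq V^{-\alpha}|\nabla\Ob|^2 + 4\alpha^2 V^{-\alpha-2}|\Ob|^2|\nabla V|^2$, observe that the resulting coefficient $4\alpha^2-\alpha(\alpha+1)=3\alpha^2-\alpha$ of the gradient-of-$V$ term equals $-1/12$ precisely at $\alpha=1/6$, and then take $V=1+\beta t\R_{g(t)}$ with $(\partial_t-\Delta)V\geq\beta\R_{g(t)}$ and $\beta\sim K$ together with the smallness hypothesis $t\R_{g(t)}\leq c(n)/K$ (forcing $V\leq 3$) to make the zeroth-order term sufficiently negative. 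Your remark that the upper and lower bounds on $\beta$ are compatible only for a sufficiently small dimensional $c(n)$ is a valid observation that makes the bookkeeping explicit, where the paper passes over it with specific numeric choices.
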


\begin{rk} 
In Lemma \ref{magic-comput}, if $|\Rm(g(t))|_{g(t)}\leq K$ on $U\times(0,T)$ for some $K\in (0,\infty)$ instead of assumptions \eqref{ass-1-magic},   then 
\begin{equation}\label{replaceeq}  
\begin{split}
\left(\partial_t-\Delta_{g(t)}\right)\left(e^{-Ct}|\Ob(t)|^2_{g(t)}\right)&\leq  -2e^{-Ct}|\nabla^{g(t)}\Ob(t)|^2_{g(t)},
\end{split}
\end{equation}
on $U$ 
for a large constant $C=C(n,K)\in (0,\infty)$, in view of the equation 
 \begin{equation*}
\left(\partial_t-\Delta_{g(t)}\right) \Ob(t) =  \Rm(g(t)) \ast_{g(t)}  \Ob(t),
\end{equation*}
satisfied by $\Ob(t)$. 
\end{rk}
\begin{proof}
For an arbitrary positive smooth function $V$,  
\begin{equation}
\begin{split}\label{first-rough-inequ}
\left(\partial_t-\Delta_{g(t)}\right)\left(\frac{|\Ob|^2_{g(t)}}{V^{\alpha}}\right)&=\frac{1}{V^{\alpha}}\left(\partial_t-\Delta_{g(t)}\right)|\Ob|^2_{g(t)}-2g(t)(\nabla^{g(t)}|\Ob|^2_{g(t)},\nabla^{g(t)}V^{-\alpha})\\
&\quad+|\Ob|^2_{g(t)}\left(\partial_t-\Delta_{g(t)}\right)V^{-\alpha}\\
&=\frac{1}{V^{\alpha}}\left(-2|\nabla^{g(t)}\Ob|^2_{g(t)}+\Rm(g(t))\ast_{g(t)} \Ob\ast_{g(t)} \Ob\right)\\
&\quad+\frac{2\alpha}{V^{\alpha+1}}g(t)(\nabla^{g(t)}|\Ob|^2_{g(t)},\nabla^{g(t)}V)\\
&\quad-\alpha|\Ob|^2_{g(t)}\left(\frac{\left(\partial_t-\Delta_{g(t)}\right)V}{V^{\alpha+1}}+(\alpha+1)\frac{|\nabla^{g(t)}V|^2_{g(t)}}{V^{\alpha+2}}\right)\\
&\leq \frac{1}{V^{\alpha}}\left(-2|\nabla^{g(t)}\Ob|^2_{g(t)}+c(n)|\Rm(g(t))|_{g(t)}|\Ob|^2_{g(t)}\right)\\
&\quad+\frac{2\alpha}{V^{\alpha+1}}g(t)(\nabla^{g(t)}|\Ob|^2_{g(t)},\nabla^{g(t)}V)\\
&\quad-\alpha|\Ob|^2_{g(t)}\left(\frac{\left(\partial_t-\Delta_{g(t)}\right)V}{V^{\alpha+1}}+(\alpha+1)\frac{|\nabla^{g(t)}V|^2_{g(t)}}{V^{\alpha+2}}\right).
\end{split}
\end{equation}
Now, 
\begin{equation*}
\begin{split}
\frac{2\alpha}{V^{\alpha+1}}g(t)(\nabla^{g(t)}|\Ob|^2_{g(t)},\nabla^{g(t)}V)\leq \frac{1}{V^{\alpha}}|\nabla^{g(t)}\Ob|^2_{g(t)}+4\alpha^2\frac{|\nabla^{g(t)}V|^2_{g(t)}|\Ob|^2_{g(t)}}{V^{\alpha+2}},
\end{split}
\end{equation*}
which implies in combination with \eqref{first-rough-inequ} 
\begin{equation}
\begin{split}\label{sec-rough-inequ}
\left(\partial_t-\Delta_{g(t)}\right)\left(\frac{|\Ob|^2_{g(t)}}{V^{\alpha}}\right)&\leq -\frac{1}{V^{\alpha}}|\nabla^{g(t)}\Ob|^2_{g(t)}+c(n)|\Rm(g(t))|_{g(t)}\frac{|\Ob|^2_{g(t)}}{V^{\alpha}}\\
&\quad+\alpha(3\alpha-1)\frac{|\nabla^{g(t)}V|^2_{g(t)}}{V^{\alpha+2}}|\Ob|^2_{g(t)}-\alpha |\Ob|^2_{g(t)}\frac{\left(\partial_t-\Delta_{g(t)}\right)V}{V^{\alpha+1}}.
\end{split}
\end{equation}

For $V:=1+\beta t\R_{g(t)}$ with $\beta >0$ to be chosen later, we have $V\geq 1$ since $\R_{g(t)}\geq 0$ and 
\begin{equation*}
\left(\partial_t-\Delta_{g(t)}\right)V=\beta\left(\R_{g(t)}+2t|\Ric(g(t))|^2_{g(t)}\right)\geq \beta \R_{g(t)},
\end{equation*}
according to the evolution equation satisfied by the scalar curvature along a solution to the Ricci flow. 
We define $ \varepsilon:= \frac{1}{c(n)K 12}$ so that $t\R_{g(t)} \leq \varepsilon.$
Then, if $\alpha:=1/6$, \eqref{sec-rough-inequ} implies that:
\begin{equation*}
\begin{split}
\left(\partial_t-\Delta_{g(t)}\right)&\left(\frac{|\Ob|^2_{g(t)}}{(1+\beta t\R_{g(t)})^{1/6}}\right)\leq\\
& \quad-\frac{1}{(1+\beta t\R_{g(t)})^{1/6}}|\nabla^{g(t)}\Ob|^2_{g(t)}+c(n)|\Rm(g(t))|_{g(t)}\frac{|\Ob|^2_{g(t)}}{(1+\beta t\R_{g(t)})^{1/6}}\\
&\quad-\frac{1}{12}\frac{|\nabla^{g(t)}(1+\beta t\R_{g(t)})|^2_{g(t)}}{(1+\beta t\R_{g(t)})^{1/6+2}}|\Ob|^2_{g(t)}-\frac{\beta}{6}\frac{\R_{g(t)}|\Ob|^2_{g(t)}}{(1+\beta t\R_{g(t)})^{1/6+1}}\\
&\leq  -\frac{1}{(1+\beta t\R_{g(t)})^{1/6}}|\nabla^{g(t)}\Ob|^2_{g(t)}-\frac{1}{12}\frac{|\nabla^{g(t)}(1+\beta t\R_{g(t)})|^2_{g(t)}}{(1+\beta t\R_{g(t)})^{1/6+2}}|\Ob|^2_{g(t)}\\
&\quad+\left(c(n)K(1+\beta t\R_{g(t)})-\frac{\beta}{6}\right)\R_{g(t)}\frac{|\Ob|^2_{g(t)}}{(1+\beta t\R_{g(t)})^{1/6+1}}\\
&\leq  -\frac{1}{(1+\beta t\R_{g(t)})^{1/6}}|\nabla^{g(t)}\Ob|^2_{g(t)}-\frac{1}{12}\frac{|\nabla^{g(t)}(1+\beta t\R_{g(t)})|^2_{g(t)}}{(1+\beta t\R_{g(t)})^{1/6+2}}|\Ob|^2_{g(t)}\\
&\quad+\left(c(n)K(1+\beta \varepsilon)-\frac{\beta}{6}\right)\R_{g(t)}\frac{|\Ob|^2_{g(t)}}{(1+\beta t\R_{g(t)})^{1/6+1}}.
\end{split}
\end{equation*}
Here we have used the assumption $|\Rm(g(t))|_{g(t)}\leq K\R_{g(t)}$ pointwise in the second line together with the bound $t\R_{g(t)}\leq \varepsilon = \frac{1}{c(n)K 12}$  in the last line. 

Then using  $c(n)K\varepsilon\leq\frac{1}{12}$ and $\beta (=24c(n)K),$ we have  $c(n)K(1+\beta \varepsilon)-\beta/6= -2c(n)K$  and   hence

\begin{equation*}
\begin{split}
\left(\partial_t-\Delta_{g(t)}\right)\left(\frac{|\Ob|^2_{g(t)}}{(1+\beta t\R_{g(t)})^{1/6}}\right)&\leq  -\frac{1}{3^{1/6}}|\nabla^{g(t)}\Ob|^2_{g(t)}-\frac{1}{12}\frac{|\nabla^{g(t)}(1+\beta t\R_{g(t)})|^2_{g(t)}}{(1+\beta t\R_{g(t)})^{1/6+2}}\\
&\quad-2c(n)K\R_{g(t)}\frac{|\Ob|^2_{g(t)}}{(1+\beta t\R_{g(t)})^{1/6+1}}.
\end{split}
\end{equation*}
Here we have used that $1+\beta t\R_{g(t)}\leq 3$ by the choice on $\beta$ with respect to $\varepsilon$.
This ends the proof of the lemma.
\end{proof}

Lemma \ref{magic-comput} is used to get the following preliminary result in order to derive an $L^2_{loc}$ bound on the tensor $\Ob$.

\begin{prop}\label{oh-my-gosh-L2}
Let $(M,g(t))_{t\in [0,T)}$ be a smooth, complete, connected solution satisfying
$\Rc(g(t)) \geq 0,$ $|\Rm(t)|\leq D_0 /t$ and 
 the assumptions of Lemma \ref{magic-comput} for  $U:=B_{g_0}(o,2)\setminus\overline{B_{g_0}(o,1)}.$ If $\eta$ is a Perelman-type cut-off function on $M\times(0,T)$ with respect to a point $p\in B_{g_0}(o,2)$ and radii $r_1$ and $r_2$ such that $B_{d_0}(p,r_2)\subset U$,
\begin{equation}\label{firstvn}
\begin{split}
\frac{d}{dt}\int_M\frac{|\Ob(t)|^2_{g(t)}}{(1+\beta t\R_{g(t)})^{1/6}}\eta\,d\mu_{g(t)}\leq
C_{*}\int_M|\Ob(t)|^2_{g(t)}\frac{|\nabla^{g(t)}\eta|_{g(t)}^2}{\eta}\,d\mu_{g(t)},
\end{split}
\end{equation}
for a universal constant $C_{*}\in (0,\infty)$, for all $t\leq \min\left( T, \frac{B(r_2^2 -r_1^2)}{D_0^2}\right),$ for a constant $B(n)\in (0,\infty).$ 
 If we replace condition \eqref{ass-1-magic} of Lemma \ref{magic-comput}  by
 $|\Rm(g(t)|\leq K$  on $U:=B_{g_0}(o,2)\setminus\overline{B_{g_0}(o,1)}$ for all $t\in [0,T)$, then 
 we get
 \begin{equation}\label{secondat}
\begin{split}
\frac{d}{dt} \Big( e^{-C(K,n)t}\int_M |\Ob(t)|^2_{g(t)}  \eta\,d\mu_{g(t)}\Big)\leq
e^{-C(K,n)t} C_{*}(K)  \int_M|\Ob(t)|^2_{g(t)}\frac{|\nabla^{g(t)}\eta|_{g(t)}^2}{\eta}\,d\mu_{g(t)},
\end{split}
\end{equation}
  after reducing $T$ so that $T\leq 1$.

\end{prop}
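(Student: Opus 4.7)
The strategy is to test the pointwise inequality of Lemma \ref{magic-comput} against $\eta$, integrate in space, and show that after integration by parts every cross term either cancels against one of the three negative quadratic contributions of \eqref{est-T-heat-subsol} or fits inside a single residue of the form $|\Ob|^2|\nabla\eta|^2/\eta$. Write $F:=|\Ob|^2_{g(t)}/(1+\beta t\R_{g(t)})^{1/6}$. Using $\partial_t d\mu_{g(t)}=-\R_{g(t)}d\mu_{g(t)}$ we have
$\tfrac{d}{dt}\int F\eta\, d\mu_{g(t)}=\int[(\partial_t F)\eta+F\partial_t\eta-\R F\eta]\,d\mu_{g(t)}$.
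Applying \eqref{est-T-heat-subsol} to the first summand and integrating by parts the $(\Delta F)\eta$ piece as $-\int g(t)(\nabla F,\nabla\eta)\,d\mu_{g(t)}$ produces the three negative terms from Lemma \ref{magic-comput} together with the three auxiliary terms $-\int g(t)(\nabla F,\nabla\eta)$, $\int F\partial_t\eta$ and $-\int\R F\eta$.

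Expanding
$\nabla F=\tfrac{2\langle\Ob,\nabla\Ob\rangle}{(1+\beta t\R)^{1/6}}-\tfrac{1}{6}\tfrac{\beta t|\Ob|^2\nabla\R}{(1+\beta t\R)^{1/6+1}}$,
the contribution of the first piece to $-\int g(t)(\nabla F,\nabla\eta)$ is bounded via Young's inequality by $\varepsilon_1\int|\nabla\Ob|^2\eta/(1+\beta t\R)^{1/6}+C(\varepsilon_1)\int|\Ob|^2|\nabla\eta|^2/\eta$. The second piece, by the splitting already sketched in the proof of Lemma \ref{magic-comput}, is bounded by $\varepsilon_2\int(\beta t)^2|\nabla\R|^2|\Ob|^2\eta/(1+\beta t\R)^{1/6+2}+C(\varepsilon_2)\int|\Ob|^2|\nabla\eta|^2/\eta$, using $(1+\beta t\R)^{1/6}\geq 1$ to pass from $|\nabla\eta|^2/(\eta(1+\beta t\R)^{1/6})$ to $|\nabla\eta|^2/\eta$ in the residue. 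Choosing $\varepsilon_1,\varepsilon_2$ sufficiently small, each $\varepsilon_i$-piece is absorbed into the corresponding negative quadratic term of \eqref{est-T-heat-subsol}.

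Of the two remaining auxiliary contributions, $-\int\R F\eta$ is nonpositive by $\R_{g(t)}\geq 0$, and the third negative term of \eqref{est-T-heat-subsol} is also nonpositive, so both can simply be discarded. For $\int F\partial_t\eta$ we use the standard property of the Perelman-type cut-off function of Lemma \ref{lemma-Cutoff-Perelman}, namely $|\partial_t\eta|+|\Delta\eta|\lesssim|\nabla\eta|^2/\eta$ on $\supp\eta$, valid on the time interval $t\leq B(n)(r_2^2-r_1^2)/D_0^2$. This is precisely the range in which the distance distortion $C(D_0,n)\sqrt{t}$ from Proposition \ref{prop:RF_basics} remains below the shell width $r_2-r_1$, so that $g(t)$- and $g_0$-balls on the transition zone are comparable and $\supp\eta$ stays within $U$. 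Combined with $F\leq|\Ob|^2$, this delivers \eqref{firstvn} with a universal constant $C_*$.

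The proof of \eqref{secondat} is structurally identical, replacing $F$ by $e^{-C(n,K)t}|\Ob|^2$ and \eqref{est-T-heat-subsol} by \eqref{replaceeq}; the exponential prefactor absorbs the curvature coupling $\Rm\ast_{g(t)}\Ob$, so there is no scalar curvature barrier to manage and only one cross term $-\int e^{-Ct}g(t)(\nabla|\Ob|^2,\nabla\eta)$ to handle by Young, absorbed into the single gradient term $-2e^{-Ct}\int|\nabla\Ob|^2\eta$. The main technical obstacle throughout is the careful bookkeeping of the Young constants so that each cross term lands strictly inside the $\varepsilon$-multiple of the correct negative quadratic piece of Lemma \ref{magic-comput}; the time restriction $t\leq B(r_2^2-r_1^2)/D_0^2$ plays no analytic role beyond guaranteeing the validity of the Perelman-type cut-off estimates.
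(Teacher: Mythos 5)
Your decomposition $\frac{d}{dt}\int F\eta\,d\mu_{g(t)}=\int(\partial_t-\Delta_{g(t)})F\cdot\eta\,d\mu_{g(t)}+\int F(\partial_t+\Delta_{g(t)}-\R_{g(t)})\eta\,d\mu_{g(t)}$ and the subsequent application of Lemma \ref{magic-comput} plus Young's inequality for the cross term $-\int g(t)(\nabla^{g(t)}F,\nabla^{g(t)}\eta)\,d\mu_{g(t)}$ agree with the paper and are fine. However, your treatment of the remaining auxiliary term $\int F\,\partial_t\eta\,d\mu_{g(t)}$ contains a genuine gap. You invoke a ``standard property'' of the form $|\partial_t\eta|+|\Delta_{g(t)}\eta|\lesssim|\nabla^{g(t)}\eta|^2_{g(t)}/\eta$ on $\supp\eta$. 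No such pointwise estimate is asserted in Lemma \ref{lemma-Cutoff-Perelman}, and it is in fact false: on the inner ball $B_{g(t)}(x_0,r_1)$ one has $\eta(\cdot,t)\equiv e^{-kt}$, so $\nabla^{g(t)}\eta\equiv 0$ and $|\nabla^{g(t)}\eta|^2_{g(t)}/\eta\equiv 0$, while $\partial_t\eta=-k e^{-kt}\neq 0$ there. So the claimed inequality cannot hold, and the term $\int F\,\partial_t\eta\,d\mu_{g(t)}$ is not controlled as you describe. Moreover one cannot simply discard it by a sign argument either, because $\partial_t\eta$ need not be nonpositive on the transition annulus (the distance $d_{g(t)}(x_0,\cdot)$ is non-increasing under $\Ric\geq 0$, which pushes $\eta$ up there).

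The way out, and what the paper does, is to use the barrier subsolution property $\partial_t\eta\leq\Delta_{g(t)}\eta$ from item $(3)$ of Lemma \ref{lemma-Cutoff-Perelman}. Since $F\geq 0$ and $\R_{g(t)}\geq 0$, this gives
$\int_M F(\partial_t+\Delta_{g(t)}-\R_{g(t)})\eta\,d\mu_{g(t)}\leq 2\int_M F\,\Delta_{g(t)}\eta\,d\mu_{g(t)}=-2\int_M g(t)(\nabla^{g(t)}F,\nabla^{g(t)}\eta)\,d\mu_{g(t)}$, and then the entire auxiliary contribution is of the cross-term type you already know how to absorb via Young's inequality into the two negative quadratic pieces of \eqref{est-T-heat-subsol}, with residue $C_*\int|\Ob|^2_{g(t)}|\nabla^{g(t)}\eta|^2_{g(t)}/\eta\,d\mu_{g(t)}$. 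If you replace your pointwise handling of $\int F\,\partial_t\eta$ by this step, the rest of your argument (including the $L^\infty$ version \eqref{secondat} via \eqref{replaceeq}) goes through, and is then essentially identical to the paper's proof.
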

\begin{proof}
According to Lemma \ref{magic-comput} and integration by parts,
\begin{equation}
\begin{split}\label{prelim-IBP-L2-T}
\frac{d}{dt}\int_M\bigg(&\frac{|\Ob|^2_{g(t)}}{(1+\beta t\R_{g(t)})^{1/6}}\bigg)\eta\,d\mu_{g(t)}=\int_M\left(\partial_t-\Delta_{g(t)}\right)\left(\frac{|\Ob|^2_{g(t)}}{(1+\beta t\R_{g(t)})^{1/6}}\right)\eta\,d\mu_{g(t)}\\
&\quad+\int_M\left(\frac{|\Ob|^2_{g(t)}}{(1+\beta t\R_{g(t)})^{1/6}}\right)\left(\partial_t+\Delta_{g(t)}-\R_{g(t)}\right)\eta\,d\mu_{g(t)}\\
&\leq -\int_M\left(\frac{1}{3^{1/6}}|\nabla^{g(t)}\Ob|^2_{g(t)}+\frac{1}{12}\frac{|\nabla^{g(t)}(1+\beta t\R_{g(t)})|^2_{g(t)}}{(1+\beta t\R_{g(t)})^{1/6+2}}|\Ob|^2_{g(t)}\right)\eta\,d\mu_{g(t)}\\
&\quad+\int_M\left(\frac{|\Ob|^2_{g(t)}}{(1+\beta t\R_{g(t)})^{1/6}}\right)\left(\partial_t+\Delta_{g(t)}\right)\eta\,d\mu_{g(t)}\\
&\leq -\int_M\left(\frac{1}{3^{1/6}}|\nabla^{g(t)}\Ob|^2_{g(t)}+\frac{1}{12}\frac{|\nabla^{g(t)}(1+\beta t\R_{g(t)})|^2_{g(t)}}{(1+\beta t\R_{g(t)})^{1/6+2}}|\Ob|^2_{g(t)}\right)\eta\,d\mu_{g(t)}\\
&\quad+2\int_M\left(\frac{|\Ob|^2_{g(t)}}{(1+\beta t\R_{g(t)})^{1/6}}\right)\Delta_{g(t)}\eta\,d\mu_{g(t)},
\end{split}
\end{equation}
where we have used the fact that $\R_{g(t)}\geq 0$ in the second inequality together with the fact that $\eta$ is a Perelman-type cut-off function in the last line. Now, observe that for any positive $\varepsilon_i$, $i=1,2$,
\begin{equation}
\begin{split}\label{derive-prod-T}
&\left|\nabla^{g(t)}\left(\frac{|\Ob|^2_{g(t)}}{(1+\beta t\R_{g(t)})^{1/6}}\right)\right|_{g(t)}|\nabla^{g(t)}\eta|_{g(t)}\leq\\
&\left( \left(\frac{1}{(1+\beta t\R_{g(t)})^{1/6}}\right)|\nabla^{g(t)}|\Ob|^2_{g(t)}|_{g(t)}+\frac{|\nabla^{g(t)}(1+\beta t\R_{g(t)})|_{g(t)}}{(1+\beta t\R_{g(t)})^{1/6+1}}|\Ob|^2_{g(t)}\right)|\nabla^{g(t)}\eta|_{g(t)}\\
&\leq \varepsilon_1|\nabla^{g(t)}\Ob|^2_{g(t)}\eta+\varepsilon_2\frac{|\nabla^{g(t)}(1+\beta t\R_{g(t)})|^2_{g(t)}}{(1+\beta t\R_{g(t)})^{1/6+2}}|\Ob|^2_{g(t)}\eta+C(\varepsilon_1,\varepsilon_2)|\Ob|^2_{g(t)}\frac{|\nabla^{g(t)}\eta|^2_{g(t)}}{\eta},
\end{split}
\end{equation}
where we have used that $\R_{g(t)}\geq 0$.

A further integration by parts based on the  previous estimates \eqref{prelim-IBP-L2-T} and \eqref{derive-prod-T} leads to:
\begin{equation*}
\begin{split}
&\frac{d}{dt}\int_M\left(\frac{|\Ob|^2_{g(t)}}{(1+\beta t\R_{g(t)})^{1/6}}\right)\eta\,d\mu_{g(t)}\leq-\left(\frac{1}{3^{1/6}}-\varepsilon_1\right)\int_M|\nabla^{g(t)}\Ob|^2_{g(t)}\eta\,d\mu_{g(t)}\\
&-\left(\frac{1}{12}-\varepsilon_2\right)\int_M\frac{|\nabla^{g(t)}(1+\beta t\R_{g(t)})|^2_{g(t)}}{(1+\beta t\R_{g(t)})^{1/6+2}}|\Ob|^2_{g(t)}\eta\,d\mu_{g(t)}+C(\varepsilon_1,\varepsilon_2)\int_M|\Ob|^2_{g(t)}\frac{|\nabla^{g(t)}\eta|^2_{g(t)}}{\eta}\,d\mu_{g(t)}\\
&\leq C(\varepsilon_1,\varepsilon_2)\int_M|\Ob|^2_{g(t)}\frac{|\nabla^{g(t)}\eta|^2_{g(t)}}{\eta}\,d\mu_{g(t)},
\end{split}
\end{equation*}
provided $\varepsilon_1$ and $\varepsilon_2$ are chosen small enough and where $C(\varepsilon_1,\varepsilon_2)\in (0,\infty)$ is a constant that may differ from that of estimate \eqref{derive-prod-T}.

 If we replace condition \eqref{ass-1-magic} of Lemma \ref{magic-comput}  by
 $|\Rm(g(t)|\leq K$ on $U$ for all $t\in [0,T)$, then \eqref{secondat} holds.  This can be seen by considering 
 the equation \eqref{replaceeq} in place of 
 \eqref{est-T-heat-subsol} in the above proof.
\end{proof}

We are now in a position to establish a faster than polynomial decay for the $L^2_{loc}$ norm of the tensor $\Ob(t)$ as $t$ goes to $0$:
\begin{prop}\label{theo-dream-T}
Let $(M,g(t))_{t\in [0,T)}$ be a smooth, complete, connected solution satisfying
$\Rc(g(t)) \geq 0,$ $|\Rm(t)|\leq D_0 /t$ and 
 the assumptions of Lemma \ref{magic-comput} for  $U:=B_{g_0}(o,2)\setminus\overline{B_{g_0}(o,1)}.$
Let  $p\in B_{g_0}(o,2)$ and  $r>0$ a radius such that $B_{g_0}(p,2r)\subset U$.
 If  $$\int_{B_{g_0}(o,3)} |\Ob(0)|^2_{g_0}\,d\mu_{g_0} \leq \varepsilon \leq 1,$$ then for each $k\geq 0$, there exist   $C_k=C_k(n,r,A,K,D_0)\in (0,\infty)$ such that,
 \begin{equation*}  
\int_{B_{g_0}\left(p,2^{-k}r\right)} |\Ob(t)|^2_{g(t)}  \,d\mu_{g(t)}\leq \int_{B_{g(t)}\left(p,2^{-k}r\right)} |\Ob(t)|^2_{g(t)}  \,d\mu_{g(t)}
\leq C_k t^k +  C_k\varepsilon,
\end{equation*}
 for $t\in[0,\min\{T,B(n)2^{-2k}r^2/D_0^2\}]$.
 
If we replace condition \eqref{ass-1-magic} of Lemma \ref{magic-comput}  by
 $|\Rm(g(t)|\leq K$  on $U:=B_{g_0}(o,2)\setminus\overline{B_{g_0}(o,1)}$ for all $t\in [0,T)$, then
 \begin{equation*}  
\int_{B_{g_0}\left(p,2^{-k}r\right)} |\Ob(t)|^2_{g(t)}  \,d\mu_{g(t)}\leq C_k \int_{B_{g(t)}\left(p,2^{-k}r\right)} |\Ob(t)|^2_{g(t)}  \,d\mu_{g(t)}
\leq C^2_k t^k +  C^2_k\varepsilon.
\end{equation*}

\end{prop}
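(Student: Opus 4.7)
The plan is to prove this by induction on $k$, using Proposition \ref{oh-my-gosh-L2} to gain one extra power of $t$ at each step, starting from the rough energy bound of Lemma \ref{coro-rough-T}.

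First, observe that since $B_{g_0}(p,2r)\subset U\subset B_{g_0}(o,3)$ and the $g$-distance is non-increasing in $t$ (so $B_{g_0}(p,\rho)\subset B_{g(t)}(p,\rho)$), and since $(1+\beta t\R_{g(t)})^{1/6}$ is bounded above and below between $1$ and $3^{1/6}$ under the running hypotheses of Lemma \ref{magic-comput}, the first inequality in the statement is trivial and it suffices to control $F_k(t):=\int_{B_{g(t)}(p,2^{-k}r)}|\Ob(t)|^2_{g(t)}\,d\mu_{g(t)}$. For each integer $k\geq 0$, pick a Perelman-type cut-off $\eta_k$ (from Lemma \ref{lemma-Cutoff-Perelman}) centred at $p$, supported in a $g(t)$-ball of radius $\rho_k:=2^{-k}r$ and identically $1$ on the corresponding ball of radius $\rho_{k+1}=2^{-(k+1)}r$, so that $|\nabla^{g(t)}\eta_k|^2_{g(t)}/\eta_k\leq C\,4^{k+1}/r^2$. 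Proposition \ref{oh-my-gosh-L2} applied to $\eta_k$ and integrated in time from $0$ to $t$ yields, using the uniform two-sided bound on $(1+\beta t\R)^{1/6}$,
\begin{equation*}
F_{k+1}(t)\leq C\int_{B_{g_0}(p,2^{-k}r)}|\Ob(0)|^2_{g_0}\,d\mu_{g_0}+C\,\frac{4^{k+1}}{r^2}\int_0^t F_k(s)\,ds,
\end{equation*}
valid as long as the Perelman-type cut-off is admissible, i.e.~$t\leq B(n)(\rho_k-\rho_{k+1})^2/D_0^2\sim B(n)4^{-k}r^2/D_0^2$, which is precisely the time range stated.

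The induction on $k$ then runs as follows. For $k=0$, combine the initial $\varepsilon$-bound with Lemma \ref{coro-rough-T} (applied after noting that $B_{g(s)}(p,r)\subset B_{g_0}(o,3)$ for the relevant times), to obtain $F_0(t)\leq C_0(1+\varepsilon)=C_0(t^0+\varepsilon)$, absorbing $C_0$ into itself. Assuming $F_k(s)\leq C_k(s^k+\varepsilon)$ on the required time interval, substitute into the iterative inequality above to obtain
\begin{equation*}
F_{k+1}(t)\leq C\varepsilon+C\,\frac{4^{k+1}}{r^2}\,C_k\int_0^t(s^k+\varepsilon)\,ds\leq C_{k+1}(t^{k+1}+\varepsilon),
\end{equation*}
where we used $\varepsilon\leq 1$ and $t\leq B(n)4^{-k}r^2/D_0^2$ to absorb the $t\varepsilon$-contribution into $\varepsilon$. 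This closes the induction and yields the first claim with $C_k=C_k(n,r,K,D_0)$ (the dependence on $A$ enters only through the base case).

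For the second assertion, where condition \eqref{ass-1-magic} is replaced by the global bound $|\Rm(g(t))|_{g(t)}\leq K$, the argument is identical after substituting the second inequality \eqref{secondat} of Proposition \ref{oh-my-gosh-L2} for \eqref{firstvn}: the exponential factor $e^{-C(K,n)t}$ is uniformly pinched between two positive constants on $[0,T]\subset[0,1]$ and can therefore be absorbed in the multiplicative constant at each induction step. The only real technical point in the whole argument, and the mild ``main obstacle'', is bookkeeping: one must ensure at every step that the cut-off is supported in the good annular region $U$ where Lemma \ref{magic-comput} applies, that the radii $\rho_k\in[r/2,r]$ stay inside $B_{g_0}(p,2r)\subset U$, and that the time range $t\leq B(n)2^{-2k}r^2/D_0^2$ is compatible with the validity of the Perelman cut-offs at scale $\rho_k-\rho_{k+1}\sim 2^{-k}r$; this is exactly the restriction appearing in the statement, so no further subtleties arise.
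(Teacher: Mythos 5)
Your proof is correct and follows essentially the same approach as the paper: you integrate Proposition \ref{oh-my-gosh-L2} against a Perelman cut-off to obtain the iterative inequality relating $F_{k+1}$ to $F_k$, anchor the induction at $k=0$ with Lemma \ref{coro-rough-T}, and gain one power of $t$ per step while absorbing the $\varepsilon t$ term using $\varepsilon\leq 1$. The paper uses the same cut-off machinery (with radii $\alpha r,\beta r$ halved at each step) and the same base case, and the second assertion is likewise handled by replacing \eqref{firstvn} with \eqref{secondat}.
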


\begin{proof}
Let $\eta$ be a Perelman type cut-off function with respect to $p$ and radii $r_1:=\alpha r$ and $r_2:=\beta r$ with $0<\alpha<\beta\leq 1$.
 Using Proposition \ref{oh-my-gosh-L2}, we see that
\begin{equation*}
 \frac{d}{dt}\int_M\frac{|\Ob|^2_{g(t)}}{(1+\beta t\R_{g(t)})^{1/6}}\eta\,d\mu_{g(t)}\leq C(n,\alpha,\beta,r)\int_M|\Ob|^2_{g(t)}\frac{|\nabla^{g(t)}\eta|_{g(t)}^2}{\eta}\,d\mu_{g(t)}.
\end{equation*}
 Integrating in time implies,
 \begin{equation}\label{first-step-T-L2}
 \begin{split}
 &\int_{B_{g_0}(p,\alpha r)} |\Ob(t)|^2_{g(t)}\,d\mu_{g(t)}\leq  \int_{B_{g(t)}(p,\alpha r)} |\Ob(t)|^2_{g(t)}\,d\mu_{g(t)}\\
 &\leq C(n,\al,\beta,r) \int_{B_{g_0}(p,\beta r)} |\Ob(0)|^2\,d\mu_{g(0)} + C(n,\alpha,\beta,r)\int_0^t\int_{B_{g(s)}(p,\beta r)}|\Ob(s)|^2_{g(s)}\,d\mu_{g(s)}ds.
  \end{split}
 \end{equation}
 In particular, if $\alpha:=1/2$ and $\beta:=1$ then
 since $B_{g_0}(p,r/2)\subset B_{g(t)}(p,r/2)$ and $B_{g(s)}(p,r)\subset B_{g_0}(p,r+C\sqrt{s})\subset B_{g_0}(p,2r)\subset B_{g_0}(o,2)$ according to Proposition \ref{prop:RF_basics} and by assumption on $t\in[0,\min\{T,r^2/C^2\}]$:
  \begin{equation*}
 \begin{split}
 \int_{B_{g_0}(p,r/2)}& |\Ob(t)|^2_{g(t)}\,d\mu_{g(t)}\leq  \int_{B_{g(t)}(p,r/2)} |\Ob(t)|^2_{g(t)}\,d\mu_{g(t)}\\
 & \leq C(n,r) \int_{B_{g_0}(o,3)} |\Ob(0)|^2\,d\mu_{g(0)} + C(n,r)\int_0^t\int_{B_{g_0}(p,r+C\sqrt{s})}|\Ob(s)|^2_{g(s)}\,d\mu_{g(s)}ds\\
  & \leq C(n,r) \int_{B_{g_0}(o,3)} |\Ob(0)|^2\,d\mu_{g(0)} + C(n,r)\int_0^t\int_{B_{g_0}(p,2r)}|\Ob(s)|^2_{g(s)}\,d\mu_{g(s)}ds\\
 &\leq C_0(1 + \varepsilon), 
 \end{split}
 \end{equation*}
 for some $C_0=C_0(n,r,A,K,D_0)\in (0,\infty)$. Here we have used Lemma \ref{coro-rough-T} in the last line.
 In particular, we have obtained:
  \begin{equation}\label{first-step-T-L2-bis}
  \int_{B_{g(t)}(p,r/2)} |\Ob(t)|^2_{g(t)}\,d\mu_{g(t)}\leq C_0(1 + \varepsilon).
 \end{equation}
Considering  the estimate  \eqref{first-step-T-L2}  for 
 $\alpha=1/4$ 
and $\beta=1/2$  and using the already obtained   bound  \eqref{first-step-T-L2-bis}  in this estimate yields
\begin{equation*}
 \int_{B_{g_0}(p,r/4)} |\Ob(t)|_{g(t)}^2\,d\mu_{g(t)}   \leq  \int_{B_{g(t)}(p,r/4)} |\Ob(t)|^2_{g(t)}\,d\mu_{g(t)}\leq    C_0 \varepsilon+ C_0(1 + \varepsilon)t \leq C_1\varepsilon+  C_1t,
 \end{equation*}
 where we have used the fact that $\varepsilon\leq 1$. By induction on $k$, one ends up with the expected decay on smaller and smaller balls of radii $r/2^k$.
 
 If we replace condition \eqref{ass-1-magic} of Lemma \ref{magic-comput}  by
 $|\Rm(g(t)|\leq K$ on $U:=B_{g_0}(o,2)\setminus\overline{B_{g_0}(o,1)}$ for all $t\in T$, then \eqref{secondat}  can be used  in place of 
 \eqref{firstvn} in the above proof, and hence the results still hold.
  \end{proof}

The next lemma will be used twice. On the one hand, we will get a rough lower bound on the mean value of the function $v$ before passing to the limit: this is the content of Corollary \ref{mean-value-coro-I}. On the other hand, this lemma will be invoked once more in Corollary \ref{coro-mean-value-II}
to get a faster than polynomial lower bound on the mean value of $v$ once we take a limit along a sequence of times going to $0$. 
\begin{lemma}\label{lemma-mean-value-v-minus}
Under the assumptions of Proposition \ref{prop-grad-bd-a-priori},    let $p\in B_{g_0}(o,2)$ and  $r>0$  a radius such that $B_{g_0}(p,2r)\subset B_{g_0}(o,2)\setminus\overline{B_{g_0}(o,1)}$.

If $\int_{B_{g_0}(o,3)} v_+^2(0)\,d\mu_{g_0}\leq\varepsilon \leq 1$ then if $\eta$ is a Perelman type cut-off function with respect to $p$ and radii $r_1:=r/2$ and $r_2:=r$,  for $t\in[0,\min\{T,B(n)r^2/D_0^2\}]$, where $B(n)\in (0,\infty)$ is a uniform constant,
\begin{equation*}
\begin{split}
\int_{B_{g_0}(p,r)}v\eta\,d\mu_{g(t)}&\geq \int_Mv\eta\,d\mu_{g(0)}-2\int_{0}^t\int_{B_{g_0}(p,2r)}|\Ob(s)|^2_{g(s)}\,d\mu_{g(s)}ds\\
&\quad-C(n)\int_{0}^t\int_{B_{g_0}(p,2r)}|s\div_{g(s)}\Ob(s)+\Ob(s)(\nabla^{g(s)}u(s))|_{g(s)}\,d\mu_{g(s)}ds\\
&\quad-\int_{0}^t\int_{B_{g_0}(p,2r)}\left(\sup_{B_{g_0}(p,2r)\times [0,s]}v_+\right)\R_{g(s)}\,d\mu_{g(s)}ds-C't^{1/2}\sup_{B_{g_0}(p,2r)\times [0,t]}v_+,
\end{split}
\end{equation*}
where $C'=C'(r,n,A,D_0)\in (0,\infty)$.

\end{lemma}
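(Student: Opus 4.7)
Plan: The strategy is to differentiate $\int_M v(s)\eta\,d\mu_{g(s)}$ in $s$, extract a lower bound on the time derivative that matches the four error contributions in the statement, and integrate over $s \in [0,t]$.

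Using $\partial_s d\mu_{g(s)} = -\R_{g(s)}d\mu_{g(s)}$ along the flow, the evolution equation $(\partial_s - \Delta_{g(s)})v = -2|\Ob(s)|^2_{g(s)}$ of Proposition \ref{prop-evo-eqn-v}, and one integration by parts on $\int(\Delta_{g(s)}v)\eta\,d\mu_{g(s)}$ (no boundary terms since $\eta$ is compactly supported), one obtains
\begin{equation*}
\frac{d}{ds}\int v\eta\,d\mu_{g(s)} = -\int\langle\nabla v,\nabla\eta\rangle_{g(s)}\,d\mu_{g(s)} - 2\int|\Ob|^2_{g(s)}\eta\,d\mu_{g(s)} + \int v(\partial_s\eta - \R_{g(s)}\eta)\,d\mu_{g(s)}.
\end{equation*}
The key substitution is the gradient identity \eqref{beautiful-covid19-bis} of Lemma \ref{lemma-gen-form-exp-ric-pin}, $\nabla v = 2(s\div_{g(s)}\Ob + \Ob(\nabla u,\cdot))$; combined with the Perelman cutoff bound $|\nabla\eta|_{g(s)}\leq C(n)/r$, it converts the gradient term into the first error contribution $-C(n)\int|s\div_{g(s)}\Ob + \Ob(\nabla u,\cdot)|_{g(s)}\,d\mu_{g(s)}$ after integrating in time, while $-2\int|\Ob|^2\eta$ yields the second one directly.

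For the remaining term $\int v(\partial_s\eta - \R_{g(s)}\eta)\,d\mu_{g(s)}$, decompose $v = v_+ - v_-$ with $v_-\geq 0$. Since $\R_{g(s)}\geq 0$, the contribution $+\int v_-\eta\R_{g(s)} \geq 0$ is favourable for a lower bound and is discarded; the remaining $-\int v_+\eta\R_{g(s)}$ is bounded below by $-(\sup_{B_{g_0}(p,2r)\times[0,s]}v_+)\int\R_{g(s)}\,d\mu_{g(s)}$, producing the third error term. Here the inclusion $\supp(\eta) \subset B_{g(s)}(p,r) \subset B_{g_0}(p,2r)$, valid on the prescribed time interval, follows from the distance distortion estimate of Proposition \ref{prop:RF_basics}.

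The main obstacle is controlling $\int_0^t\int v\partial_s\eta\,d\mu_{g(s)}\,ds$ in the sharp form $-C't^{1/2}\sup v_+$. Here one exploits the Perelman-type bound $|\partial_s\eta|\leq C/(r\sqrt{s})$ on the support, which reflects the distance distortion rate $|\partial_s d_{g(s)}(x,p)|\leq C/\sqrt{s}$ along a Ricci flow with $|\Rm|\leq D_0/s$. Combined with the interior sup-norm control on $|v|$ on $B_{g_0}(p,2r)\times(0,t]$ from Proposition \ref{prop-grad-bd-a-priori} (applied to the definition $v = |\nabla u|^2 - u + s^2\R + 2s\tr\Ob$) and the integrability $\int_0^t s^{-1/2}\,ds \leq 2\sqrt{t}$, this yields the $\sqrt{t}$ factor. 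The refinement from $\sup|v|$ to $\sup v_+$ on the right-hand side relies on using that $-v$ is a subsolution of the heat equation (since $(\partial_s - \Delta_{g(s)})v = -2|\Ob|^2\leq 0$) to control $\sup v_-$ in terms of $\sup v_+$ modulo the error terms already present in the statement. Summing the four contributions and integrating over $s\in[0,t]$ yields the claimed inequality.
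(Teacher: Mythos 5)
Your overall plan --- differentiating $\int_M v\eta\,d\mu_{g(s)}$, using $(\partial_s-\Delta_{g(s)})v=-2|\Ob|^2$, the gradient identity \eqref{beautiful-covid19-bis}, and splitting off the $\R_{g(s)}$- and $\partial_s\eta$-contributions --- matches the paper's, and your treatment of the $|\Ob|^2$, gradient, and $-\int v_+\R\eta$ terms is correct. However, the step you flag as ``the main obstacle'' contains a genuine gap, and your proposed resolution does not work.

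First, Lemma \ref{lemma-Cutoff-Perelman} supplies only the one-sided bound $\partial_s\eta\geq -V/\sqrt{s}$ together with the subsolution property $\partial_s\eta\leq \Delta_{g(s)}\eta$; there is no two-sided pointwise bound of the form $|\partial_s\eta|\leq C/(r\sqrt{s})$ recorded there, so the estimate $\bigl|\int v\,\partial_s\eta\,d\mu\bigr|\leq C s^{-1/2}\sup|v|$ is not justified by what you cite. Second, and more seriously, the claimed upgrade from $\sup|v|$ to $\sup v_+$ is flawed: $(\partial_s-\Delta_{g(s)})v\leq 0$ makes $v$ itself a subsolution, hence $-v$ a supersolution, so the maximum principle controls $\sup v$ (not $\sup v_-$); and, independently of the sign error, ``controlling $\sup v_-$ in terms of $\sup v_+$ modulo the error terms'' is exactly the kind of lower bound on $v$ that this lemma and Corollaries \ref{mean-value-coro-I}--\ref{coro-mean-value-II} are being set up to prove, so invoking it here is circular. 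The $\sup v_+$ dependence is essential downstream: in Corollary \ref{coro-mean-value-II} the bound $\sup v_+\lesssim s^{k/2-n/4}$ from Corollary \ref{coro-sup-v_+} is substituted in, and no analogous a priori decay is available for $v_-$.

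What the paper actually does avoids ever touching $v_-$: it keeps the full $\int v(\partial_s+\Delta_{g(s)}-\R_{g(s)})\eta\,d\mu$ and writes $v=(v-\sup v_+)+\sup v_+$. Since $v-\sup v_+\leq 0$ while $(\partial_s+\Delta_{g(s)}-\R_{g(s)})\eta\leq 2\Delta_{g(s)}\eta$ (by the cutoff subsolution property and $\R_{g(s)},\eta\geq 0$), the product is bounded \emph{below} by $2(v-\sup v_+)\Delta_{g(s)}\eta$, and integration by parts turns this into $-2\int\langle\nabla^{g(s)}v,\nabla^{g(s)}\eta\rangle$, which \eqref{beautiful-covid19-bis} converts into the $\div_{g(s)}\Ob$ error term. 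The remaining $\sup v_+\int(\partial_s+\Delta_{g(s)}-\R_{g(s)})\eta\,d\mu$ is handled with $\int\Delta_{g(s)}\eta\,d\mu=0$, $\R_{g(s)}\geq 0$ and the one-sided bound $\partial_s\eta\geq -c(r)/\sqrt{s}$, producing exactly the $-C't^{1/2}\sup v_+$ and $-\sup v_+\int\R_{g(s)}$ terms. In your framing, the fix is not to bound $\sup v_-$ but to use $-v_-\partial_s\eta\geq -v_-\Delta_{g(s)}\eta$, integrate by parts to get $\int\langle\nabla^{g(s)}v_-,\nabla^{g(s)}\eta\rangle$, and absorb it (via $|\nabla^{g(s)}v_-|\leq|\nabla^{g(s)}v|$ and \eqref{beautiful-covid19-bis}) into the $\div_{g(s)}\Ob$ term already present.
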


\begin{proof}
Let $\eta$ be such a Perelman type cut-off function and let us multiply the evolution equation satisfied by $v$ from Proposition \ref{prop-evo-eqn-v} by $\eta$ so that after integration we get
\begin{equation*}
\begin{split}
\int_Mv\eta\,d\mu_{g(s)}\bigg\vert_{0}^t&=\int_{0}^t\int_M-2|\Ob(s)|^2_{g(s)}\eta+v\left(\partial_s+\Delta_{g(s)}-\R_{g(s)}\right)\eta\,d\mu_{g(s)}ds\\
&\geq-2\int_{0}^t\int_{B_{g_0}(p,r+C\sqrt{s})}|\Ob(s)|^2_{g(s)}\,d\mu_{g(s)}ds\\
&\quad+\int_{0}^t\int_M\underbrace{\left(v-\sup_{B_{g_0}(p,2r)\times [0,s]}v_+\right)}_{\leq 0}\underbrace{\left(\partial_s+\Delta_{g(s)}-\R_{g(s)}\right)\eta}_{\leq 2\Delta_{g(s)}\eta}\,d\mu_{g(s)}ds\\
&\quad+\int_{0}^t\sup_{B_{g_0}(p,2r)\times [0,s]}v_+\int_M\left(\partial_s+\Delta_{g(s)}-\R_{g(s)}\right)\eta\,d\mu_{g(s)}ds\\
&\geq -2\int_{0}^t\int_{B_{g_0}(p,2r)}|\Ob(s)|^2_{g(s)}\,d\mu_{g(s)}ds-2\int_{0}^t\int_M|\nabla^{g(s)}v|_{g(s)}|\nabla^{g(s)}\eta|_{g(s)}\,d\mu_{g(s)}ds\\
&\quad-\int_{0}^t\sup_{B_{g_0}(p,2r)\times [0,s]}v_+\int_{B_{g_0}(p,2r)}\R_{g(s)}\,d\mu_{g(s)}ds-C't^{1/2}\sup_{B_{g_0}(p,2r)\times [0,t]}v_+,
\end{split}
\end{equation*}
where $C'=C'(r,n,A,D_0)\in (0,\infty)$.
Here we have used that $v$ is  bounded by Definition \ref{defn-v} and the interior estimates from Proposition \ref{prop-grad-bd-a-priori} together with the fact that $\partial_s\eta\geq -c(r)/s^{1/2}$ for a   constant $c(r)$ depending on $r$ in the last inequality thanks to Lemma \ref{lemma-Cutoff-Perelman}.
According to [\eqref{beautiful-covid19-bis}, Lemma \ref{lemma-gen-form-exp-ric-pin}
], we infer that
\begin{equation*}
\begin{split}
\int_Mv\eta\,d\mu_{g(t)}&\geq \int_Mv\eta\,d\mu_{g(0)}-2\int_{0}^t\int_{B_{g_0}(p,2r)}|\Ob(s)|^2_{g(s)}\,d\mu_{g(s)}ds\\
&\quad-C(n)\int_{0}^t\int_{B_{g_0}(p,2r)}|s\div_{g(s)}\Ob(s)+\Ob(s)(\nabla^{g(s)}u(s))|_{g(s)}\,d\mu_{g(s)}ds\\
&\quad-\int_{0}^t\int_{B_{g_0}(p,2r)}\left(\sup_{B_{g_0}(p,2r)\times [0,s]}v_+\right)\R_{g(s)}\,d\mu_{g(s)}ds-C't^{1/2}\sup_{B_{g_0}(p,2r)\times [0,t]}v_+,
\end{split}
\end{equation*}
which turns out to be the exact inequality we are looking for.
\end{proof}

As explained before the statement of Lemma \ref{lemma-mean-value-v-minus}, the following corollary is a first step to show that the mean value of the function $v(t)$ converges to $0$ as $t$ goes to $0$.

\begin{coro}\label{mean-value-coro-I}
Under the assumptions of Lemma \ref{magic-comput} with $U:=B_{g_0}(o,2)\setminus\overline{B_{g_0}(o,1)}$, and Proposition \ref{prop-grad-bd-a-priori},   let $p\in B_{g_0}(o,2)$ and  $r>0$  a radius such that $B_{g_0}(p,2r)\subset U$.\\

If $\int_{B_{g_0}(o,3)} \left(v^2(0)+|\Ob(0)|^2_{g_0}\right)\,d\mu_{g_0}\leq\varepsilon \leq 1$ then for $t\in[0,\min\{T,B(n)r^2/D_0^2\}]$, where $B(n)\in (0,\infty)$ is a uniform constant,
\begin{equation*}
\begin{split}
\int_{B_{g_0}(p,r/2)}v(t)\,d\mu_{g(t)}&\geq -C_0\,\varepsilon^{1/2}-\varepsilon(t),
\end{split}
\end{equation*} 
where $C_0=C_0(n,r,A,K,D_0)\in (0,\infty)$ and
\begin{equation*}
\ep(t) := C(n,A,K,D_0) (\vol_{g(t)}B_{g_0}(p,2r)-\vol_{g_0}B_{g_0}(p,2r)) +C(n,A,K,D_0,r)t^{  1 /2 }  \to 0,
\end{equation*}
as $t\to 0$.

If we replace condition (1) of Lemma \ref{magic-comput}  by
 $|\Rm(g(t)|\leq K$  on $U:=B_{g_0}(o,2)\setminus\overline{B_{g_0}(o,1)}$ for all $t\in T$, then the result   still  holds. 

\end{coro}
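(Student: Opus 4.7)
The plan is to apply Lemma \ref{lemma-mean-value-v-minus} with a Perelman-type cut-off function $\eta$ associated with $p$ and radii $r_1=r/2$, $r_2=r$, bound each of the five terms on the right-hand side, and then compare $\int_M v\eta\,d\mu_{g(t)}$ with the target integral $\int_{B_{g_0}(p,r/2)} v\,d\mu_{g(t)}$. The twofold assumption that $\int v^2(0)$ and $\int |\Ob(0)|^2$ are both at most $\varepsilon$ is precisely what allows invoking Lemma \ref{lemma-L2-v-plus} and Proposition \ref{theo-dream-T} with the desired quantitative rates.

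Four of the five terms are routine. Cauchy--Schwarz gives $\int_M v(0)\eta\,d\mu_{g_0}\geq -C\varepsilon^{1/2}$. The pure energy term $\int_0^t\int_{B_{g_0}(p,2r)}|\Ob|^2_{g(s)}\,d\mu_{g(s)}\,ds$ is controlled by Proposition \ref{theo-dream-T} (applied on a finite cover of $B_{g_0}(p,2r)$ by sub-balls centered at interior points for which the proposition applies) and is thus at most $C_k(\varepsilon t+t^{k+1})$ for any $k\in\mathbb{N}$. For the scalar-curvature term I use the uniform bound $\sup v_+\leq A$ from Proposition \ref{prop-grad-bd-a-priori} together with the first-variation-of-volume identity $\int_0^t\int_{B_{g_0}(p,2r)}\R_{g(s)}\,d\mu_{g(s)}\,ds=\vol_{g_0}B_{g_0}(p,2r)-\vol_{g(t)}B_{g_0}(p,2r)$, which tends to zero as $t\to 0$ by distance continuity (Proposition \ref{prop:RF_basics}). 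The final term $C' t^{1/2}\sup v_+$ is manifestly $O(t^{1/2})$.

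The main technical step is the mixed term $\int_0^t\int_{B_{g_0}(p,2r)}|s\,\div_{g(s)}\Ob(s)+\Ob(s)(\nabla^{g(s)}u(s))|_{g(s)}\,d\mu_{g(s)}\,ds$. Its second piece is dispatched by the gradient bound $|\nabla u|\leq C$ from Proposition \ref{prop-grad-bd-a-priori}, Cauchy--Schwarz in space, and Proposition \ref{theo-dream-T}. For the divergence piece I first extract an $L^2$-in-space-time bound on $\nabla^{g(s)}\Ob(s)$ by revisiting the proof of Proposition \ref{oh-my-gosh-L2}: integrating inequality (\ref{est-T-heat-subsol}) from Lemma \ref{magic-comput} against $\eta$ in time and retaining the ``good'' term $-|\nabla\Ob|^2\eta$ yields $\int_0^t\int|\nabla^{g(s)}\Ob|^2_{g(s)}\,d\mu_{g(s)}\,ds\leq C(\varepsilon+t^{k+1})$. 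Two successive applications of Cauchy--Schwarz (in space, then in time) then bound the $s\,\div_{g(s)}\Ob(s)$ contribution by $Ct^{3/2}\bigl(\varepsilon^{1/2}+t^{(k+1)/2}\bigr)$.

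To convert the $\eta$-weighted lower bound into a statement on $B_{g_0}(p,r/2)$, I decompose
\[
\int_{B_{g_0}(p,r/2)} v\,d\mu_{g(t)} = \int_M v\eta\,d\mu_{g(t)} - \int_{B_{g_0}(p,r)\setminus B_{g_0}(p,r/2)} v\eta\,d\mu_{g(t)},
\]
and bound the last integral from above by $\int_{B_{g_0}(p,r)} v_+\,d\mu_{g(t)}$. Lemma \ref{lemma-L2-v-plus} combined with Cauchy--Schwarz gives $\int_{B_{g_0}(p,r)} v_+\leq C(t^{k/2}+\varepsilon^{1/2})$. Choosing $k$ sufficiently large and collecting contributions yields the advertised estimate, with $\varepsilon(t)$ absorbing the volume-continuity quantity $\vol_{g(t)}B_{g_0}(p,2r)-\vol_{g_0}B_{g_0}(p,2r)$ and all positive powers of $t$. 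The bounded-curvature variant is identical, using (\ref{secondat}) in place of (\ref{firstvn}) throughout.
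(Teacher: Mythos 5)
Your overall strategy coincides with the paper's: apply Lemma \ref{lemma-mean-value-v-minus} with $r_1=r/2$, $r_2=r$, bound each of the five terms, and compare $\int_M v\eta\,d\mu_{g(t)}$ with the target $\int_{B_{g_0}(p,r/2)}v\,d\mu_{g(t)}$. The one genuine divergence is your treatment of the mixed term $\int_0^t\int_{B_{g_0}(p,2r)}|s\,\div_{g(s)}\Ob(s)+\Ob(s)(\nabla^{g(s)}u(s))|_{g(s)}\,d\mu_{g(s)}\,ds$. The paper dispatches it in one line using the pointwise interior estimates \eqref{loc-upper-bd-grad-ob} from Proposition \ref{prop-grad-bd-a-priori}: $|\nabla^{g(s)}\Ob(s)|_{g(s)}\leq C_1/s$ and $|\Ob(s)|_{g(s)}\leq C_0/\sqrt{s}$, combined with the uniform gradient bound $|\nabla^{g(s)}u(s)|_{g(s)}\leq C$, give a pointwise bound $C(1+s^{-1/2})$ on the integrand, so the whole space--time integral is $O(t^{1/2})$ and is absorbed into $\ep(t)$. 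Your route — retaining the good term in Proposition \ref{oh-my-gosh-L2} to extract an $L^2$-in-space-time bound on $\nabla\Ob$, then Cauchy--Schwarz twice — is correct but considerably longer, and the extra $\varepsilon^{1/2}$-scaling it buys for this term is not needed, since the initial term $\int_M v(0)\eta$ and the $v_+$ error on the annulus already contribute $\varepsilon^{1/2}$. A minor bookkeeping remark: in your decomposition $\int_{B_{g_0}(p,r/2)}v=\int_Mv\eta-\int_{B_{g_0}(p,r)\setminus B_{g_0}(p,r/2)}v\eta$, the cut-off equals $e^{-kt}$ (not $1$) on the inner ball, so as in the paper an $e^{c(r)t}$ factor should appear to make the first identity exact; the discrepancy is $O(t)$ and harmless, but should be stated.
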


\begin{proof}
According to Proposition \ref{theo-dream-T} with $k=0$ and Lemma \ref{lemma-mean-value-v-minus} applied to $r_1:=r/2$ and $r_2:=r$ so that $\eta(\cdot,t)$  is constant in space on $B_{g_0}(o,r/2)\subset B_{g(t)}(o,r/2)$,  and Lemma \ref{lemma-L2-v-plus}, we get
 \begin{eqnarray*}
&&\int_{B_{g_0}(p,r/2)}v(t)\,d\mu_{g(t)} \\
&& = e^{c(r)t} \int_{B_{g_0}(p,r/2)}\eta(t)v(t)\,d\mu_{g(t)} \\
&& =  e^{c(r)t} \int_{M}\eta(t)v(t)\,d\mu_{g(t)} -e^{c(r)t}\int_{  B_{g_0}(p,r ) \backslash  B_{g_0}(p,r/2)}\eta(t)v(t)\,d\mu_{g(t)}
 \\
&&\geq -C_0(n,r,A,K,D_0)\varepsilon^{1/2}-C_0(n,r,A,K,D_0)(t+\varepsilon)\\
&& \ \ \ \ \ \ -C(n,r,A,K,D_0)\int_{0}^t\int_{B_{g_0}(p,r)}\R_{g(s)}\,d\mu_{g(s)}ds-C(n,r,A,K,D_0)t^{1/2},
\\
&& \ \ \ \ \ \ -C(n)\int_{0}^t\int_{B_{g_0}(p,2r)}|s\div_{g(s)}\Ob(s)+\Ob(s)(\nabla^{g(s)}u(s))|_{g(s)}\,d\mu_{g(s)}ds\\
\end{eqnarray*}
where we have used that $v_+$ is  bounded thanks to  the interior estimate from Proposition \ref{prop-grad-bd-a-priori}. Now, thanks to the interior estimate on $\nabla^{g(t)}\Ob(t)$ from [\eqref{loc-upper-bd-grad-ob}, Proposition \ref{prop-grad-bd-a-priori}], one ends up with
 \begin{equation*}
\begin{split}
\int_{B_{g_0}(p,r/2)}v(t)\,d\mu_{g(t)}&\geq -C_0(n,r)\varepsilon^{1/2}-C(n,A,K,D_0,r)t^{1/2}\\
&\quad-C(n,A,K,D_0)\int_{0}^t\int_{B_{g_0}(p,r)}\R_{g(s)}\,d\mu_{g(s)}ds.
\end{split}
\end{equation*}

Now observe that:
\begin{equation*}
-\int_{0}^t\int_{B_{g_0}(p,2r)}\R_{g(s)}\,d\mu_{g(s)}ds=\vol_{g(t)}B_{g_0}(p,2r)-\vol_{g_0}B_{g_0}(p,2r).
\end{equation*}
Therefore, if 
 \begin{equation*}
  \ep(t):= -C(n,A,K,D_0)(\vol_{g(t)}B_{g_0}(p,2r)-\vol_{g_0}B_{g_0}(p,2r)) +C(n,A,K,D_0,r)t^{1/2 },
  \end{equation*}
   then $\ep(t)  \to 0$ as $t\downto 0, $  by smoothness.   
\end{proof}
 
\section{Pointwise decay on the limit solution}\label{sec-pt-dec-sol}

\noindent In this section, we   use the notation from section \ref{sec-equations}. 
We consider $u$ as constructed in \eqref{eq:def-limit-solution} solving \eqref{eq:eqn-limit-solution}, i.e.~$u$ is the local smooth limit as $i\to \infty$ of solutions $u^i$ to \eqref{eqn-heat-sing-bis-rep} along the translated Ricci flows $(0,T)\ni t \mapsto g_i(t).$ We denote by $\Ob^i(t)$, respectively $v^i(t)$, the corresponding tensor $\Ob(t)$, respectively $v(t)$, associated to the solution $u^i(t)$ for each $i$. We will however stick to $\Ob(t)$ and $v(t)$ as long as the solution $u(t)$ is concerned. 

Moreover, we assume that the assumptions of Theorem \ref{main-I} hold. In particular, the assumptions of Lemma \ref{magic-comput} with $U:=B_{d_0}(o,2)\setminus\overline{B_{d_0}(o,1)}$ are satisfied thanks to \cite[Lemma $A.2$]{DSS-II}: we fix $p\in B_{d_0}(o,2)$ and a radius $r>0$ such that $B_{d_0}(p,2r)\subset U$ once and for all.
The first result, explains how to show that the limit as $i\to \infty$ of the sequences $(u^i)_i$ and $(g_i)_i$ exist, and proves  estimates on how 
the initial value $u_0$ of $u$ is attained. Here, the constant $A$ of Sections \ref{section-pt-int-est} and \ref{sec-int-est}  will correspond to a constant $A = C(n,T)$ in view of Proposition \ref{ptwise-bd-ui}. 
\begin{rk} \label{replace2}
In the case that the cone in the conditions of Theorem 
\ref{main-I} is smooth away from the tip, 
 \cite{DSS-1} ensures that  
\begin{eqnarray}
 |\Rm(g(t))|\leq K \ \  \text{ on } \ \ U:=B_{d_0}(o,2)\setminus\overline{B_{d_0}(o,1)} 
 \end{eqnarray}
  for all $t\in (0,\hat T)$ for some constant $K,$ for some $0<\hat T\leq T.$  
  See \cite{DSS-1} for more details.
  \end{rk}

\begin{prop}\label{coro-dist-comp-sol}
Let $(M,g(t))_{t\in (0,T)}$ be a solution satisfying the assumptions in Theorem \ref{main-I}, and let $u^i$, $g_i$  be the solutions constructed in Section   \ref{section-can-fct}. 
On $B_{d_0}(o,3)$, there exists $C\in (0,\infty)$ such that for $t\in(0,T)$:
\begin{equation*}
\left|u^i(t)-\frac{d_{g_i(0)}(o,\cdot)^2}{4}\right|\leq C\sqrt{t}+\varepsilon_i, 
\end{equation*}
In particular, on $B_{d_0}(o,3)\times (0,T)$ 
\begin{equation}\label{eq:u-lower-bound}  
\left|u^i(t)-\frac{d_{g_i(t)}(o,\cdot)^2}{4}\right|\leq C\sqrt{t}+\varepsilon_i, 
\end{equation}
Hence taking a limit in $i$ and using the interior estimates of section \ref{section-pt-int-est} we obtain limits  $u = \lim_{i\to \infty} u^i,$  $g = \lim_{i\to \infty} g_i ,$ $\Ob = \lim_{i\to \infty} \Ob^i$  and  $v = \lim_{i\to \infty} v^i,$ where the limits are locally, smoothly defined  for $t>0,$ and $u(\cdot,t) \to d_0^2(o,\cdot)/4,$ locally uniformly as $t\downto 0.$ 
\end{prop}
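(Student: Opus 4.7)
The plan is to use the interior Hessian estimate for $u^i$ from Proposition \ref{prop-grad-bd-a-priori} to bound $\partial_t u^i$ pointwise by a quantity integrable in time; integrating then yields the advertised $\sqrt{t}$ rate. Concretely, I would first apply Proposition \ref{prop-grad-bd-a-priori} to $u^i$ on $B_{g_i(0)}(o,7/2)$, taking $A=C(n,T)$ from Proposition \ref{ptwise-bd-ui} and observing that $|\Rm(g_i(t))|_{g_i(t)}\leq D_0/(t+t_i)\leq D_0/t$ holds uniformly in $i$. This gives $C_2=C_2(n,D_0,T)$, independent of $i$, with
\[
|\nabla^{g_i(t),\,2}u^i|_{g_i(t)}\leq \frac{C_2}{\sqrt{t}}\quad\text{on } B_{g_i(0)}(o,7/2)\times(0,T).
\]
Tracing and using the equation $\partial_t u^i=\Delta_{g_i(t)}u^i-n/2$ would give $|\partial_t u^i(x,t)|\leq nC_2/\sqrt{t}+n/2$.

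Next, since the Dirichlet data in \eqref{eqn-heat-sing-bis-rep} is the smooth function $u_i$, standard parabolic theory ensures $u^i$ is continuous on $\overline{B_{g_i(0)}(o,4)}\times[0,T)$ with $u^i(\cdot,0)=u_i$. Integrating the pointwise bound on $\partial_t u^i$ from $\delta>0$ to $t$ and sending $\delta\downto 0$ would give
\[
|u^i(x,t)-u_i(x)|\leq 2nC_2\sqrt{t}+\tfrac{n}{2}t\leq C\sqrt{t}
\]
on $B_{g_i(0)}(o,7/2)\times(0,T)$. Combined with item (5) of Proposition \ref{prop-ell-reg}, namely $|u_i-d_{g_i(0)}(o,\cdot)^2/4|\leq 4\varepsilon_i$, and the inclusion $B_{d_0}(o,3)\subset B_{g_i(0)}(o,3)$ (a consequence of $d_{g_i(0)}\leq d_0$ from \eqref{eq:distance-distortion}), this yields the first estimate of the proposition on $B_{d_0}(o,3)\times(0,T)$. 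The second estimate \eqref{eq:u-lower-bound} is then immediate: \eqref{eq:distance-distortion} also gives $d_{g_i(0)}-C'\sqrt{t}\leq d_{g_i(t)}\leq d_{g_i(0)}$, and since both distances are bounded by a fixed constant on $B_{d_0}(o,3)$, one has $|d_{g_i(t)}(o,\cdot)^2-d_{g_i(0)}(o,\cdot)^2|/4\leq C''\sqrt{t}$, which is absorbed into the $C\sqrt{t}$ term.

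For the passage to the limit, I would invoke Arzelà-Ascoli using the uniform-in-$i$ local $C^k$ bounds on $u^i$ for every $k$ provided by Proposition \ref{prop-grad-bd-a-priori} (together with the $L^\infty$ bound of Proposition \ref{ptwise-bd-ui}) on $B_{d_0}(o,4)\times[t_0,T']$ for any $0<t_0<T'<T$; combined with the local smooth convergence $g_i\to g$ on $M\times(0,T)$, this yields a subsequence along which $u^i\to u$ locally smoothly on $B_{d_0}(o,4)\times(0,T)$, with $u$ satisfying $\partial_t u=\Delta_{g(t)}u-n/2$. Passing the first estimate to the limit (using $d_{g_i(0)}\to d_0$ locally uniformly) gives $|u(\cdot,t)-d_0(o,\cdot)^2/4|\leq C\sqrt{t}$ on $B_{d_0}(o,3)\times(0,T)$, whence the local uniform convergence $u(\cdot,t)\to d_0(o,\cdot)^2/4$ as $t\downto 0$. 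Convergence $\Ob^i\to \Ob$ and $v^i\to v$ locally smoothly follows from their definitions together with the local smooth convergence of $u^i$, $g_i$ and their derivatives. The only mild subtlety is the continuity of $u^i$ up to $t=0$ so that the fundamental theorem of calculus may be applied on $[0,t]$; this is guaranteed by the smoothness of the initial-boundary data in \eqref{eqn-heat-sing-bis-rep}. There is no substantial obstacle here: the argument rests entirely on recognising that the apparently singular bound $|\partial_t u^i|=O(1/\sqrt{t})$ is integrable and yields exactly the claimed $\sqrt{t}$ rate of attainment.
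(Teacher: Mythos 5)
Your proof is correct and follows essentially the same approach as the paper: use the interior Hessian estimate from Proposition \ref{prop-grad-bd-a-priori} (with $A$ supplied by Propositions \ref{ptwise-bd-ui} and \ref{prop-ell-reg}) to bound $|\partial_t u^i|\leq C/\sqrt{t}$, integrate in time, and combine with the $L^\infty$ closeness of $u_i$ to $d_{g_i(0)}^2/4$ and the distance distortion estimate \eqref{eq:distance-distortion}. The paper is terser — it does not spell out the tracing step from the Hessian bound to the Laplacian, nor the Arzelà–Ascoli argument for extracting the limit — but there is no genuine difference in strategy.
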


\begin{proof}
Thanks to Propositions \ref{prop-ell-reg}, \ref{ptwise-bd-ui}, Proposition \ref{prop-grad-bd-a-priori} is applicable 
with $A = C(n,T)$ and we get  via the heat equation satisfied by $u^i$:
\begin{equation*}
\left|\partial_tu^i\right|\leq \frac{C_2}{\sqrt{t}},\quad t\in(0,T).
\end{equation*}
In particular, this implies after integration between $0$ and $t$ on $B_{d_0}(o,3)$:
\begin{equation*}
\left|u^i(t)-u^i(0)\right|\leq C\sqrt{t},\quad t\in(0,T).
\end{equation*}
The triangular inequality then gives the expected estimate thanks to [\eqref{linfty-u-init}, Proposition \ref{prop-ell-reg}] on $B_{d_0}(o,3)$: 
\begin{equation*}
\left|u^i(t)-\frac{d_{g_i(0)}(o,\cdot)^2}{4}\right|\leq C\sqrt{t}+\varepsilon_i,\quad t\in(0,T).
\end{equation*}
Combining this estimate with \eqref{eq:distance-distortion} leads to \eqref{eq:u-lower-bound}.
\end{proof}
The next result of this section guarantees that the tensors $\Ob^i(t)$ are decaying pointwise to $0$ as $t$ goes to $0$ with a rate faster than polynomial.

\begin{coro}\label{coro-pointwise-T}
Let $(M,g(t))_{t\in (0,T)}$ be a solution satisfying the assumptions in Theorem \ref{main-I}, and let $u^i$, $g_i$  be the solutions constructed in Section   \ref{section-can-fct}.Then there exists $0<S\leq T$ such that, if  $$\int_{B_{g_i(0)}(o,3)} |\Ob^i(0)|^2_{g_i(0)}\,d\mu_{g_i(0)}\leq\varepsilon_i,$$ then for each $k\geq 0$, there exist $C\in (0,\infty)$ and $C_k\in (0,\infty)$ such that for $t\in[0,S)$ satisfying $B_{g_i(0)}(p,\sqrt{t})\times (2^{-1}t,2t)\subset B_{g_i(0)}(p, 2^{-k}r)\times (0,\min\{S,2^{-2k}r^2/C^2\})$ then for $i\geq 0$:
\begin{equation*}
|\Ob^i(p,t)|_{g_i(t)}^2\leq C_kt^{-n/2}\left(t^{k}+\varepsilon_i\right).
\end{equation*}
In particular, for $k\geq 0$ and $l\geq 0$, there exist $C_{k,l}\in (0,\infty)$ and $T_k\in (0,\infty)$ such that on $B_{d_0}(o, 2)\setminus\overline{B_{d_0}(o,1)}\times(0,T_k)$,
\begin{equation*}
|\nabla^{g(t),\,l}\Ob(t)|_{g(t)}^2\leq C_{k,l}\,t^{k}.
\end{equation*}
\end{coro}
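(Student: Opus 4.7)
The plan is to convert the local $L^2$ integral decay of $\Ob^i$ provided by Proposition \ref{theo-dream-T} into a pointwise bound via a local Moser iteration (mirroring the template used in Corollary \ref{coro-sup-v_+} for $v_+$), and then to promote the pointwise bound on $\Ob$ to bounds on all its covariant derivatives by standard parabolic regularity applied to the linear equation \eqref{eqn-evo-T}.

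First, thanks to \cite[Lemma $A.2$]{DSS-II} in the case $|\Rm|\leq D_1\R$, respectively Remark \ref{replace2} in the case that the cone is smooth away from $o$, the hypotheses of Lemma \ref{magic-comput} hold uniformly in $i$ on $U:=B_{d_0}(o,2)\setminus\overline{B_{d_0}(o,1)}$ for a uniform time $[0,S]$ with $0<S\leq T$. By Lemma \ref{magic-comput} (or its alternative form \eqref{replaceeq}), the function
\begin{equation*}
F_i(t):=\frac{|\Ob^i(t)|^2_{g_i(t)}}{(1+\beta\, t\R_{g_i(t)})^{1/6}}\qquad\text{(respectively $e^{-Ct}|\Ob^i(t)|^2_{g_i(t)}$)}
\end{equation*}
is a non-negative subsolution of the heat equation along $g_i$ on $U\times(0,S)$. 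Note that $F_i$ and $|\Ob^i|^2$ are uniformly comparable because $t\R_{g_i(t)}$ is bounded. Performing a local Nash-Moser iteration on the parabolic cylinder $B_{g_i(0)}(p,\sqrt{t})\times(t/2,2t)$ (exactly as in \eqref{loc-nash-moser}) yields
\begin{equation*}
|\Ob^i(p,t)|^2_{g_i(t)}\leq C(n,D_0)\dashint_{t/2}^{2t}\dashint_{B_{g_i(0)}(p,\sqrt{t})} |\Ob^i(s)|^2_{g_i(s)}\,d\mu_{g_i(s)}\,ds,
\end{equation*}
provided the parabolic cylinder is contained in the annulus. The uniform non-collapsing bound on $g_i(t)$ gives $\vol_{g_i(s)}B_{g_i(0)}(p,\sqrt{t})\geq c\,t^{n/2}$, and Proposition \ref{theo-dream-T} (applied on $B_{g_i(0)}(p,2^{-k}r)$ with $\sqrt{t}\leq 2^{-k}r$) bounds the inner spatial integral by $C_k(s^k+\varepsilon_i)\leq C_k(t^k+\varepsilon_i)$. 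Combining these gives the desired pointwise estimate
\begin{equation*}
|\Ob^i(p,t)|^2_{g_i(t)}\leq C_k t^{-n/2}\bigl(t^k+\varepsilon_i\bigr).
\end{equation*}

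For the second statement, fix $k\geq 0$ large and pass to the limit $i\to\infty$. By Proposition \ref{coro-dist-comp-sol}, $u^i\to u$ and $g_i\to g$ locally smoothly for $t>0$, so $\Ob^i\to\Ob$ locally smoothly; together with $\varepsilon_i\to 0$ this yields $|\Ob(p,t)|^2_{g(t)}\leq C_k\,t^{k-n/2}$ on $U\times(0,T_k)$ for a suitable $T_k>0$. Since $k$ is arbitrary, $|\Ob(t)|$ decays faster than any polynomial in $t$. For the covariant derivative estimates, apply standard interior parabolic regularity (Bernstein/Shi-type arguments) to the linear equation \eqref{eqn-evo-T} satisfied by $\Ob$. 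On the relatively compact subset $U$, the curvature coefficients appearing in the Lichnerowicz Laplacian are uniformly bounded (either by $D_1\R_{g(t)}$ combined with the $tR$ bound, or directly by $K$), so one obtains
\begin{equation*}
|\nabla^{g(t),\,l}\Ob(t)|_{g(t)}\leq C_l\,t^{-l/2}\sup_{B_{d_0}(o,2)\setminus\overline{B_{d_0}(o,1)}}|\Ob(t)|,
\end{equation*}
on slightly smaller subsets. Plugging in the faster-than-polynomial decay of $\Ob$ itself and re-labelling $k$ produces the claimed bound $|\nabla^{g(t),\,l}\Ob(t)|^2\leq C_{k,l}\,t^k$.

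The main technical obstacle is ensuring that the Moser iteration is carried out on parabolic cylinders remaining inside the annular region $U$ where the hypotheses of Lemma \ref{magic-comput} apply, and keeping the constants uniform in $i$. A secondary point is that the smooth convergence $g_i\to g$ only holds for $t>0$, so the limit must be taken at fixed positive $t$ and on fixed compact subsets, which is why the conclusion is phrased on $U\times(0,T_k)$ rather than up to $t=0$.
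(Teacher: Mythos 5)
Your overall strategy — convert the $L^2$ decay of Proposition \ref{theo-dream-T} into a pointwise bound by local Nash--Moser iteration on parabolic cylinders $B_{g_i(0)}(p,\sqrt{t})\times(t/2,2t)$, then handle covariant derivatives — is the same as the paper's, and your use of the non-collapsing to relate the parabolic mean to the $L^2$ integral over the larger ball $B_{g_i(0)}(p,2^{-k}r)$ is precisely how the $t^{-n/2}$ factor is produced. However, there are two places where your route diverges from the paper's and one of them contains an inaccuracy worth flagging.

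For the Moser iteration you choose the weighted quantity $F_i=|\Ob^i|^2/(1+\beta t\R_{g_i(t)})^{1/6}$ from Lemma \ref{magic-comput} as the subsolution. That works, but it forces you to invoke the pinching hypotheses $|\Rm|\leq K\R$ (or $|\Rm|\leq K$) \emph{at the iteration step itself}, and only on $U$. The paper instead observes from Proposition \ref{prop-eqn-evo-obs-tensor} that $(\partial_t-\Delta_{g_i(t)})|\Ob^i|^2\leq -2|\nabla\Ob^i|^2+\tfrac{c(n)D_0}{t}|\Ob^i|^2$, so that $t^{-c(n)D_0}|\Ob^i|^2$ is a genuine heat subsolution using only the global $|\Rm|\leq D_0/t$ bound. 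This is slightly more robust because the subsolution property does not depend on staying inside the annulus; the pinching hypotheses enter only through Proposition \ref{theo-dream-T}. Your variant is fine as long as you are careful that the full parabolic cylinder $B_{g_i(0)}(p,\sqrt{t})\times(t/2,2t)$ sits inside the region where Lemma \ref{magic-comput} applies, as you note.

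The second point concerns the covariant derivative estimates. Your assertion that ``on the relatively compact subset $U$, the curvature coefficients appearing in the Lichnerowicz Laplacian are uniformly bounded (either by $D_1\R_{g(t)}$ combined with the $tR$ bound, or directly by $K$)'' is incorrect in the case $|\Rm|\leq D_1\R$: the relation $t\R_{g(t)}\leq c(n)/K$ from \cite[Lemma A.2]{DSS-II} gives $\R_{g(t)}\leq c(n)/(Kt)$, which blows up as $t\to 0$. A Bernstein/Shi-type argument on the parabolic scale $[t/2,t]$ does still produce the estimate $|\nabla^{l}\Ob(t)|\leq C_l\,t^{-l/2}\sup_{[t/2,t]}|\Ob|$ — because on that time slab $|\Rm|\lesssim 1/t$ and the time length is $\sim t$, so $|\Rm|\cdot(\text{time})$ is bounded and Shi's estimates for $\nabla^m\Rm$ scale correctly — but your stated justification does not capture this. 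The paper sidesteps the issue entirely by interpolating the fast pointwise decay $|\Ob(t)|\leq C_k t^{k/2-n/4}$ against the crude a priori bounds $|\nabla^{m}\Ob(t)|\leq C_m t^{-(m+1)/2}$ already available from Proposition \ref{prop-grad-bd-a-priori}. Either route reaches the conclusion, but yours needs the parabolic rescaling spelled out; a bare claim of uniform boundedness of the coefficients would not be accepted.
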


\begin{proof}
We recall that $U:=B_{d_0}(o,2)\setminus\overline{B_{d_0}(o,1)}$ satisfies condition $\eqref{ass-2-magic}$ of Lemma \ref{magic-comput} thanks to \cite[Lemma A.2]{DSS-II}, after reducing $T>0$ to $S>0$ once if necessary.
Condition \eqref{ass-1-magic} of Lemma \ref{magic-comput} is satisfied by assumption or, alternatively 
$|\Rm(g(t))|\leq K$  on $U:=B_{d_0}(o,2)\setminus\overline{B_{d_0}(o,1)}$ is satisfied in the case of a smooth cone away from the tip, in view of 
Remark \ref{replace2}.
Also, from Proposition \ref{prop-eqn-evo-obs-tensor}, we see  that each $\Ob^i(t)$ satisfies schematically $(\partial_t-\Delta_{g_i(t)})\Ob^i(t)= \Rm(g_i(t)) \ast_{g_i(t)}\Ob^i(t)$. In particular, since $|\Rm(g_i(t))|_{g_i(t)}\leq D_0/t$, $|\Ob^i(t)|^2_{g_i(t)}$ is a subsolution to the following heat equation along the Ricci flow $(g_i(t))_t$:
\begin{equation*} 
\left(\partial_t-\Delta_{g_i(t)}\right)|\Ob^i(t)|^2_{g_i(t)}\leq -2|\nabla^{g_i(t)}\Ob^i(t)|^2_{g_i(t)}+\frac{c(n)D_0}{t}|\Ob^i(t)|^2_{g_i(t)},
\end{equation*}
so that $t^{-c(n)D_0}|\Ob^i(t)|^2_{g(t)}$ is a subsolution to the heat equation.

 Choose  $k>c(n)D_0+n$ and perform a local Nash-Moser iteration  on each ball $B_{g_i(0)}(p,\sqrt{t})\times\left(t,2t\right)\subset B_{g_i(0)}(p,2^{-k}r)\times(0,\min\{S,2^{-2k}r^2/C^2\})$ to get for each $\theta\in(0,1)$,  
\begin{equation}\label{loc-nash-moser-T}
\sup_{B_{g_i(0)}(p,\sqrt{\theta t})\times\left(t(1+\theta),2t\right)}\!\!\!\!\!\!\!\!\!\!\!\!\!\!\!s^{-c(n)D_0}|\Ob^i(s)|^2_{g_i(s)}\leq C(n,\theta,D_0)\, \dashint_t^{2t}\dashint_{B_{g_i(0)}(p,\sqrt{t})}\!\!\!\!\!\!\!\!\!\!\!\!\!\!\!\!\!\!\!s^{-c(n)D_0}|\Ob^i(s)|^2_{g_i(s)}\,d\mu_{g_i(s)}ds.
\end{equation} 
 Again, see for instance 
   \cite{Saloff-Coste-Riem}, \cite{Grig92} or \cite[Theorem 25.2]{Chow2010TheRF}  
   for a proof.
 
Now  apply Proposition  \ref{theo-dream-T} so that the previous inequality \eqref{loc-nash-moser-T} leads to the pointwise bound:  
\begin{equation*}
\begin{split}
\sup_{B_{g_i(0)}(p,\sqrt{\theta t})\times\left(t(1+\theta),2t\right)}&s^{-c(n)C/2}|\Ob^i(s)|^2_{g_i(s)} \\
&\leq C(n,k,\theta,D_0)\big(\tfrac{r^2}{t}\big)^{\frac{n}{2}}\dashint_t^{2t}s^{-c(n)C/2}\dashint_{B_{g_i(0)}(p,2^{-k}r)}|\Ob^i(s)|^2_{g_i(s)}\,d\mu_{g_i(s)}ds\\
&\leq C(n,k,\theta,D_0)t^{-\frac{n}{2} -c(n)C/2}\dashint_t^{2t}s^k+\varepsilon_i\,ds\\
&\leq  C(n,k,\theta,D_0) t^{-n/2-c(n)C/2} \Big( t^k+\varepsilon_i\Big),\\
\end{split}
\end{equation*}
for $t\in(0,\min\{S,2^{-2k}r^2/C^2\})$ as expected.

The corresponding decay in time for $\Ob(t)$ associated to the pointwise limit $u(t)$ follows by letting $\varepsilon_i$ going to $0$ in the previous estimate: this gives us the desired decay on each ball $B_{d_0}(p,2^{-k}r)\subset B_{d_0}(o, 2)\setminus\overline{B_{d_0}(o,1)}$. If $k\geq 0$ is given, $\Ob(t)$ will decay similarly on $B_{d_0}(o, 2)\setminus\overline{B_{d_0}(o,1)}$ by applying the previous bound to a covering of this set by balls of the form $B_{d_0}(p,2^{-k}r)$. Higher covariant derivatives follow by interpolation with interior bounds on covariant derivatives of $\Ob(t)$ given by Proposition \ref{prop-grad-bd-a-priori}.
\end{proof}

We are then in a position to prove a lower bound on the mean value of the function $v(t)$:

\begin{coro}\label{coro-mean-value-II}
Let $(M,g(t))_{t\in (0,T)}$ be a solution satisfying the assumptions in Theorem \ref{main-I},   $v$ the limit of the 
$v^i$'s, from Theorem \ref{coro-dist-comp-sol}.
Let $p\in B_{d_0}(o,2)$ and a radius $r>0$ such that $B_{d_0}(p,2r)\subset B_{d_0}(o,2)\setminus \overline{B_{d_0}(o,1)}$. Then, there exists $0<S \leq T$ , such that for each $k> n/2$ and $r>0$, there exist  $C_k\in (0,\infty)$ and $C\in (0,\infty)$ such that for $t\in(0,\min\{S,2^{-2k}r^2/C^2\})$:
 \begin{equation*}
\begin{split}
\int_{B_{d_0}(p,2^{-k}r)}v(t)\,d\mu_{g(t)}\geq -C_kt^{k/2-n/4}.
\end{split}
\end{equation*}
\end{coro}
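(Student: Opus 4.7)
The strategy is to apply Lemma \ref{lemma-mean-value-v-minus} to each approximating pair $(u^i, g_i)$ from Section \ref{section-can-fct}, with cut-off adapted to (a slight thickening of) the ball $B_{g_i(0)}(p, 2^{-k} r)$, estimate each of the four error terms uniformly in $i$, and then pass to the limit $i\to\infty$ using the local smooth convergence $u^i \to u$, $g_i \to g$ of Proposition \ref{coro-dist-comp-sol}. On the annular region $U = B_{d_0}(o,2) \setminus \overline{B_{d_0}(o,1)}$ the hypotheses of Lemma \ref{magic-comput} are satisfied by \cite[Lemma A.2]{DSS-II}, and in fact $|\Rm(g_i(t))|\leq K$ uniformly on $U$ (either by \cite[Lemma A.2]{DSS-II} or, in the smooth-cone case, by Remark \ref{replace2}), so $\Sc_{g_i(t)}$ is bounded there. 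Moreover Proposition \ref{prop-ell-reg} supplies the initial smallness $\|v^i(0)\|_{L^2}^2 + \|\Ob^i(0)\|_{L^2}^2 \leq C \varepsilon_i$.

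The four error terms appearing in Lemma \ref{lemma-mean-value-v-minus} are controlled as follows. The initial piece $|\int_M v^i(0) \eta \, d\mu_{g_i(0)}|$ is bounded by $C \varepsilon_i^{1/2}$ by Cauchy--Schwarz and so disappears in the limit. The term $\int_0^t \int |\Ob^i|^2 \, d\mu_{g_i(s)} ds$ is dominated, via Proposition \ref{theo-dream-T}, by $C_k (t^{k+1}+ t\varepsilon_i)$ for any $k$. The mixed term $\int_0^t \int |s \div_{g_i(s)} \Ob^i + \Ob^i(\nabla^{g_i(s)} u^i)| \, d\mu_{g_i(s)} ds$ is handled by Cauchy--Schwarz combined with the $L^2$ decay of $\Ob^i$ from Proposition \ref{theo-dream-T} together with the pointwise interior bounds on $\nabla u^i$ and $\nabla \Ob^i$ from Proposition \ref{prop-grad-bd-a-priori}; alternatively, upon taking the limit, one may invoke the pointwise faster-than-polynomial decay of Corollary \ref{coro-pointwise-T} directly. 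In either case this term decays faster than any polynomial in $t$. The curvature error $\int_0^t \int (\sup v_+^i)\, \Sc_{g_i(s)}\, d\mu_{g_i(s)} ds$ equals $(\sup v_+^i) \cdot (\vol_{g_i(0)}(B) - \vol_{g_i(t)}(B))$, which is $O(t \sup v_+^i)$ since $\Sc$ is bounded on $U$. Finally, the remaining term $C'\, t^{1/2} \sup v_+^i$ is controlled via Corollary \ref{coro-sup-v_+}: covering the larger ball $B_{g_i(0)}(p, 2r)$ by a finite number of balls of the form $B_{g_i(0)}(\cdot, 2^{-k'}\rho)$ on which the corollary applies, one obtains $\sup_{B \times [0, s]} v_+^i \leq C_{k'}\, s^{(k'-n/2)/2} + C_{k'}\, \varepsilon_i^{1/2} s^{-n/4}$ on the relevant time interval.

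Passing $i \to \infty$, the $\varepsilon_i$-dependent contributions disappear, and combining the four estimates produces a lower bound of order $-C_k t^{k/2 - n/4 + 1/2}$, which is stronger than (hence implies) the desired $-C_k t^{k/2-n/4}$; the dominant term is the $t^{1/2}\sup v_+$ one. Note that the hypothesis $k > n/2$ is exactly what makes the exponent $k/2 - n/4 > 0$, so $s \mapsto s^{k/2-n/4}$ is increasing on $[0,t]$ and the sup in time is attained at $s=t$. The principal technical nuisance is to upgrade the pointwise bound of Corollary \ref{coro-sup-v_+}, which at first yields control at a single point of $B_{g_i(0)}(p, 2^{-k}r)$, to a uniform $L^\infty$ bound on the slightly larger ball $B_{g_i(0)}(p, 2r)$ over which the sup in Lemma \ref{lemma-mean-value-v-minus} is taken; this is achieved by the finite covering mentioned above, at the cost of reducing the radius on which we finally integrate $v(t)$ so as to arrive at the statement on $B_{d_0}(p, 2^{-k} r)$.
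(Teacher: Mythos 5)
Your overall strategy coincides with the paper's: apply Lemma \ref{lemma-mean-value-v-minus} to the approximating pairs $(u^i,g_i)$, estimate the four error terms using Propositions \ref{theo-dream-T}, \ref{prop-grad-bd-a-priori}, Corollaries \ref{coro-pointwise-T} and \ref{coro-sup-v_+}, invoke Corollary \ref{mean-value-coro-I} (and Cheeger--Colding volume continuity) for the initial term, and then let $\varepsilon_i\to 0$. The covering argument you describe to upgrade the pointwise bound of Corollary \ref{coro-sup-v_+} to a supremum over the whole ball is also what is implicitly needed, and the dichotomy in how you track the mixed term is fine.

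There is, however, a genuine gap in your treatment of the curvature error term $\int_{0}^t\int_{B}\big(\sup_{B\times [0,s]}v_+\big)\R_{g(s)}\,d\mu_{g(s)}\,ds$. You assert ``$\Sc$ is bounded on $U$'' and attribute this in part to \cite[Lemma A.2]{DSS-II}. That lemma is quoted in the paper only as supplying condition \eqref{ass-2-magic} of Lemma \ref{magic-comput}, namely the \emph{scale-invariant} bound $t\,\R_{g(t)}\le c(n)/K$ on $U$ — equivalently $\R_{g(t)}\le c(n)/(Kt)$, which blows up as $t\searrow 0$. A uniform bound $|\Rm(g(t))|\le K$ on $U$ is available only in the smooth-cone alternative of Theorem \ref{main-I}, via Remark \ref{replace2}; in the alternative $|\Rm|\le D_1\R$ with only Reifenberg regularity of the cone, no such bound is known at this stage (indeed, Theorem \ref{main-iii} later proves the curvature is bounded only through an iteration that starts from the $C/t$ bound). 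As a consequence your claim that the curvature error is $O\big(t\sup v_+\big)$ is unjustified in the general case. Also, incidentally, the quantity $\sup_{B\times[0,s]}v_+$ depends on $s$, so the "equals" you write when pulling it out of the time integral should be an inequality at best.

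The correct way to handle this term, and what the paper does, is to keep the supremum inside the integral as a function of $s$, insert the pointwise bound $\sup_{B\times[0,s]}v_+\le C_k\,s^{k/2-n/4}$ coming from Corollary \ref{coro-sup-v_+} (after $\varepsilon_i\to 0$), and use the rough bound $\R_{g(s)}\le C/s$ together with the volume bound on $B$. This yields
\[
\int_{0}^t\int_{B}\Big(\sup_{B\times [0,s]}v_+\Big)\R_{g(s)}\,d\mu_{g(s)}\,ds
\;\le\; C_k\int_0^t s^{k/2-n/4-1}\,ds
\;=\; C_k'\,t^{k/2-n/4},
\]
which is precisely where the hypothesis $k>n/2$ is used: not (as you say) to ensure that the $\sup$ over time is attained at $s=t$, but to ensure that the exponent $k/2-n/4-1>-1$ so that this singular time integral converges. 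Note also that, with this corrected estimate, the dominant (weakest-decaying) term is this curvature error, giving $t^{k/2-n/4}$, not the $t^{1/2}\sup v_+$ term, which decays like $t^{1/2+k/2-n/4}$. With that replacement the rest of your argument goes through and reproduces the paper's proof.
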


\begin{proof}
We recall that $U:=B_{d_0}(o,2)\setminus\overline{B_{d_0}(o,1)}$ satisfies condition $\eqref{ass-2-magic}$ of Lemma \ref{magic-comput} thanks to \cite[Lemma A.2]{DSS-II}, after reducing $T>0$ to $S>0$ once if necessary.
Condition \eqref{ass-1-magic} of Lemma \ref{magic-comput} is satisfied by assumption or, alternatively 
$|\Rm(g(t))|\leq K$  on $U:=B_{d_0}(o,2)\setminus\overline{B_{d_0}(o,1)}$ is satisfied in the case of a smooth cone away from the tip, in view of 
Remark \ref{replace2}.
For $k> n/2$, let us apply Proposition \ref{theo-dream-T}  for  $\varepsilon=\varepsilon_i$ for each $i$ together with Lemma \ref{lemma-mean-value-v-minus} applied to $r_1:=2^{-k-1}r$ and $r_2:=2^{-k}r$ so that each cut-off function $\eta^i(\cdot,t)$ is constant on $B_{g_i(0)}(p,2^{-k-1}r)\subset B_{g_i(t)}(p,2^{-k-1}r)$ and let us pass to the limit as $\varepsilon_i$ converges to $0$ to get for $0<t'<t$,
 \begin{equation*}
\begin{split}
\int_{B_{d_0}(p,2^{-k-1}r)}&v\,d\mu_{g(t)}\geq \\
&\int_{M}\eta v\,d\mu_{g(t')} -C_kt^{k+1}-\int_{0}^t\int_{B_{d_0}(o,2^{1-k}r)}\left(\sup_{B_{d_0}(p,2^{1-k}r)\times [0,s]}v_+\right)\R_{g(s)}\,d\mu_{g(s)}ds\\
&\quad-C_kt^{k/2+1}-Ct^{1/2}\left(\sup_{B_{d_0}(p,2^{1-k}r)\times [0,t]}v_+\right),
\end{split}
\end{equation*}
where we have used interior estimates on $\nabla^{g(t)}\Ob(t)$ from [\eqref{loc-upper-bd-grad-ob}, Proposition \ref{prop-grad-bd-a-priori}] together with Corollary \ref{coro-pointwise-T}. Now, Corollary \ref{coro-sup-v_+} applied to $\varepsilon=\varepsilon_i$ ensures once $\varepsilon_i$ is sent to $0$ that:
 \begin{equation*}
\begin{split} 
\int_{B_{d_0}(p,2^{-k-1}r)}v\,d\mu_{g(t)}&\geq \int_{M}\eta v\,d\mu_{g(t')} -C_kt^{k+1}-C_k\int_{0}^t\int_{B_{d_0}(p,2^{1-k}r)}s^{k/2-n/4}\R_{g(s)}\,d\mu_{g(s)}ds\\
&\quad-C_kt^{k/2+1}-C_kt^{1/2+k/2-n/4}\\
&\geq \int_{M}\eta v\,d\mu_{g(t')} -C_kt^{k/2+1/2-n/4}-C_k\int_{0}^ts^{k/2-n/4-1}\,ds\\
&\geq \int_{M}\eta v\,d\mu_{g(t')} -C_kt^{k/2-n/4},
\end{split}
\end{equation*}
for some positive constant $C_k$ that may vary from one estimate to another. Here we have used the rough bound $\R_{g(s)}\leq C/s$ in the penultimate line. Invoking Corollary \ref{mean-value-coro-I}, using Cheeger-Colding's volume convergence Theorem,  and letting $t'$ going to $0$,  leads to the desired result.
\end{proof}

As a consequence of the previous Corollary \ref{coro-mean-value-II}, we get a faster than polynomial decay on $v$ together with the expected gradient bound from [\eqref{grad-bd-main-I}, Theorem \ref{main-I}]:
\begin{coro}\label{grad-low-bd-v}
Let $(M,g(t))_{t\in (0,T)}$ be a solution satisfying the assumptions in Theorem \ref{main-I}, $v$ the limit of the 
$v^i$'s, from Theorem \ref{coro-dist-comp-sol}. 
For $k\geq 0$, there exists $C>0$ and $T_k>0$ such that on $B_{d_0}(o, 2)\setminus\overline{B_{d_0}(o,1)}\times(0,T_k)$:
 \begin{equation*}
\begin{split}
|v(t)|\leq C_kt^{k}.
\end{split}
\end{equation*}
In particular, 
\begin{equation*}
\begin{split}
||\nabla^{g(t)}u(t)|^2_{g(t)}-u(t)|\leq Ct.
\end{split}
\end{equation*}
\end{coro}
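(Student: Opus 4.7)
The goal is to promote two one-sided bounds on $v$ — the pointwise upper bound on $v_+$ from Corollary \ref{coro-sup-v_+} (after sending $\varepsilon_i \to 0$) and the integral lower bound on $v$ from Corollary \ref{coro-mean-value-II} — into a two-sided pointwise bound on $v$, using that the source $|\Ob|^2$ decays faster than any polynomial (Corollary \ref{coro-pointwise-T}). Since $v_+$ already decays faster than any polynomial pointwise, the content of the first assertion $|v|\leq C_k t^k$ amounts to a pointwise upper bound on $v_-:=\max(-v,0)$. The gradient bound will then fall out directly from Definition \ref{defn-v}.

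\textbf{Step 1: PDE for $v_-$.} Since $v$ satisfies $(\partial_t-\Delta_{g(t)})v = -2|\Ob|^2$ (Proposition \ref{prop-evo-eqn-v}), and $\phi(v) := \max(-v,0)$ is a convex, non-increasing function of $v$, the chain rule for convex functions (discarding the non-negative term $\phi''(v)|\nabla v|^2$) gives
\[
(\partial_t-\Delta_{g(t)})v_- \leq \phi'(v)\cdot(-2|\Ob|^2) = 2|\Ob|^2\,\mathbf{1}_{\{v<0\}} \leq 2|\Ob|^2,
\]
so $v_-\geq 0$ is a non-negative subsolution of a heat equation along $g(t)$ with a source term decaying faster than any polynomial in $t$ on the annulus $B_{d_0}(o,2)\setminus\overline{B_{d_0}(o,1)}$.

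\textbf{Step 2: Integral bound on $v_-$.} For a small ball $B\subset B_{d_0}(o,2)\setminus\overline{B_{d_0}(o,1)}$ one writes
\[
\int_B v_-\, d\mu_{g(t)} = \int_B v_+\, d\mu_{g(t)} - \int_B v\, d\mu_{g(t)}.
\]
In the limit $\varepsilon_i\to 0$, Corollary \ref{coro-sup-v_+} gives $v_+\leq C_k t^{(k-n/2)/2}$ pointwise on $B$, so the first term is bounded by $C_k t^{(k-n/2)/2}\vol(B)$. The second term is estimated by Corollary \ref{coro-mean-value-II} by $C_k t^{k/2-n/4}$. Choosing $k$ as large as desired, one obtains $\int_B v_-\,d\mu_{g(t)}\leq C_m t^m$ for every $m\geq 0$.

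\textbf{Step 3: Nash--Moser upgrade and conclusion.} Exactly as in the proof of Corollary \ref{coro-sup-v_+}, apply the local parabolic mean value inequality for non-negative subsolutions on parabolic cylinders of scale $\sqrt{t}$, now allowing for the bounded non-negative source $2|\Ob|^2$:
\[
\sup_{B_{g_0}(p,\sqrt{\theta t})\times[(1+\theta)t,2t]} v_-^2 \leq C(n,\theta,D_0)\dashint_t^{2t}\dashint_{B_{g_0}(p,\sqrt{t})} v_-^2\, d\mu_{g(s)}ds + C\bigl(t\sup |\Ob|^2\bigr)^2.
\]
A single $L^1\to L^2$ bootstrap, together with Step 2, produces arbitrarily fast polynomial decay of $\dashint v_-^2$; combined with the arbitrarily fast polynomial decay of $\sup|\Ob|^2$ from Corollary \ref{coro-pointwise-T}, this yields $v_-(p,t)\leq C_k t^k$ for every $k\geq 0$ on $B_{d_0}(o,2)\setminus\overline{B_{d_0}(o,1)}\times(0,T_k)$. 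Combining with the pointwise bound on $v_+$ gives $|v|\leq C_k t^k$. For the \emph{in particular} assertion, Definition \ref{defn-v} gives
\[
\bigl||\nabla^{g(t)}u|_{g(t)}^2-u\bigr| \leq |v| + t^2 |R_{g(t)}| + 2t\,|\tr_{g(t)}\Ob|,
\]
and the first part with $k=1$, the curvature bound $|R_{g(t)}|\leq C(n)D_0/t$ (so $t^2|R_{g(t)}|\leq C(n)D_0 t$), and Corollary \ref{coro-pointwise-T} yield $||\nabla^{g(t)}u|^2-u|\leq Ct$.

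\textbf{Main obstacle.} The only delicate point is running the Nash--Moser iteration on $v_-$ along a time-dependent metric with a (small) source; however, the metric equivalence on the annulus provided by Proposition \ref{prop:RF_basics}, together with the interior regularity of Proposition \ref{prop-grad-bd-a-priori}, reduces everything to the same iteration already carried out for $v_+$ in Corollary \ref{coro-sup-v_+}, so no new analytic ingredient is required.
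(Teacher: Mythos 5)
Your proof is correct, but it takes a genuinely different route from the paper's. The paper's argument does not run a Nash--Moser iteration on $v_-$ at all. Instead it observes that the identity \eqref{beautiful-covid19-bis} of Lemma \ref{lemma-gen-form-exp-ric-pin} expresses the full differential
\[
dv = 2\bigl(t\,\div_{g(t)}\Ob(t)+\Ob(t)(\nabla^{g(t)}u(t),\cdot)\bigr),
\]
so that, once Corollary \ref{coro-pointwise-T} (and the interior derivative bounds of Proposition \ref{prop-grad-bd-a-priori}) give faster-than-polynomial decay of $\Ob$ and $\nabla\Ob$, the quantity $\sup|\nabla^{g(t)} v|$ itself decays faster than any polynomial. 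This is combined with the two-sided control on the mean value $\dashint_B v$ (Lemma \ref{lemma-L2-v-plus} with $\varepsilon=0$ for the upper side, Corollary \ref{coro-mean-value-II} for the lower side) and an elementary mean value theorem estimate $|v(x)-\dashint_B v|\leq \mathrm{diam}(B)\sup|\nabla v|$ to conclude the pointwise decay of $|v|$ in one stroke, with no further PDE iteration. Your approach decomposes $v=v_+-v_-$, derives a subsolution inequality for $v_-$ with the exponentially small source $2|\Ob|^2$, obtains an $L^1$ bound on $v_-$ from the $L^\infty$ bound on $v_+$ and the signed integral bound on $v$, and then rerun the Nash--Moser mean value inequality with source term (as was done for $v_+$ in Corollary \ref{coro-sup-v_+}). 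Both arguments are valid. Your version reuses the parabolic machinery already set up and avoids explicit use of the gradient identity at this stage; the paper's version is shorter and avoids a second iteration by exploiting the fact that the gradient of $v$, unlike $v$ itself, is controlled directly by $\Ob$. One small gap in your write-up: in Step 3 the $L^1\to L^2$ bootstrap implicitly uses that $v_-$ is uniformly bounded on the annulus in order to write $\dashint v_-^2\leq\|v_-\|_\infty\dashint v_-$; this follows from Definition \ref{defn-v}, Proposition \ref{prop-grad-bd-a-priori}, and the curvature bounds, and should be stated explicitly. The treatment of the \emph{in particular} assertion is the same in both proofs.
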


\begin{proof}
According to Corollary \ref{coro-pointwise-T}, the tensor $\Ob(t)$ decays faster than any polynomial in time. By interior estimates, so are its covariant derivatives, a fact that lets us conclude that the gradient of the function $v$ does decay as fast as any polynomial as $t$ goes to $0$ thanks to [\eqref{beautiful-covid19-bis}, Lemma \ref{lemma-gen-form-exp-ric-pin}]. Now the combination of Lemma \ref{lemma-L2-v-plus} with $\varepsilon=0$ and Corollary \ref{coro-mean-value-II} shows that the mean value of $v$ on balls $B_{d_0}(p,2^{-k}r)$ decays as fast as $t^{k/2-n/4}$. The mean value theorem then implies the expected pointwise behavior on $v$.

By recalling the definition of $v$, Definition \ref{defn-v}, the previously established rate on $v(t)$ as $t$ goes to $0$ shows that the difference $|\nabla^{g(t)}u(t)|^2_{g(t)}-u(t)$ is measured by $t^2\R_{g(t)}$ up to an error that decays faster than any polynomial. By assumption $t|\Rm(g(t))|_{g(t)} \leq D_0$  on the flow, and hence, this implies the result. 
\end{proof}

We can finally prove exponential decay of the tensor $\Ob(t)$ together with its covariant derivatives as $t$ goes to $0$:
\begin{theo}\label{theo-exp-dec}
Let $(M,g(t))_{t\in (0,T)}$ be a solution satisfying the assumptions in Theorem \ref{main-I},  $\Ob$ the limit of the 
$\Ob^i$'s, from Theorem \ref{coro-dist-comp-sol}. 
For $k\geq 0$, there exists $C\in (0,\infty)  $ and $C_k\in (0,\infty)$ such that on $B_{d_0}(o,2)\setminus \overline{B_{d_0}(o,1)}\times(0,\min\{T,C^{-2}\})$:
 \begin{equation*}
\begin{split}
|\nabla^{g(t),\,k}\Ob(t)|_{g(t)}\leq e^{-\frac{C_k}{t}}.
\end{split}
\end{equation*}
\end{theo}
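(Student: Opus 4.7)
The strategy is to upgrade the faster-than-polynomial pointwise decay of $\Ob$ from Corollary \ref{coro-pointwise-T} to genuine exponential decay via a Gaussian-type barrier argument. Since the proofs of Proposition \ref{theo-dream-T} and Corollary \ref{coro-pointwise-T} are local in nature, they go through verbatim on any slightly enlarged annulus $\Omega' := B_{d_0}(o, 2 + \delta_0) \setminus \overline{B_{d_0}(o, 1 - \delta_0)}$ for some $\delta_0 > 0$, yielding $|\nabla^{g(t),l}\Ob(t)|_{g(t)}^2 \leq C_{k,l} t^k$ on $\Omega' \times (0, T_k)$ for every $k, l \geq 0$. Combining Proposition \ref{prop-eqn-evo-obs-tensor} with Kato's inequality gives
\[
(\partial_t - \Delta_{g(t)})|\Ob|^2_{g(t)} \leq -2|\nabla^{g(t)}\Ob|^2_{g(t)} + C(n)|\Rm(g(t))|_{g(t)} |\Ob|^2_{g(t)} \leq \tfrac{c_1}{t}|\Ob|^2_{g(t)},
\]
with $c_1 = c_1(n, D_0)$. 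Hence for $c_0 > c_1$, the quantity $v := t^{-c_0}|\Ob|^2_{g(t)}$ is a non-negative subsolution of the heat equation $(\partial_t - \Delta_{g(t)})u = 0$ on $\Omega' \times (0, S)$; the polynomial decay ensures $v(\cdot, t) \to 0$ uniformly on $\Omega'$ as $t \downarrow 0$ and $v \leq M := \sup_{\Omega' \times (0, S)} v < \infty$.

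The core of the argument is a two-stage comparison using the maximum principle. Let $w$ be the solution of $(\partial_t - \Delta_{g(t)}) w = 0$ on $\Omega' \times (0, S)$ with $w(\cdot, 0) = 0$ and $w|_{\partial\Omega'} = M$. First, applying the maximum principle on $\Omega' \times [\varepsilon, S]$ and passing to the limit $\varepsilon \downarrow 0$ (using the uniform vanishing of $v$ at $t = 0$), we obtain $v \leq w$ on $\Omega' \times (0, S)$. Second, construct a Gaussian barrier $\Phi(x, t) := M \exp\bigl(-\psi(x)/(Bt)\bigr)$, where $\psi : \Omega' \to [0, \infty)$ is a smooth function vanishing in a neighborhood of $\partial\Omega'$ and bounded below by some $\psi_0 > 0$ on $\Omega = B_{d_0}(o, 2) \setminus \overline{B_{d_0}(o, 1)}$ (for instance $\psi = \psi_0 \chi^2$ for a smooth cutoff $\chi$). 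A direct computation gives
\[
(\partial_t - \Delta_{g(t)})\Phi = \frac{\Phi}{Bt^2}\bigl[\psi - B^{-1}|\nabla^{g(t)}\psi|_{g(t)}^2 + Bt\,\Delta_{g(t)}\psi\bigr],
\]
which is non-negative for $B$ chosen large in terms of $\sup_{\Omega'}|\nabla\psi|_{g(t)}^2/\psi$ and $\sup |\Delta_{g(t)}\psi|$ (both finite by the construction of $\psi$ and the bounded geometry of $g(t)$ on $\Omega'$). Since $\Phi|_{\partial\Omega'} = M = w|_{\partial\Omega'}$ and $\Phi(x, 0^+) \geq 0 = w(x, 0)$, the maximum principle gives $w \leq \Phi$ on $\Omega' \times (0, S)$. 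Restricting to $x \in \Omega$, where $\psi(x) \geq \psi_0$, one concludes
\[
|\Ob(x, t)|_{g(t)}^2 = t^{c_0} v(x, t) \leq M t^{c_0} e^{-\psi_0/(Bt)} \leq e^{-2C_0/t}
\]
for $t$ sufficiently small and any $C_0 < \psi_0/(2B)$, establishing the case $k = 0$.

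For $k \geq 1$, standard parabolic interior estimates applied to the Lichnerowicz heat equation of Proposition \ref{prop-eqn-evo-obs-tensor}, on slightly shrunken sub-annuli and with a rescaling at the parabolic scale $\sqrt{t}$, give
\[
|\nabla^{g(t), k}\Ob(x, t)|_{g(t)} \leq C_k\, t^{-k/2} \sup_{B_{g(t)}(x, \sqrt{t}) \times [t/2, t]} |\Ob|_{g(t)} \leq C_k\, t^{-k/2} e^{-C_0/t} \leq e^{-C_k/t}
\]
for any $C_k < C_0$ and $t$ small enough; a routine adjustment of constants, combined with the polynomial decay at order two, extends these bounds to the full range $t \in (0, \min\{T, C^{-2}\})$. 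The main technical obstacle is the rigorous verification of the two maximum-principle comparisons (for $v \leq w$ and $w \leq \Phi$) given the time-dependent metric $g(t)$ and the non-classical limit-sense attainment of the initial value for $v$. Both are handled using the uniform vanishing of $v$ as $t \to 0$ supplied by Corollary \ref{coro-pointwise-T}, together with uniform bounds on the geometry of $(\Omega', g(t))$ — uniform equivalence with $g(0)$ and bounded covariant derivatives of $g(t)$ on $\Omega'$ — available under both alternative hypotheses of Theorem \ref{main-I} (by Remark \ref{replace2} in the smooth-cone case, and by the pinching assumption combined with $t\R_{g(t)} \leq c/D_1$ in the other case).
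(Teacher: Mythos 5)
Your approach is a genuinely different route from the paper's. Where the paper multiplies $|\Ob|^2/(1+\beta t \R_{g(t)})^{1/6}$ by a Perelman cutoff $\eta$, invokes the Duhamel reproducing formula against the Dirichlet heat kernel $K_D$ (with potential $-\R_{g(t)}$), and then calls on the short-time Gaussian upper bounds of Lee--Tam \cite{Lee-Tam-loc-max-ppe} to localize the contribution of the gradient-of-cutoff term to a spatially separated annulus and thereby integrate out an $e^{-c/(t-s)}$ factor, you work instead with $v = t^{-c_0}|\Ob|^2$ as a clean heat subsolution on the enlarged annulus $\Omega'$ and run a direct two-stage maximum-principle comparison against a Gaussian barrier $\Phi = M\exp(-\psi/(Bt))$. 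That is an attractive alternative because it avoids citing the Lee--Tam heat-kernel estimate; both arguments feed in the faster-than-polynomial decay from Corollary~\ref{coro-pointwise-T} as the essential input and both extract the exponential rate from a Gaussian mechanism.

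There is, however, a genuine gap in the barrier step. You assert that $(\partial_t - \Delta_{g(t)})\Phi \geq 0$ once $B$ is chosen large in terms of $\sup|\nabla\psi|^2_{g(t)}/\psi$ and $\sup|\Delta_{g(t)}\psi|$, ``both finite by the bounded geometry of $g(t)$ on $\Omega'$, available under both alternative hypotheses of Theorem~\ref{main-I}.'' This is not quite right, on two counts. First, in the pinched (non-smooth-cone) case the curvature on $\Omega'$ is only bounded by $c/t$ (from the pinching and $t\R_{g(t)}\leq c/D_1$ via \cite[Lemma A.2]{DSS-II}); Shi's estimates then control $|\nabla^{g(t)}\Ric|$ only by $Ct^{-3/2}$, so after integrating the evolution of the Christoffel symbols one finds $|\Delta_{g(t)}\psi|$ can grow like $t^{-1/2}$, not stay bounded --- the ``uniform bounds on covariant derivatives of $g(t)$'' you invoke do not hold in this case. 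Second, and more fundamentally, the supersolution inequality
\begin{equation*}
\psi - B^{-1}|\nabla^{g(t)}\psi|^2_{g(t)} + t\,\Delta_{g(t)}\psi \;\geq\; 0
\end{equation*}
must hold on all of $\Omega'$, not merely where $\psi \geq \psi_0$. In the transition region near $\partial\Omega'$, $\psi$ is arbitrarily small while $t\,\Delta_{g(t)}\psi$ is generically of order $t$ (or $t^{1/2}$ in the pinched case) and can be negative; your suggested $\psi = \psi_0\chi^2$ gives $\Delta\psi = 2\psi_0(|\nabla\chi|^2 + \chi\Delta\chi)$, and near $\{\chi = 0\}$ the sign of $\chi\Delta\chi$ is not controlled, so $\psi + t\Delta_{g(t)}\psi$ can be negative on a shrinking-but-nonempty neighbourhood of $\partial\Omega'$ for every $t>0$. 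The maximum-principle comparison $w \leq \Phi$ therefore does not go through as stated: $\Phi$ fails to be a supersolution exactly in the region where you have no trivial comparison. The paper's route sidesteps this by never needing a supersolution near the boundary: the Duhamel term involving $K_D$ tested against the annulus $B_{d_0}(p,2r)\setminus B_{d_0}(p,r/2)$ carries the Gaussian smallness automatically, because the spatial separation $\geq r/2$ from $p$ enters the heat-kernel exponent. To repair your argument you would need to prescribe $\psi$ so that $\Delta_{g(t)}\psi \geq 0$ uniformly for small $t$ near $\partial\Omega'$ (and verify this survives the $O(t^{-1/2})$ drift of the Christoffel symbols), or perturb the barrier to absorb the bad region; neither is addressed in your write-up.
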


\begin{proof}
We recall that $U:=B_{d_0}(o,2)\setminus\overline{B_{d_0}(o,1)}$ satisfies condition $\eqref{ass-2-magic}$ of Lemma \ref{magic-comput} thanks to \cite[Lemma A.2]{DSS-II}, after reducing $T>0$ to $S>0$ once if necessary.
Condition \eqref{ass-1-magic} of Lemma \ref{magic-comput} is satisfied by assumption or, alternatively 
$|\Rm(g(t))|\leq K$  on $U:=B_{d_0}(o,2)\setminus\overline{B_{d_0}(o,1)}$ is satisfied in the case of a smooth cone away from the tip, in view of 
Remark \ref{replace2}.
Let $p\in B_{d_0}(o,2)$ such that for some $r>0$, $B_{d_0}(p,2r)\subset B_{d_0}(o,2)\setminus\overline{B_{d_0}(o,1)}$ and let $\eta$ be a Perelman type cut-off function with respect to the point $p$, $r_1:=r/2$ and $r_2:=r$ and let us compute the heat operator of $\eta |\Ob(t)|^2_{g(t)}(1+\beta t\R_{g(t)})^{-1/6},$ in the case that condition \eqref{ass-1-magic} of Lemma \ref{magic-comput} is satisfied, with the help of Lemma \eqref{magic-comput}:
\begin{equation*}
\left(\partial_t-\Delta_{g(t)}\right)\left(\eta\frac{|\Ob(t)|^2_{g(t)}}{(1+\beta t\R_{g_i(t)})^{1/6}}\right)\leq -2g(t)\left(\nabla^{g(t)}\eta,\nabla^{g(t)}\left(\frac{|\Ob(t)|^2_{g(t)}}{(1+\beta t\R_{g(t)})^{1/6}}\right)\right).
\end{equation*}
In particular, we can artificially add and subtract the scalar curvature along the Ricci flow $g(t)$ times $\eta |\Ob(t)|^2_{g(t)}(1+\beta t\R_{g(t)})^{-1/6}$ to get, thanks to the parabolic maximum principle, for $t\in[t_i,T]$, $(t_i)_{i\in \N}$ being any sequence of times going to $0$:
\begin{equation*}
\begin{split}
&\frac{|\Ob(t)|^2_{g(t)}}{(1+\beta t\R_{g(t)})^{1/6}}(p)\leq \int_MK_D(p,t,y,t_i)\eta\frac{|\Ob(t_i)|^2_{g(t_i)}}{(1+\beta t\R_{g(t_i)})^{1/6}}\,d\mu_{g(t_i)}(y)\\&-2\int_{t_i }^t\int_MK_D(p,t,y,s)g(s)\left(\nabla^{g(s)}\eta,\nabla^{g(s)}\left(\frac{|\Ob(s)|^2_{g(s)}}{(1+\beta t\R_{g(s)})^{1/6}}\right)\right)\,d\mu_{g(s)}(y)ds,
\end{split}
\end{equation*}
where $K_D(x,t,y,s)$ denotes the heat kernel associated to the heat operator with potential $\partial_t-\Delta_{g(t)}-\R_{g(t)}$ with Dirichlet boundary condition on $\partial B_{d_0}(p,r)$.

Now, \cite[Proposition $4.1$]{Lee-Tam-loc-max-ppe} ensures the following short-time Gaussian bounds for such a heat kernel: 
\begin{equation*}
K_D(p,t,y,s)\leq \frac{C}{(t-s)^\frac{n}{2}}\exp\left(-\frac{d_{g(s)}^2(p,y)}{C(t-s)}\right), \quad 0\leq s\leq t<S,
\end{equation*}
for some uniform-in-time $C\in (0,\infty)$. Indeed, $C$ can be chosen so that it only depends on an upper bound of $\sup_Mt|\Rm(g(t))|_{g(t)}$ and a lower bound on $\inf_{t\in(0,T]}t^{-1/2}\inj_{g(t)}M$. The latter quantity can be in turn estimated by a lower bound on $\AVR(g(t))$ and an upper bound of $\sup_Mt|\Rm(g(t))|_{g(t)}$.

Applying  Corollary \ref{coro-pointwise-T} for $k-n/2=1$ and using that  $|\nabla^{g(s)}\eta|_{g(s)}$ is uniformly bounded, thanks to Lemma \ref{lemma-Cutoff-Perelman},  we see 
\begin{equation*}
\begin{split}
\frac{|\Ob(t)|^2_{g(t)}}{(1+\beta t\R_{g(t)})^{1/6}}(p)&\leq C't_i\int_MK_D(p,t,y,t_i)\,d\mu_{g(t_i)}(y)\\
&\quad+C'\int_{t_i}^t\int_{B_{d_0}(p,2r)\setminus\overline{B_{d_0}(p,r/2)}}K_D(p,t,y,s)\,d\mu_{g(s)}(y)ds\\
&\leq C't_i+C'\int_{0}^t\exp\left(-\frac{C^{-1}}{(t-s)}\right)\,ds,
\end{split}
\end{equation*}
for all $t >0$  sufficiently small, 
where $C'$ is a time-independent positive constant that may vary from line to line. Here we have invoked  that $B_{g(s)}(p,r)\subset B_{d_0}(p,r+C\sqrt{s})\subset B_{d_0}(p,2r)$ according to Proposition \ref{prop:RF_basics} and if $s\leq t\leq r^2C^{-2}$ in the penultimate line.

In the case that $|\Rm(t)|\leq K$ on $U$, we use the equation 
\eqref{replaceeq} in place of  \eqref{est-T-heat-subsol} in the proof above and obtain the same results.
\end{proof}

For the sake of clarity, we give a proof of Theorem \ref{main-I} by collecting the results obtained so far:
\begin{proof}[Proof of Theorem \ref{main-I}]
Consider a solution $u$ as constructed in Proposition 
\ref{coro-dist-comp-sol} (see \eqref{eq:def-limit-solution})  solving \eqref{eq:eqn-limit-solution}. We  now invoke the results of Corollary \ref{grad-low-bd-v} and Theorem \ref{theo-exp-dec}, in combination with 
 the definition of  $v,$ and $\Ob$.    This ends the proof of Theorem \ref{main-I}.
\end{proof} 

\section{Ricci-pinched almost expanding gradient Ricci solitons}\label{sec-ric-pin}
The main result of this section is the proof of Theorem \ref{main-II}. We start by proving Theorem \ref{main-III} that is a time-dependent version of Theorem \ref{main-II} but before doing so, we state and prove first the following rigidity result for solutions as in Theorem \ref{main-III} which start from a metric cone.
\begin{theo}\label{main-iii}
Let $(M^n,g(t))_{t\in(0,T)}$ be a smooth complete Ricci flow 
such that there exist $c\in (0,\infty)$, $D_0\in (0,\infty)$ and $D_1\in (0,\infty)$ such that on $M\times(0,T)$:
\begin{equation*}
\Ric(g(t))\geq c\R_{g(t)}g(t)\geq 0,\quad |\Rm(g(t))|_{g(t)}\leq \frac{D_0}{t},\quad|\Rm(g(t))|_{g(t)}\leq D_1\R_{g(t)}.
\end{equation*}
Assume that the pointed limit in the distance sense of $(M^n,d_{g(t)},o)$ as $t$ goes to $0$ is a metric cone $(C(X),d_0,o)$.

Then $(C(X),d_0,o)$ is a smooth flat cone, and $(M^n,g(t))_{t\in(0,T)}$ is isometric to Euclidean space $(\mathbb{R}^n,\eucl)$.
\end{theo}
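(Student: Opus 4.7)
The plan is to apply Theorem \ref{main-I} to the flow, use the Ricci pinching to convert the (exponentially fast) decay of the obstruction tensor into pointwise exponential decay of the scalar curvature $R$ on an annulus, exploit the scale-invariance of the initial cone to propagate this decay to all scales, and finally conclude that the initial data is Euclidean, whence so is the flow.

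First I would verify the hypotheses of Theorem \ref{main-I}: the curvature alternative $|\Rm|\leq D_1R$ is given, and the Reifenberg condition outside the tip has to be arranged separately in this setting (e.g.\ via a non-collapsing consequence of the pinching or by a short auxiliary argument). Theorem \ref{main-I} then yields a smooth potential $u$ on $B_{d_0}(o,4)\times(0,T)$ with $\partial_tu=\Delta_{g(t)}u-n/2$, with $|u-d_0^2/4|\leq C\sqrt t$ and $\big||\nabla u|_{g(t)}^2-u\big|\leq Ct$, and whose obstruction tensor $\mathcal T(t)=\nabla^{g(t),\,2}u-t\Ric(g(t))-g(t)/2$ and all its covariant derivatives decay faster than any polynomial on the annulus $A:=B_{d_0}(o,2)\setminus\overline{B_{d_0}(o,1)}$. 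Contracting identity \eqref{beautiful-covid19} of Lemma \ref{lemma-gen-form-exp-ric-pin} with $\nabla u$ gives the pointwise identity
\begin{equation*}
t\,\nabla u(R)+2\Ric(\nabla u,\nabla u)=O(e^{-C_0/t}) \qquad \text{on } A.
\end{equation*}
On $A$ the gradient bound forces $|\nabla u|_{g(t)}^2\geq u-Ct\geq c_0>0$ for $t$ small, so the pinching $\Ric\geq cRg$ produces $\Ric(\nabla u,\nabla u)\geq cc_0 R$. Along an integral curve of $\nabla u$ (parametrized by $s$) this reads
\begin{equation*}
\frac{dR}{ds}+\frac{2cc_0}{t}R\leq \frac{C}{t}e^{-C_0/t}.
\end{equation*}
Since $\nabla u$ is essentially radial on the cone, these integral curves cross $A$ in bounded parameter amount, so Gronwall combined with the a priori bound $R\leq D_0/t$ at the initial point of each curve yields $R(p,t)\leq e^{-c_1/t}$ throughout $A$ for $t$ small.

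The initial cone is scale-invariant, so the rescalings $g_\lambda(t):=\lambda^{-2}g(\lambda^2 t)$ satisfy the same hypotheses with the same constants and come out of the same cone. Applying the previous step to each $g_\lambda$ and translating back yields
\begin{equation*}
R(p,t)\leq \frac{4}{d_0(o,p)^2}\exp\!\Bigl(-\frac{c_1\,d_0(o,p)^2}{4\,t}\Bigr)
\end{equation*}
for every $p\neq o$ at all sufficiently small $t$ (depending on $d_0(o,p)$). Hence at each fixed $p\neq o$, $R(p,t)\to 0$ as $t\searrow 0$, and $|\Rm(g(t))|\leq D_1 R\to 0$ uniformly on compact subsets of $M\setminus\{o\}$. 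Shi's interior estimates then extend $g(t)$ smoothly on compacts of $M\setminus\{o\}$ to a flat limit metric which must coincide with $d_0$, so $(C(X),d_0)$ is smooth and flat away from $o$. Since $M$ is a smooth manifold homeomorphic to $C(X)$, this forces $(C(X),d_0)=(\mathbb{R}^n,\eucl)$, and uniqueness of Ricci flows satisfying $|\Rm|\leq D_0/t$ starting from Euclidean space (e.g.\ by Chen--Zhu or Kotschwar unique continuation) then implies $(M^n,g(t))_{t\in(0,T)}$ is isometric to Euclidean space, as claimed.

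The main obstacle is to make the Gronwall/scaling step truly quantitative with constants independent of $\lambda$ (requiring the constants of Theorem \ref{main-I} to be scale-invariant, as they are by construction), and separately to justify the Reifenberg hypothesis of Theorem \ref{main-I} in this setting where non-collapsing is not assumed a priori; the Ricci pinching is the crucial ingredient trading the soft decay of the obstruction tensor for the hard pointwise exponential decay of $R$.
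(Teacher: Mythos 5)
Your proposal is correct and follows the same overall strategy as the paper — invoke the exponential/fast decay of the obstruction tensor on the annulus $B_{d_0}(o,2)\setminus\overline{B_{d_0}(o,1)}$, contract the identity \eqref{beautiful-covid19} of Lemma \ref{lemma-gen-form-exp-ric-pin} with $\nabla u$, combine the pinching $\Ric\geq cRg$ with the lower gradient bound $|\nabla u|^2\geq u-Ct\geq c_0>0$ from Corollary \ref{grad-low-bd-v} to force $R\to 0$ as $t\to 0$, and then conclude that the initial cone is smooth and flat on the annulus (hence everywhere by scale invariance), whence Euclidean by the topological argument — but the technical core step differs in an interesting way. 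You integrate the contracted identity along integral curves of $\nabla u$ and run a Gronwall argument, producing genuine exponential decay of $R$, whereas the paper runs a self-improving bootstrap: it bounds $R|\nabla u|^2$ by $C_kt^k + |\nabla(tR)|$, plugs in Shi's interior estimates to estimate $|\nabla(tR)|$, reads off an improved bound on $|\Rm|\leq D_1 R$, feeds this back into Shi, and gains half a power of $t$ per iteration until reaching $|\Rm|\leq C_kt^k$ for any prescribed $k$. The paper's route needs only the faster-than-polynomial decay of Corollary \ref{coro-pointwise-T} (as its remark points out), not the exponential decay of Theorem \ref{main-I}(3), and avoids having to justify that the integral curves of $\nabla u$ cross the annulus — your argument can close this by noting $\partial_s u=|\nabla u|\geq \sqrt{c_0}>0$ so the curves exit in bounded arc-length, but this is a step you must make explicit. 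You are also more explicit than the paper about propagating the decay to all scales by applying the estimate to the parabolic rescalings $g_\lambda$, a point the paper leaves implicit via scale invariance of the cone. Finally, you correctly flag that the Reifenberg hypothesis of Theorem \ref{main-I} does not appear in the statement of Theorem \ref{main-iii}; the paper's own proof invokes Corollary \ref{coro-pointwise-T}, which sits downstream of that hypothesis, so this is a shared gap — in the application (Theorem \ref{main-III}) the blow-down flow carries $\AVR>0$, which supplies the non-collapsing and Cheeger--Colding regularity theory that makes the construction work. Your appeal to uniqueness of Ricci flows to conclude is a fine alternative to the paper's volume-rigidity argument at the end.
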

\begin{rk}
The proof of Theorem \ref{main-iii} only relies on a faster than polynomial decay for the tensor $\Ob$ established in Corollary \ref{coro-pointwise-T}.
\end{rk}
\begin{proof}
By interior estimates on $\Ob(t)$ from Corollary \ref{coro-pointwise-T}, we know that on $B_{d_0}(o,2)\setminus \overline{B_{d_0}(o,1)}$, $|\nabla^{g(t)}\Ob(t)|_{g(t)}\leq C_kt^k$ for all $k\geq 0$. In particular, Lemma \ref{lemma-gen-form-exp-ric-pin} ensures that:
\begin{equation*}
g(t)(\nabla^{g(t)}t\R_{g(t)}, \nabla^{g(t)}u(t))+2\Ric(g(t))(\nabla^{g(t)}u,\nabla^{g(t)}u)\leq C_kt^k,
\end{equation*}
since the gradient of $u$ is uniformly bounded by Proposition \ref{prop-grad-bd-a-priori}. Since the metric is Ricci-pinched, 
\begin{equation}\label{first-est-improv-R}
\R_{g(t)}|\nabla^{g(t)}u|^2_{g(t)}\leq C_kt^k+c|\nabla^{g(t)}(t\R_{g(t)})|_{g(t)}\leq C_kt^k+ct^{-1/2},
\end{equation}
according to Shi's interior estimates. The lower gradient bound from Corollary \ref{grad-low-bd-v} shows that once we take a limit of the functions $u^i$, one gets a uniform lower bound on the gradient of $u$ outside the apex of the cone.

In particular, \eqref{first-est-improv-R} gives an improvement on the blow-up of the curvature at the initial time: $|\Rm(g(t))|_{g(t)}\leq c(n)\R_{g(t)}\leq Ct^{-1/2}.$ Inserting this bound back to the same reasoning that led to \eqref{first-est-improv-R} gives, thanks to Shi's interior estimates (see for instance Theorem 13.1 in 
\cite{HamFor}) $|\nabla^{g(t)}(t\R_{g(t)})|_{g(t)}\leq C$, $|\Rm(g(t))|_{g(t)}\leq c(n)\R_{g(t)}\leq C.$ Iterating this reasoning a finite  number of times leads to $|\Rm(g(t))|_{g(t)}\leq C_kt^k$ for $k\in \N$ given, which is sufficient to conclude that not only 
 is the cone at time zero smooth,  but also that it is flat. Remembering that the cone has the same topology as $(M,g(t))$ for each $t\in(0,T)$ that is, it is a manifold, we conclude that the cone and $(M,g(t))$ are isometric to Euclidean space.
\end{proof}

Let us now give a proof of Theorem \ref{main-III}:
\begin{proof}[Proof of Theorem \ref{main-III}]
In order to invoke Theorem \ref{main-iii}, it suffices to blow-down the solution $(g(t))_{t>0}$ as follows. Let $(\lambda_i)_i$ be a sequence of positive numbers converging to $0$ and let $g_{i}(t):=\lambda_ig(t/\lambda_i)$ be the corresponding parabolic rescaling of the solutions of $(g(t))_{t}$ defined for $t>0$. Observe that:
\begin{equation*}
\Ric(g_i(t))\geq c\R_{g_i(t)}g_i(t)\geq 0,\quad |\Rm(g_i(t))|_{g_i(t)}\leq \frac{D_0}{t}, \quad |\Rm(g_i(t))|_{g_i(t)}\leq D_1\R_{g_i(t)}.
\end{equation*}
for uniform positive constants $c$, $D_0$ and $D_1$.

On the one hand, at $t=0$, the pointed sequence $(M,g_i(0),p)_i$ Gromov-Hausdorff converges to an asymptotic cone at infinity of $(M,g)$. Since it is assumed to be non-collapsed at all scales, i.e. $\AVR(g_i(0))=\AVR(g(0))>0$, \cite{CheegerColding_Rigidity} ensures that $\limGH_{\lambda_i\rightarrow 0}(M,g_i(0),p)=(C(X),d_0,o)$ where $d_0$ is a metric cone distance on the cone $C( X)$. On the other hand, $\AVR(g_i(t))=\AVR(g(0))>0$ for every $t>0$ and indices $i$ according to Proposition \ref{prop:RF_basics} and since $|\Rm(g_i(t))|_{g_i(t)}\leq D_0/t$ due to the scaling properties of the curvature tensor, Hamilton's compactness theorem \cite{Ham-Com} guarantees the existence of a subsequence still denoted by $(g_i(t))_i$ converging to a solution $(g_0(t))_{t>0}$ to the Ricci flow in the Cheeger-Gromov convergence. In particular, $(g_0(t))_{t>0}$ satisfies for $t>0$,
\begin{equation*}
\Ric(g_0(t))\geq c\R_{g_0(t)}g_0(t)\geq 0,\quad |\Rm(g_0(t))|_{g_0(t)}\leq \frac{D_0}{t},\quad |\Rm(g_0(t))|_{g_0(t)}\leq D_1\R_{g_0(t)}.
\end{equation*}
Moreover, Proposition \ref{prop:RF_basics} gives that $(M,d_{g_0(t)},p)$ converges to $(C(X),d_0,o)$ in the distance sense as $t$ goes to $0$. We are then in a position to apply Theorem \ref{main-iii} to conclude that $(C(X),d_0,o)$ is a smooth flat cone. In particular, $\mathcal{H}^n(B_{d_0}(o,1))=\omega_n$ where $\omega_n$ denotes the volume of the unit ball in $\mathbb{R}^n$. But since $\AVR(g(0))=\mathcal{H}^n(B_{d_0}(o,1))$ according to Cheeger-Colding's volume continuity \cite{Cheeger_notes}, one gets $\AVR(g(0))=\omega_n$ which implies by the rigidity result of Bishop-Gromov that $(M,g(0))$ is isometric to Euclidean space. Since $\AVR(g(t))=\AVR(g(0))$ for $t>0$ by Proposition \ref{prop:RF_basics} again, it also implies that the whole solution is isometric to Euclidean space. 
\end{proof}

Finally, we prove Theorem \ref{main-II}:
\begin{proof}[Proof of Theorem \ref{main-II}]
Let $(M^n,g)$ be a complete Riemannian manifold such that it is PIC1 pinched. Then by the results of \cite{Lee-Top-PIC2}, there exists a complete solution to Ricci flow $(M^n,g(t))_{t>0}$ such that it is uniformly PIC1 pinched with curvature operator $\Rm(g(t))$ bounded by $t^{-1}$. In particular, it is uniformly Ricci-pinched. We summarize the properties of this flow as follows:
\begin{equation*}
\Ric(g(t))\geq c\R_{g(t)}g(t)\geq 0,\quad |\Rm(g(t))|_{g(t)}\leq \frac{D_0}{t}, \quad |\Rm(g(t))|_{g_0(t)}\leq D_1\R_{g(t)}.
\end{equation*}
for some time-independent positive constants $c$, $D_0$ and $D_1$. The third property follows from Section \ref{sec-curv-constraints}. Moreover, as we assume $\AVR(g)=\AVR(g(0))>0$ then $\AVR(g(t))=\AVR(g)>0$ for $t>0$ by Proposition \ref{prop:RF_basics}. Then Theorem \ref{main-III} allows us to conclude that $(M,g(t))_{t>0}$ and hence $(M,g)$ is isometric to Euclidean space.
\end{proof}
\newpage
\begin{appendix}
\section{Perelman cut-off functions}
The following lemma is essentially taken from \cite[Lemma $7.1$]{Sim-Top-3d}:
\begin{lemma}\label{lemma-Cutoff-Perelman}
Let $c_0>0$, $0<r_1<r_2$ and $n\in\mathbb{N}$ arbitrary. Suppose that $(M^n,g(t))$ is a smooth Ricci flow for $t\in[0,T)$, and $x_0\in M$ satisfies $B_{g(t)}(x_0,r_2)\subset \subset M$ for all $t\in[0,T)$. We assume further that
\begin{equation}\label{cond-Cutoff-Perelman}
\Ric(g(t))\leq \frac{c_0(n-1)}{t}g(t), \quad\text{on $B_{g(t)}(x_0,\sqrt{t})$,} 
\end{equation}
for all $t\in(0,T)$. Then there exist positive constants $\hat{T}=\hat{T}(c_0,r_1,r_2,n)>0$, $k=k(r_1,r_2)>0$, $V=V(r_1,r_2)>0$ and a locally Lipschitz continuous function $\eta:M\times[0,\hat{T})\rightarrow \mathbb{R}$ such that
\begin{enumerate}
\item $\eta(\cdot,t)=e^{-kt}$ on $B_{g(t)}(x_0,r_1)$ and $\eta(\cdot,t)=0$ outside $B_{g(t)}(x_0,r_2)$ for all $t\in[0,\hat{T})\cap [0,T)$ and $\eta(x,t)\in[0,e^{-kt}]$ for all $(x,t)\in M\times [0,\hat{T})\cap [0,T)$,\\
\item $|\nabla^{g(t)}\eta|^2_{g(t)}\leq V\eta$, $\partial_t\eta\geq -V/\sqrt{t}$ a.e. and $\eta$ is a subsolution to the heat equation in the following barrier sense. For any $(x,t)\in M\times (0,\hat{T})\cap (0,T)$, we can find a neighborhood $\mathcal{O}$ of $(x,t)$ and a smooth function $\hat{\eta}:\mathcal{O}\rightarrow\mathbb{R}_+$ such that $\hat{\eta}\leq\eta$ on $\mathcal{O}$, $\hat{\eta}(x,t)=\eta(x,t)$ and $\partial_t\hat{\eta}(x,t)\leq \Delta_{g(t)}\hat{\eta}(x,t)$ and $|\nabla^{g(t)}\hat{\eta}|^2_{g(t)}(x,t)\leq V\hat{\eta}(x,t)$.\\

\item The function $\eta$ satisfies $\partial_t\eta\leq \Delta_{g(t)}\eta$ in the weak sense: see \cite[Section $9.4$]{Chow-Lu-Ni-I} for instance for a proof.
\end{enumerate}
\end{lemma}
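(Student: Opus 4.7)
The plan is to build $\eta$ in the classical Perelman/Hamilton form
\[
\eta(x,t) := e^{-kt}\,\phi\bigl(d_{g(t)}(x_0,x) + A\sqrt{t}\bigr)
\]
where $\phi:\mathbb R\to[0,1]$ is a smooth non-increasing cut-off, $A=A(n,c_0)$ is a Perelman distance constant, and $k=k(r_1,r_2)$ is chosen to absorb the remaining error. The role of the shift $A\sqrt{t}$ is to upgrade the distance function into a function that is a super-solution (in the barrier sense) of the backward heat equation along the flow; the role of the cut-off $\phi$ is to confine $\eta$ to the prescribed annulus while staying a sub-solution; the role of $e^{-kt}$ is to absorb the term coming from the convexity defect of $\phi$.

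The key analytic ingredient I would invoke is Perelman's differential inequality for the distance function: under the hypothesis $\Ric(g(t))\le c_0(n-1)/t\,g(t)$ on $B_{g(t)}(x_0,\sqrt t)$, at every point $(x,t_0)$ with $d_{g(t_0)}(x_0,x)\ge\sqrt{t_0}$ one has a smooth upper barrier $\tilde d$ of $d_{g(\cdot)}(x_0,\cdot)$ near $(x,t_0)$ satisfying $(\partial_t-\Delta_{g(t)})\tilde d \ge -A_0(n,c_0)/\sqrt{t_0}$, obtained by splitting a minimizing geodesic into the portion inside $B_{g(t_0)}(x_0,\sqrt{t_0})$ (where the Ricci upper bound is used together with the second variation formula) and the portion outside (where the Hessian comparison for the distance is used). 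Setting $A:=2A_0$ and $L(x,t):=d_{g(t)}(x_0,x)+A\sqrt t$, the same barrier argument gives $(\partial_t-\Delta_{g(t)})\tilde L\ge 0$ wherever $d(x,t)\ge\sqrt t$.

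Next, I would choose $r_1<r_1'<r_2'\le r_2$ and a smooth non-increasing $\phi$ with $\phi\equiv1$ on $(-\infty,r_1']$, $\phi\equiv0$ on $[r_2',\infty)$, satisfying both $|\phi'|^2\le V_1\phi$ and $-\phi''\le V_1\phi$ for some $V_1=V_1(r_1',r_2')$; such a $\phi$ can be constructed by composing a standard bump that vanishes to high order with a linear rescaling of $[r_1',r_2']$. Restrict $t\le\hat T$ small enough (depending on $r_1,r_2,A$) so that on $\{d(x,t)<\sqrt t\}$ one has $L(x,t)<r_1'$ (hence $\phi(L)\equiv 1$ there, trivializing the problematic inner region), and so that $\{L\le r_1'\}\supset B_{g(t)}(x_0,r_1)$ and $\{L\ge r_2'\}\supset M\setminus B_{g(t)}(x_0,r_2)$. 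For any $(x,t_0)$ in the transition annulus, take the smooth upper barrier $\hat\eta(x,s):=e^{-ks}\phi(\tilde L(x,s))$; since $\phi$ is non-increasing, $\hat\eta\le\eta$ with equality at $(x,t_0)$. Then
\[
(\partial_t-\Delta_{g(t)})\hat\eta = e^{-ks}\bigl[-k\phi+\phi'(\tilde L)(\partial_t-\Delta)\tilde L-\phi''(\tilde L)|\nabla\tilde L|^2\bigr]
\le e^{-ks}(-k+V_1)\phi,
\]
using $\phi'\le0$, $(\partial_t-\Delta)\tilde L\ge0$, $|\nabla\tilde L|\le 1$, and $-\phi''\le V_1\phi$. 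Choosing $k:=V_1$ gives $(\partial_t-\Delta)\hat\eta\le 0$. The gradient bound $|\nabla\hat\eta|^2\le V_1\hat\eta$ is immediate from $|\phi'|^2\le V_1\phi$ and $|\nabla\tilde L|\le 1$. This barrier formulation is exactly the statement of item~(2); item~(3) then follows from (2) by a standard approximation argument recalled in \cite{Chow-Lu-Ni-I}.

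The main obstacle I expect is a bookkeeping one: namely, producing a single cut-off $\phi$ satisfying both $|\phi'|^2\le V_1\phi$ and $-\phi''\le V_1\phi$ with a constant $V_1$ that depends only on $r_1,r_2$, and then pinning down $\hat T$ so that (i)~the Perelman estimate applies in the entire region where $\phi$ varies (requiring $r_1'-A\sqrt{t}\ge\sqrt{t}$), and (ii)~the annuli $B_{g(t)}(x_0,r_1)$ and $M\setminus B_{g(t)}(x_0,r_2)$ fall in the right sides of the level sets of $L$. The bound $\partial_t\eta\ge-V/\sqrt{t}$ a.e.\ is then a direct consequence of $|\phi'|\le\sqrt{V_1}$ combined with the a.e.\ lower bound $\partial_t d_{g(t)}(x_0,\cdot)\ge -A_0/\sqrt t$ coming from Perelman's argument (and the trivial bound inside the inner region where $\phi$ is locally constant).
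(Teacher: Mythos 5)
Your construction is the standard Perelman cut-off argument, which is what the cited source \cite[Lemma 7.1]{Sim-Top-3d} uses (the paper offers no independent proof, only this citation), so the overall approach matches. The parts of your proposal addressing items (1) and (2)---the ansatz $\eta=e^{-kt}\phi\bigl(d_{g(t)}(x_0,\cdot)+A\sqrt t\bigr)$, the use of Perelman's barrier inequality $(\partial_t-\Delta)\tilde d\ge -A_0/\sqrt{t_0}$ for $d\ge\sqrt{t_0}$, the choice of $\hat T$ so that $\phi$ is locally constant where this barrier estimate is unavailable, the barrier computation giving $(\partial_t-\Delta)\hat\eta\le e^{-ks}(-k+V_1)\phi$, and the gradient bound---are all sound.

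There is, however, a genuine gap in your justification of the a.e.\ bound $\partial_t\eta\ge -V/\sqrt t$. You assert it follows from ``the a.e.\ lower bound $\partial_t d_{g(t)}(x_0,\cdot)\ge -A_0/\sqrt t$ coming from Perelman's argument,'' but this is wrong on two counts. First, Perelman's Lemma~8.3(a) controls the combination $\partial_t d-\Delta d$, not $\partial_t d$ alone: the Laplacian term is essential to cancel the contribution of the uncontrolled Ricci curvature outside $B_{g(t)}(x_0,\sqrt t)$. A bound on $\partial_t d$ by itself (Perelman's Lemma 8.3(b)) would need the Ricci upper bound near \emph{both} endpoints of the geodesic, which is not among the hypotheses here. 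Second, and more fundamentally, the sign is reversed: from
\[
\partial_t\eta=-k\eta+e^{-kt}\,\phi'(L)\,\partial_t L,\qquad \partial_t L=\partial_t d+\tfrac{A}{2\sqrt t},\qquad \phi'\le 0,
\]
a lower bound on $\partial_t\eta$ requires an \emph{upper} bound on $\partial_t L$, hence on $\partial_t d=-\int_\gamma\Ric(\gamma',\gamma')\,ds$, which in turn requires a \emph{lower} bound on Ricci along the geodesic. The hypotheses of the lemma give only a Ricci upper bound near $x_0$ and nothing else, so this step does not follow from what you cite. In the paper's applications the flow always satisfies $\Ric(g(t))\ge 0$, which gives $\partial_t d\le 0$ and makes the a.e.\ bound immediate; but that input is not recorded in the lemma's hypotheses, and in any case you invoked the opposite inequality. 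To repair the step you should either appeal to a lower Ricci bound (noting it is implicit in the intended use), or give an argument that genuinely yields $\partial_t d\le C/\sqrt t$ in the region where $\phi'(L)\neq 0$.
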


\begin{rk}
We will use Lemma \ref{lemma-Cutoff-Perelman} in the case where $r_1$ and $r_2$ are universally proportional to a given scale, say $r_1= \alpha  r, r_2 =r>0$. As pointed out in \cite[Lemma $7.1$]{Sim-Top-3d},  $\hat{T}$ can be chosen  to be   $ \hat{T} = B(n)  (1-\al)^2 \geq \frac{r^2}{c_0^2},$  and $k=   \frac{B(n)}{(1-\al)^2 r^2}.$
\end{rk}
\section{Curvature constraints}\label{sec-curv-constraints}
This short section is devoted to the proof of a result that is essentially contained in \cite{Mic-Wan}: see also \cite[Lemma A.1]{Lee-Top-PIC2} for PIC1 metrics.

Recall that a Riemannian metric $g$ is said to have non-negative isotropic curvature if for any orthonormal four-frames $(e_i)_{1\leq i\leq 4}$,
\begin{equation*}
\Rm(g)_{1331}+\Rm(g)_{1441}+\Rm(g)_{2332}+\Rm(g)_{2442}+2\Rm(g)_{1234}\geq 0,
\end{equation*}
where $\Rm(g)_{ijkl}:=g\left(\Rm(g)(e_i,e_j)e_k,e_l\right).$
In particular, by switching the roles of $e_3$ and $e_4$, one gets the following condition:
\begin{equation*}
\Rm(g)_{1331}+\Rm(g)_{1441}+\Rm(g)_{2332}+\Rm(g)_{2442}\geq 0.
\end{equation*}
By renaming indices, one ends up with the following consequence:
\begin{equation*}
\Rm(g)_{ikki}+\Rm(g)_{illi}+\Rm(g)_{jkkj}+\Rm(g)_{jllj}\geq 0.
\end{equation*}
By tracing over the index $l$ different from $i$, $j$, $k$:
\begin{equation*}
\begin{split}
(n-4)(\Rm(g)(e_i,e_k,e_k,e_i)&+\Rm(g)(e_j,e_k,e_k,e_j))\\
&-2\Rm(g)(e_i,e_j,e_j,e_i)+\Ric(g)(e_i,e_i)+\Ric(g)(e_j,e_j)\geq 0.
\end{split}
\end{equation*}
Tracing once more over the index $k$ different from $i$, $j$:
\begin{equation*}
(2n-6)\left(\Ric(g)(e_i,e_i)+\Ric(g)(e_j,e_j)-2\Rm(g)(e_i,e_j,e_j,e_i)\right)\geq 0.
\end{equation*}
In particular, if $\Ric(g)\geq 0$ then $\Ric(g)\leq \R_gg$ which implies that for all orthonormal two-frames $\{e_1,e_2\}$:
\begin{equation}
\R_g\geq \Rm(g)(e_i,e_j,e_j,e_i).\label{upp-bd-sec-NIC}
\end{equation}
Finally, for two distinct indices $i$ and $j$:
\begin{equation*}
\begin{split}
\R_g&=\sum_{k=1}^n\Ric(g)(e_k,e_k)=\sum_{k\neq i,j}\Ric(g)(e_k,e_k)+\Ric(g)(e_i,e_i)+\Ric(g)(e_j,e_j)\\
&\leq (n-2)\R_g+\Ric(g)(e_i,e_i)+\Ric(g)(e_j,e_j)\\
&\leq (n-2)\R_g+2\Rm(g)(e_i,e_j,e_j,e_i)+\sum_{k\neq i,j}\Rm(g)(e_i,e_k,e_k,e_i)+\sum_{k\neq i,j}\Rm(g)(e_j,e_k,e_k,e_j)\\
&\leq 3(n-2)\R_g+2\Rm(g)(e_i,e_j,e_j,e_i),
\end{split}
\end{equation*}
where we have used the upper bound \eqref{upp-bd-sec-NIC}.
Let us summarize this discussion in the following proposition essentially due to Micaleff and Wang \cite{Mic-Wan}:
\begin{prop}\label{prop-bds-curv}
Let $(M^n,g)$, $n\geq 4$, be a Riemannian manifold with non-negative isotropic curvature and non-negative Ricci curvature. Then for all orthonormal two-frames $\{e_1,e_2\}$:
\begin{equation}
-\frac{(3n-7)}{2}\R_g\leq\Rm(g)(e_i,e_j,e_j,e_i)-\R_g\leq 0.\label{est-upp-inf-bd-sec}
\end{equation}
In particular, if $\R_g=0$ then the metric $g$ is flat.
\end{prop}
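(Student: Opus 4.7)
\textbf{Proof plan for Proposition \ref{prop-bds-curv}.} The proof is already essentially sketched in the discussion preceding the statement; my plan is to organize that discussion into a clean argument and then extract the rigidity consequence. I would begin from the definition of non-negative isotropic curvature applied to an arbitrary orthonormal four-frame $\{e_1,e_2,e_3,e_4\}$, and average the resulting inequality with the one obtained by swapping $e_3 \leftrightarrow e_4$; since the $\Rm_{1234}$ term changes sign under this swap, the average kills it and yields
\begin{equation*}
\Rm(g)_{ikki}+\Rm(g)_{illi}+\Rm(g)_{jkkj}+\Rm(g)_{jllj}\geq 0
\end{equation*}
for any four mutually distinct indices $i,j,k,l$ (after relabelling). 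This is the basic inequality to work with.

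Next I would exploit the hypothesis $n\geq 4$ to perform two traces. Tracing over one free index $l$ distinct from $i,j,k$ produces an inequality involving two sectional curvatures, one mixed term and two Ricci components; tracing once more over $k$ collapses this to
\begin{equation*}
\Ric(g)(e_i,e_i)+\Ric(g)(e_j,e_j)\geq 2\,\Rm(g)(e_i,e_j,e_j,e_i).
\end{equation*}
Combined with $\Ric(g)\geq 0$, one gets $\Ric(g)\leq \R_g\, g$ and hence the upper bound $\Rm(g)(e_i,e_j,e_j,e_i)\leq \R_g$, which is the right-hand side of \eqref{est-upp-inf-bd-sec}.

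For the lower bound I would decompose the scalar curvature as $\R_g=\sum_k\Ric(g)(e_k,e_k)$, split off the $k=i$ and $k=j$ terms, bound the remaining Ricci components by $(n-2)\R_g$ using the upper bound just obtained, and rewrite $\Ric(g)(e_i,e_i)$ and $\Ric(g)(e_j,e_j)$ as a sum of sectional curvatures. Using the already established upper bound $\Rm(g)(e_i,e_k,e_k,e_i)\leq \R_g$ on all these sectional curvatures except $\Rm(g)(e_i,e_j,e_j,e_i)$, one arrives at
\begin{equation*}
\R_g\leq 3(n-2)\R_g+2\,\Rm(g)(e_i,e_j,e_j,e_i),
\end{equation*}
which rearranges precisely to the left-hand side of \eqref{est-upp-inf-bd-sec}.

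Finally, the rigidity statement is immediate: if $\R_g\equiv 0$, then \eqref{est-upp-inf-bd-sec} forces $\Rm(g)(e_i,e_j,e_j,e_i)=0$ for every orthonormal two-frame, so all sectional curvatures vanish and $\Rm(g)\equiv 0$, i.e.~$g$ is flat. There is no genuine obstacle here, as all ingredients are purely algebraic manipulations of the isotropic curvature condition; the only care needed is the bookkeeping in the two traces, which requires $n\geq 4$ so that the indices $i,j,k,l$ can indeed be chosen distinct.
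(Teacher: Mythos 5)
Your plan reproduces the paper's own argument essentially verbatim: average the isotropic-curvature inequality over the swap $e_3\leftrightarrow e_4$ to kill the $\Rm_{1234}$ term, trace twice (over $l$, then over $k$) to obtain $\Ric(e_i,e_i)+\Ric(e_j,e_j)\geq 2\,\Rm(e_i,e_j,e_j,e_i)$, combine with $0\leq\Ric\leq\R_g\,g$ for the upper bound, and then decompose $\R_g$ as a sum of Ricci terms and sectional curvatures for the lower bound; the rigidity statement follows immediately. One small point worth flagging (it affects the paper's statement as much as yours): the chain $\R_g\leq 3(n-2)\R_g+2\,\Rm(e_i,e_j,e_j,e_i)$ rearranges to $\Rm(e_i,e_j,e_j,e_i)\geq-\tfrac{3n-7}{2}\R_g$, not to $\Rm(e_i,e_j,e_j,e_i)-\R_g\geq-\tfrac{3n-7}{2}\R_g$ as displayed in \eqref{est-upp-inf-bd-sec}, so "rearranges precisely" is an overstatement by a harmless $\R_g$; this does not affect the use made of the proposition (a bound of the form $|\Rm|\leq C(n)\R_g$) nor the rigidity conclusion.
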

As a corollary, Proposition \ref{prop-bds-curv} holds true if $(M^n,g)$ has weak PIC1 which is implied if $(M^n,g)$ has $2$-non-negative curvature operator.

\section{Sharp Poisson regularization of $d_0^2$} \label{appendix-Poisson}

If $\varepsilon>0$, recall that a metric ball $B_d(x,r)\subset X$ is $(k,\varepsilon^2)$-symmetric if there exists a $k$-symmetric metric cone $X':=\mathbb{R}^k\times C(Z)$ with $x'$ a vertex of $X'$ such that $$d_{GH}(B_{d}(x,r),B_{d'}(x',r))<\varepsilon r,$$
where $d'$ is the product metric on $X'$. The following statement employs the local entropy $\mathcal{W}_{\varepsilon}^{\delta}$ as introduced in 
\cite[Definition 4.19]{Che-Jia-Nab}, compare the earlier work of Cheeger-Colding (see \cite{Cheeger_notes}). 

\begin{theo}\cite[Theorem 6.3]{Che-Jia-Nab} \label{che-jia-nab-maps}
For  $(M^n,g,p),$ and $\varepsilon,v >0$ , there exists a $\de_1=\de_1(n,v,\varepsilon)$ such that the following holds for all $\de \leq \de_1$.  
Assume  $\Ric(g)\geq -(n-1)\delta^2$ with $\vol_gB_g(p,\delta^{-1})>v\delta^{-n}>0.$ If    $B_g(x,r)\Subset B_g(p,5)$   and $B_g(x,r\delta^{-1})$ is $(0,\delta^2)$-symmetric, then there exists a function $u:B_g(x,2r)\rightarrow \mathbb{R}$ such that:\\[-1ex]
\begin{enumerate}
\item (Poisson equation) $\Delta_gu=\frac{n}{2}.$\\[-1ex]

\item (Hessian bounds on $u$)
\begin{equation*}
\begin{split}
\dashint_{B_g(x,2r)}\bigg(\left|\nabla^{g,2}u-\frac{g}{2}\right|^2_g+\Ric(g)(\nabla^gu,\nabla^gu)&+2(n-1)\delta^2|\nabla^gu|^2_g\bigg)\,d\mu_g\\
&\leq C(n,v)\left|\mathcal{W}^{\delta}_{r^2}(x)-\mathcal{W}^{\delta}_{2r^2}(x)\right|.
\end{split}
\end{equation*}
\item (Sharp $L^2$ gradient bound) 
\begin{equation*}
\dashint_{B_g(x,2r)}\left||\nabla^gu|^2_g-u\right|^2\,d\mu_g\leq C(n,v)r^4\left|\mathcal{W}^{\delta}_{r^2}(x)-\mathcal{W}^{\delta}_{2r^2}(x)\right|.
\end{equation*}
\item ($L^{\infty}$ upper gradient bound) $|\nabla^gu|_g\leq C(n,v)r.$\\[-1ex]

\item ($L^{\infty}$ closeness)
\begin{equation*}
 \sup_{B_g(x,2r)}\left|u-\frac{d_g(x,\cdot)^2}{4}\right|\leq \varepsilon r^2.
\end{equation*}
\end{enumerate}
\end{theo}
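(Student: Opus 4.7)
The plan is to construct $u$ as the Dirichlet solution of the Poisson equation $\Delta_g u = n/2$ on a slightly enlarged ball with boundary data modeled on $d_g(x,\cdot)^2/4$, and to extract all five conclusions from the single Bochner identity
$$\Delta_g\!\left(|\nabla^g u|_g^2 - u\right) = 2\left|\nabla^{g,2} u - \tfrac{g}{2}\right|_g^2 + 2\Ric(g)(\nabla^g u, \nabla^g u),$$
which follows by combining the standard Bochner formula with the algebraic identity $|\nabla^{g,2} u|_g^2 = |\nabla^{g,2} u - g/2|_g^2 + n/4$, a consequence of $\tr_g \nabla^{g,2} u = \Delta_g u = n/2$. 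On the model cone $\mathbb{R}^n$, the function $d^2/4$ solves the Poisson equation exactly and satisfies $|\nabla(d^2/4)|^2 = d^2/4$, so both target quantities $\nabla^{g,2} u - g/2$ and $|\nabla^g u|^2 - u$ vanish identically, providing the correct model to compare against.

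For the construction I would take $u$ to be the solution of $\Delta_g u = n/2$ on, say, $B_g(x, 3r)$ with boundary value a smooth mollification (at scale $\delta r$) of $d_g(x, \cdot)^2/4$. The $L^\infty$ closeness statement (5) on the smaller ball $B_g(x,2r)$ would then follow from the Laplacian comparison $\Delta_g(d_g(x,\cdot)^2/4) \leq n/2 + O(\delta)$ under the Ricci lower bound, combined with the $(0, \delta^2)$-symmetry of the much larger ball $B_g(x, r\delta^{-1})$, via a maximum principle argument applied to the difference $u - d_g(x,\cdot)^2/4$. The $L^\infty$ upper gradient bound (4) would come from a local Bernstein-Moser argument on the equation satisfied by $|\nabla^g u|_g^2$ (namely $\Delta_g |\nabla^g u|^2 = 2|\nabla^{g,2} u|^2 + 2\Ric(g)(\nabla^g u, \nabla^g u)$), using the $L^\infty$ bound on $u$ from (5) as an input.

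The Hessian estimate (2) is then obtained by multiplying the Bochner identity by a cut-off supported in a slightly larger ball and integrating by parts, absorbing the $-(n-1)\delta^2$ Ricci lower bound into the term $2(n-1)\delta^2|\nabla^g u|^2$. The boundary integrals produced by the cut-off are controlled, via Cauchy-Schwarz and the non-collapsing assumption, by the $L^2$ oscillation of $|\nabla^g u|^2 - u$ on concentric annuli -- and this is exactly the elliptic analogue of what the local entropy $\mathcal{W}^\delta$ of Cheeger-Jiang-Naber measures, with its Perelman-type monotonicity in the scale parameter providing the bound by $|\mathcal{W}^\delta_{r^2}(x) - \mathcal{W}^\delta_{2r^2}(x)|$. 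The sharp $L^2$ gradient estimate (3) would then be obtained by multiplying the same Bochner identity by $|\nabla^g u|^2 - u$ and integrating by parts a second time, converting the pointwise non-negativity of the right-hand side into an $L^2$ control with the correct $r^4$ scaling.

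The main obstacle I anticipate is the precise identification of the boundary terms produced by the cut-off integration with the entropy gap. This requires translating the elliptic quantity $|\nabla^g u|^2 - u$ into the heat-kernel quantities appearing in the definition of $\mathcal{W}^\delta$ and invoking the sharp monotonicity of the Perelman-type $W$-entropy under a Ricci lower bound; in effect, one must show that the elliptic Poisson regularization $u$ and the parabolic heat regularization used to define $\mathcal{W}^\delta$ record the same conic defect up to constants depending only on $n$ and $v$. This coupling between elliptic and parabolic cone-detection is the technical heart of the Cheeger-Jiang-Naber result and is what forces the constants $C(n,v)$ to be independent of the entropy gap itself.
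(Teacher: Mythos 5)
The theorem is quoted verbatim in the paper's appendix as \cite[Theorem 6.3]{Che-Jia-Nab}; the paper supplies no proof, so there is no in-house argument for me to compare your sketch against, and I evaluate it on its own merits. You correctly identify the Cheeger--Colding / Cheeger--Jiang--Naber machinery: Poisson regularization of $d^2/4$, the Bochner identity $\Delta_g(|\nabla^g u|_g^2 - u) = 2|\nabla^{g,2} u - \tfrac{g}{2}|_g^2 + 2\Ric(g)(\nabla^g u, \nabla^g u)$, and cutoff integration by parts. However, you explicitly leave unproved the technical heart of the result, namely the identification of the elliptic error terms with the heat-kernel entropy gap $|\mathcal{W}^\delta_{r^2}(x) - \mathcal{W}^\delta_{2r^2}(x)|$. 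There is no generic ``elliptic analogue'' of the $\mathcal{W}$-entropy monotonicity to invoke here: in Cheeger--Jiang--Naber this step goes through a direct computation identifying the scale-derivative of $\mathcal{W}^\delta_s(x)$ with a heat-kernel-weighted integral of precisely the Hessian defect $|\nabla^{g,2}u - \tfrac{g}{2}|_g^2$, using the parabolic approximation machinery of their Sections 4 and 6, and it is exactly this computation that makes the constant $C(n,v)$ independent of $\delta$ and of the entropy gap. Without reproducing that step the Hessian bound (2), as you frame it, remains an assertion rather than a proof.

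There is also a concrete flaw in your route from (2) to (3). Multiplying the Bochner identity by $|\nabla^g u|_g^2 - u$ and integrating by parts ``a second time'' does not close: $|\nabla^g u|_g^2 - u$ has no definite sign, so pairing it against the non-negative right-hand side of Bochner yields an expression with no coercivity and no control on the $L^2$ norm of the quantity you are trying to estimate. The standard way to pass from (2) to (3) is instead the pointwise identity
$$\nabla^g\bigl(|\nabla^g u|_g^2 - u\bigr) \,=\, 2\Bigl(\nabla^{g,2}u - \tfrac{g}{2}\Bigr)(\nabla^g u,\cdot),$$
which combined with the $L^\infty$ gradient bound (4) gives $\bigl|\nabla^g(|\nabla^g u|_g^2-u)\bigr|_g \leq C(n,v)\, r\,\bigl|\nabla^{g,2}u-\tfrac{g}{2}\bigr|_g$; one then squares and integrates this, applies a Neumann--Poincar\'e inequality on $B_g(x,2r)$ (available under the Ricci lower bound and non-collapsing hypothesis), and controls the mean value of $|\nabla^g u|_g^2 - u$ via the $L^\infty$ closeness (5). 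This produces the correct $r^4$ scaling and sidesteps the sign problem entirely.
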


Assume $(M^n,g)$ satisfies $\Ric(g)\geq -(n-1)\kappa$ for some $\kappa \geq 0$. Recall that the volume ratio $\mathcal{V}^\kappa_r(x)$ at a point $x\in M$ is defined as:
\begin{equation*}
\mathcal{V}^\kappa_g(x,r):=\frac{\vol_gB_g(x,r)}{\vol_{-\kappa}\mathbb{B}(r)},
\end{equation*}
where $\vol_{-\kappa}\mathbb{B}(r)$ denotes the volume of an $r$-ball $\mathbb{B}(r)$ in a simply connected space of constant curvature $-\kappa$.
 We further recall the following behavior of the local entropy.
\begin{theo}\cite[Theorem 4.21]{Che-Jia-Nab} 
\label{theo-che-jia-nab-vol-entropy}
For  $(M^n,g,p)$, and positive constants $\varepsilon$ and $v$, there exists a $\de_2=\de_2(n,v,\varepsilon)>0$ such that the following holds for all $\de \leq \de_2$. 
Assume  $\Ric(g)\geq -(n-1)\delta^2$ with $\vol_gB_g(p,r)>vr^n>0,$ for all $0<r\leq \delta^{-1}$. 

Then for all $x\in B_g(p,\delta^{-1}/2)$ and $s\leq \delta^{-2}$, the local $\mathcal{W}^{\delta}_{s}$-entropy satisfies the following:
If $s\in (0,10)$ and  $|\mathcal{V}^0_{g}(x,\sqrt{s}\delta^{-1})-\mathcal{V}^0_{g}(x,\sqrt{s}\delta)|\leq\delta$  
    then $\left|\mathcal{W}^{\delta}_{s}(x)-\log \mathcal{V}^{\delta^2}_{g}(x,\sqrt{s})\right|\leq\varepsilon.$
\end{theo}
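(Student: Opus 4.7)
The plan is to argue by compactness/contradiction, reducing the assertion to the case of a metric cone on which the identity $\mathcal{W}^{0}_{s}(x) = \log \mathcal{V}^{0}_{d}(x,\sqrt{s})$ can be verified directly from the self-similarity of the heat kernel. Suppose the conclusion fails. Then there exist $\varepsilon_0, v>0$, sequences $\delta_i \searrow 0$, manifolds $(M_i^n,g_i,p_i)$ with $\Ric(g_i)\geq -(n-1)\delta_i^2$ and $\vol_{g_i}B_{g_i}(p_i,r) > v r^n$ for $r\leq \delta_i^{-1}$, points $x_i \in B_{g_i}(p_i,\delta_i^{-1}/2)$, and scales $s_i \in (0,10)$, such that $|\mathcal{V}^0_{g_i}(x_i,\sqrt{s_i}\delta_i^{-1})-\mathcal{V}^0_{g_i}(x_i,\sqrt{s_i}\delta_i)|\leq \delta_i$ while $|\mathcal{W}^{\delta_i}_{s_i}(x_i)-\log \mathcal{V}^{\delta_i^2}_{g_i}(x_i,\sqrt{s_i})|>\varepsilon_0$.

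After the parabolic rescaling $\tilde g_i := s_i^{-1} g_i$ the test scale becomes unit, the Ricci lower bound scales to $-(n-1)s_i\delta_i^2 \to 0$, and the two volume-ratio scales become $\delta_i^{-1}\to\infty$ and $\delta_i\to 0$. By Gromov pre-compactness and Cheeger-Colding's non-collapsed theory, along a subsequence $(M_i,\tilde d_i,x_i)$ converges in the pointed Gromov-Hausdorff sense to a non-collapsed Ricci limit $(X_\infty,d_\infty,x_\infty)$. The hypothesized volume-ratio gap, combined with Colding's volume continuity and Bishop-Gromov monotonicity, forces $\mathcal{V}^0_{d_\infty}(x_\infty,r)$ to be constant on $(0,\infty)$, so by Cheeger-Colding's volume-cone-implies-metric-cone theorem, $(X_\infty,d_\infty,x_\infty)$ is a metric cone with apex $x_\infty$.

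Next I would pass both sides of the almost-equality to the limit. The right-hand side $\log \mathcal{V}^{\delta_i^2}$ converges to $\log \mathcal{V}^0_{d_\infty}(x_\infty,1)$ by volume continuity. For the local entropy, recall that $\mathcal{W}^\delta_s$ is defined by applying a Perelman-type $\mathcal{W}$-functional to a $\delta$-regularized fundamental solution of the heat equation. Using the Li--Yau/Saloff-Coste--Sturm two-sided Gaussian bounds and gradient estimates valid under Ricci lower bounds, together with the stability of heat kernels under Gromov-Hausdorff convergence (Ding, Cheeger-Colding), one extracts a limiting heat kernel on $X_\infty$ and concludes $\mathcal{W}^{\delta_i}_{s_i}(x_i) \to \mathcal{W}^{0}_{1}(x_\infty)$, where the truncation becomes vacuous in the limit. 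On the metric cone $X_\infty$, the heat kernel is homogeneous under the cone dilations and a direct calculation, mirroring the equality case in Perelman's entropy monotonicity, yields $\mathcal{W}^{0}_{1}(x_\infty) = \log \mathcal{V}^0_{d_\infty}(x_\infty,1)$, contradicting the persistence of the $\varepsilon_0$-gap.

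The main obstacle is the continuity of the local entropy under the limit. One needs sharp uniform two-sided Gaussian estimates for the heat kernel and actual convergence of the kernels along a sequence of manifolds with merely a uniform lower Ricci bound and non-collapsing. In particular, the effect of the $\delta$-truncation in $\mathcal{W}^\delta_s$ must be controlled to be $o(1)$ as $\delta\to 0$ uniformly in $i$, so that in the limit one recovers the honest Perelman entropy of the limit cone. This forms the technical heart of the argument and is precisely where the neck region analysis and quantitative heat-kernel estimates of Cheeger-Jiang-Naber enter.
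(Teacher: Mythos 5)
This theorem is not proved in the paper: it is quoted verbatim as \cite[Theorem 4.21]{Che-Jia-Nab} and used as a black box in the proof of Proposition \ref{prop-ell-reg}. There is therefore no internal argument to compare against; the only question is whether your sketch would actually deliver the statement on its own.

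Your compactness/contradiction scaffold is the right shape, and the reduction to a limit non-collapsed metric cone via Colding volume continuity and the volume-cone-implies-metric-cone theorem is sound. But as written it has two genuine gaps. First, you need $\mathcal{W}^{\delta_i}_{s_i}(x_i)\to \mathcal{W}^{0}_{1}(x_\infty)$ along the rescaled sequence, with the $\delta_i$-truncation contributing an error $o(1)$ uniformly in $i$; otherwise the $\varepsilon_0$-gap need not persist in the limit, and the contradiction does not close. You flag this yourself, but deferring it to ``the neck region analysis and quantitative heat-kernel estimates of Cheeger--Jiang--Naber'' is simply re-citing the theorem: the required sharp two-sided Gaussian bounds, gradient estimates and almost-monotonicity of $s\mapsto\mathcal{W}^{\delta}_{s}$ under Ricci lower bounds are precisely the content of Section 4 of that paper. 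Second, the identity $\mathcal{W}^{0}_{1}(x_\infty)=\log\mathcal{V}^{0}_{d_\infty}(x_\infty,1)$ on the limit cone is asserted as ``a direct calculation,'' but the limit is a metric cone with possibly singular cross-section, not a smooth manifold, so one needs the existence, self-similarity and normalisation of the heat kernel on a non-collapsed RCD cone together with a sharp log-Sobolev computation there --- again a nontrivial input that needs an argument or a reference. In short, the high-level plan is a plausible alternative to a direct quantitative proof, but every substantive estimate in your sketch is outsourced, which is exactly what the paper does by citing \cite[Theorem 4.21]{Che-Jia-Nab}.
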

\end{appendix}
\bigskip\bigskip
\newcommand{\etalchar}[1]{$^{#1}$}
\providecommand{\bysame}{\leavevmode\hbox to3em{\hrulefill}\thinspace}
\providecommand{\MR}{\relax\ifhmode\unskip\space\fi MR }
\providecommand{\MRhref}[2]{%
  \href{http://www.ams.org/mathscinet-getitem?mr=#1}{#2}
}
\providecommand{\href}[2]{#2}

\end{document}